\documentclass[reqno,12pt]{amsart}

\usepackage{amsmath, epsfig, cite}
\usepackage{amssymb}
\usepackage{amsfonts}
\usepackage{amsthm}
\usepackage{graphicx}
\usepackage{subfigure}
\usepackage{float}
\usepackage{url}
\usepackage{microtype}
\usepackage{xcolor}
\usepackage{tikz}
\usetikzlibrary{arrows.meta,calc}
\usepackage{hyperref}
\hypersetup{hidelinks,
    pdftitle={Minimal Covering Bodies and a Minkowski-Type Criterion for Lattice Coverings},
    pdfauthor={Yanlu Lian and Fei Xue}}

\newtheorem{theorem}{Theorem}[section]
\newtheorem*{theorem*}{Theorem}
\newtheorem{proposition}[theorem]{Proposition}
\newtheorem{corollary}[theorem]{Corollary}
\newtheorem{conjecture}[theorem]{Conjecture}
\newtheorem{lemma}[theorem]{Lemma}
\newtheorem{problem}[theorem]{Problem}

\theoremstyle{definition}
\newtheorem{definition}[theorem]{Definition}

\theoremstyle{remark}
\newtheorem{remark}[theorem]{Remark}

\numberwithin{equation}{section}
\allowdisplaybreaks[1]

\newcommand{\conv}{\mathrm{conv}\,}

\newcommand{\va}{{\mathbf a}}
\newcommand{\vb}{{\mathbf b}}
\newcommand{\vc}{{\mathbf c}}
\newcommand{\vd}{{\mathbf d}}
\newcommand{\ve}{{\mathbf e}}
\newcommand{\vf}{{\mathbf f}}
\newcommand{\vg}{{\mathbf g}}
\newcommand{\vh}{{\mathbf h}}
\newcommand{\vu}{{\mathbf u}}
\newcommand{\vv}{{\mathbf v}}
\newcommand{\vw}{{\mathbf w}}
\newcommand{\vx}{{\mathbf x}}
\newcommand{\vy}{{\mathbf y}}
\newcommand{\vz}{{\mathbf z}}
\newcommand{\vnull}{{\mathbf 0}}
\newcommand{\inte}{\mathrm{int}\,}

\newcommand{\bd}{\mathrm{bd}\,}

\newcommand{\Rr}{{\mathbb R}}
\newcommand{\Z}{{\mathbb Z}}
\newcommand{\vol}{\mathrm{vol}\,}

\begin{document}

\title[Minimal Covering Bodies]{Minimal Covering Bodies and a Minkowski-Type Criterion for Lattice Coverings}

\author{Yanlu Lian}
\address{School of Mathematics, Hangzhou Normal University, Hangzhou, 311121, China}
\email{yllian@hznu.edu.cn}

\author{Fei Xue}
\address{School of Mathematical Sciences, Nanjing Normal University, Nanjing, 210046, China}
\email{05429@njnu.edu.cn}

\subjclass[2020]{11H31, 52C17, 52B20, 52C22}

\keywords{lattice covering, minimal covering body, Kuhn simplex, lattice polytope, tiling, geometry of numbers.}

\thanks{This work is supported by the National Natural Science Foundation of China (NSFC12201307), the Natural Science Foundation of Zhejiang Province (LQ24A010008) and the Scientific Research Fund of Zhejiang Provincial Education Department (Y202351675).}
\date{September 11, 2026}

\begin{abstract}
We study convex bodies whose translates by a fixed lattice cover space
and for which every proper convex subbody loses this property. In three
dimensions, the convex hull of independent translates of the six Kuhn
tetrahedra always gives a lattice covering. We give a geometric proof
using auxiliary tetrahedra and the parity of the covering multiplicity.
We also prove polytopality and boundary restrictions for minimal covering
bodies, and show that every three-dimensional parallelohedron admits a
Kuhn representation with respect to its face-to-face tiling lattice.
Constructions from Reeve tetrahedra give non-symmetric minimal covering
bodies with eight vertices in dimension three and centrally symmetric ones with
sixteen vertices in dimension four, with unbounded volumes for the
integer covering lattice. They have pairwise distinct arithmetic contact
types, which record lattice contacts and the faces containing them.
The covering theorem also gives
a finite intersection criterion for a prescribed lattice basis, with
three intersection tests in the centrally symmetric case.
\end{abstract}

\maketitle

\section{Introduction}

Lattice coverings are a basic subject in the geometry of numbers. Let
$\mathcal K^n$ be the family of compact convex subsets of $\mathbb R^n$
with nonempty interior. For $K\in\mathcal K^n$ and a full-rank lattice
$\Lambda\subset\mathbb R^n$, the translates $K+\Lambda$ form a
\textbf{lattice covering} if $K+\Lambda=\mathbb R^n$. The optimal lattice
covering density is
\[
 \theta^l(K)=\inf\left\{\frac{\vol(K)}{\det\Lambda}:
                         K+\Lambda=\mathbb R^n\right\},
\]
where $\det\Lambda$ is the volume of a fundamental parallelepiped of
$\Lambda$. For $n\ge3$, general upper bounds include Rogers' estimate
$\theta^l(K)\le n^{\log_2\log_e n+c}$~\cite{Rogers1959} and the bound
$\theta^l(K)\le cn^2$ of Ordentlich, Regev and Weiss~\cite{Ordentlich},
with absolute constants. Exact values and the structure of the relevant
coverings remain difficult even for familiar three-dimensional bodies;
see \cite{Brass2005,Conway2003,Gruber1987,Rogers1964}.

For lattice packings, Minkowski's three-dimensional
criterion~\cite{Minkowski1904} gives a small list of possible contacts
between a convex body and a critical lattice. Betke and Henk
\cite{Betke2000} used this structure to determine the densest lattice
packings of three-dimensional polytopes. This motivates the search for
finite geometric conditions for lattice coverings. Our criterion tests
whether a body contains six prescribed simplex shapes associated with
one lattice basis. It is a sufficient condition; the classification
problem below asks which minimal covering bodies admit such a
representation. The lattice covering
of a tetrahedron with density $125/63$~\cite{Dougherty,Forcade2000}
provides one motivation for understanding such conditions.

A natural starting point is the theory of parallelohedra, that is,
convex polytopes which tile space by lattice translations. Their relation
to Dirichlet--Voronoi cells is the subject of Voronoi's
conjecture~\cite{Voronoi1908}; see \cite{Gruber1987,Zong2006,Zong2014}
for background. In the plane, the lattice covering problem is governed
by inscribed centrally symmetric hexagons~\cite{Dowker,Fary1950,Fejes1950}.
In higher dimensions, an inscribed lattice-covering body need not be a
parallelohedron or centrally symmetric, as shown by Xue and
Zong~\cite{Xue2017}.

These observations lead us to \textbf{minimal covering bodies}. For a
fixed lattice $\Lambda$, a convex body $M$ is minimal if
$M+\Lambda=\mathbb R^n$ and no proper convex subbody of $M$ has this
property. Thus minimality concerns inclusion with the lattice fixed;
it does not mean least volume or optimal covering density. Codenotti,
Freyer and Krivoku\'ca~\cite{Codenotti} independently studied the same
notion and established polytopality. We give a direct proof of
polytopality and indicate the corresponding results in their work.

Our first result is a sufficient condition for covering. The six
tetrahedra $\Delta_1,\ldots,\Delta_6$ in the standard Kuhn triangulation
of $[0,1]^3$ are listed in Section~\ref{sec:Kuhn simplices}.

\begin{theorem}\label{thm:kuhn_cover}
Let
\[
 K=\conv\bigcup_{i=1}^6(\Delta_i+\vx_i),
 \qquad \vx_i\in\mathbb R^3.
\]
Then $K+\mathbb Z^3=\mathbb R^3$.
\end{theorem}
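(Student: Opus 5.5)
The plan is a degree argument on the torus $T=\mathbb R^3/\mathbb Z^3$. The six Kuhn tetrahedra $\Delta_1,\ldots,\Delta_6$ of Section~\ref{sec:Kuhn simplices}, together with their $\mathbb Z^3$-translates, form a periodic triangulation $\mathcal T$ of $\mathbb R^3$, and hence a triangulation of $T$. I will build a closed pseudomanifold $X$ and a continuous map $H\colon X\to\mathbb R^3$ with the following properties. Its image lies in $K+\mathbb Z^3$. It is $\mathbb Z^3$-equivariant, so it descends to $\bar H\colon \bar X\to T$. And $\bar H$ has mod-$2$ degree $1$, which gives the ``parity of covering multiplicity''. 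Then every point of $T$ has an odd, hence nonzero, number of preimages counted mod $2$, and $K+\mathbb Z^3=\mathbb R^3$ follows. A naive map $p\mapsto p+\sum_i\lambda_i(p)\vx_i$ does not work. The weight $\lambda_i$ must vanish off $\Delta_i+\mathbb Z^3$ and equal $1$ at interior points, which is incompatible with continuity across a common facet. So the domain has to be ``blown up'' along the lower-dimensional cells, and the auxiliary tetrahedra live there.

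\textbf{Step 1 (blow-up).} For each cell $\sigma$ of $\mathcal T$, let $I(\sigma)$ be the set of indices $i$ such that some translate $\Delta_i+\vz$ of $\Delta_i$ contains $\sigma$. Let $S(\sigma)$ be the abstract simplex on $I(\sigma)$. Put
\[
X=\bigcup_{\sigma\in\mathcal T}\sigma\times S(\sigma),\qquad H(p,\lambda)=p+\sum_{i\in I(\sigma)}\lambda_i\vx_i .
\]
The gluing is the natural one: if $\tau\subset\sigma$ then $I(\tau)\supset I(\sigma)$, so $S(\sigma)$ is a face of $S(\tau)$. I expect $X$ to be, up to subdivision, the standard ``cell times dual cell'' decomposition, i.e.\ a thickening of $\mathbb R^3$. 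Its top-dimensional pieces are of three kinds: $\Delta_i\times\mathrm{pt}$; prisms $F\times[\vx_i,\vx_j]$ over facets $F$; and pieces $e\times\conv\{\vx_i\}$ and $\vv\times\conv\{\vx_1,\dots,\vx_6\}$ over edges $e$ and vertices $\vv$. These pieces are triangulated into the auxiliary tetrahedra. To see that the image lies in $K+\mathbb Z^3$, take $(p,\lambda)$ and write $p$ as lying in $\Delta_i+\vz_i$ for each $i\in I(\sigma)$. Then
\[
H(p,\lambda)=\sum\lambda_i\,(p+\vx_i)\in\conv\bigcup_i(\Delta_i+\vx_i+\vz_i).
\]
This set lies in $K+\vz$ only if all the $\vz_i$ are equal. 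So I will work with the fixed lift: each cell $\sigma$ meeting $[0,1)^3$ is in the closed cube $[0,1]^3$, and the relevant $\Delta_i$ are the Kuhn tetrahedra of that same cube. This is exactly where the Kuhn structure is used. Every vertex of $\mathcal T$ lies in all six tetrahedra of each cube containing it, and each cell of the Kuhn triangulation is contained in tetrahedra of a single cube. I will check this carefully and, if needed, replace $I(\sigma)$ by the set of $\Delta_i$ of one adjacent cube containing $\sigma$.

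\textbf{Step 2 (degree).} The straight-line homotopy $H_t(p,\lambda)=p+t\sum\lambda_i\vx_i$, $t\in[0,1]$, is equivariant. At $t=0$, $H_0$ is the collapse $X\to\mathbb R^3$, $(p,\lambda)\mapsto p$, which on the quotient has mod-$2$ degree $1$: the preimage of a generic point is a single point in some $\Delta_i\times\mathrm{pt}$. Degree is homotopy invariant, so $\bar H_1$ also has degree $1$ mod $2$, and hence $\bar H_1$ is surjective. Equivalently, in chain language, the sum of all auxiliary tetrahedra is a mod-$2$ cycle in $K$ whose projection to $T$ is the fundamental class. A generic point is therefore covered an odd number of times, and by compactness every point is covered.

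\textbf{Main obstacle.} The hard step is Step 1. I need to verify that $\bar X$ is genuinely a closed pseudomanifold, meaning that each $2$-face of the auxiliary triangulation lies in an even number of top pieces, and that all pieces indexed by one cell use translates from a common cube, so that the image is inside a single copy of $K$ rather than merely inside $K+\mathbb Z^3$. For the first point I would work directly with the chain: compute the mod-$2$ boundaries of $\Delta_i$, of the prisms $F\times[\vx_i,\vx_j]$, and of the pieces $e\times\conv\{\vx_i\}$ and $\vv\times\conv\{\vx_i\}$, and check that they cancel. This uses that the link of each edge in the Kuhn triangulation is a cycle of tetrahedra, which I would enumerate for the finitely many edge types: axis edges, face diagonals, and the main diagonal.
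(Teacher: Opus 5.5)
Your Step~2 is the right mechanism, and it is essentially the paper's. You deform a mod-$2$ cycle of pieces, each inside one lattice translate of $K$, along $t\vx_i$, starting from a configuration of multiplicity one; the paper does this via Lemma~\ref{lem:periodic-face-pairing}. The gap is Step~1, which carries all the content.

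First, $X$ as defined is not three-dimensional. Every lattice point $\vv$ is the corner $\va$ of the cube $\vv+[0,1]^3$, so $I(\vv)=\{1,\dots,6\}$ and $\{\vv\}\times S(\vv)$ is a $5$-simplex. Axis edges and cube diagonals also carry all six labels, giving six-dimensional pieces. So $\bar H$ has no mod-$2$ degree. A genuine ``cell times dual cell'' thickening would use the dual cell: its vertices are the $24$ tetrahedra at $\vv$, the six around an axis edge, and so on, not a simplex on the six labels.

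More seriously, single-translate containment fails at cube walls, already over facets. The facet $F=[\va,\vb,\vc]$ lies only in $\Delta_1$ and $\Delta_3-\mathbf e_3$. Its prism therefore interpolates between $p+\vx_1\in K$ and $p+\vx_3\in K-\mathbf e_3$. Take $p=(s,0,0)$ with $\tfrac12<s<1$. No $\vz\in\mathbb Z^3$ satisfies $p-\vz\in\tfrac12\Delta_1+\tfrac12\Delta_3$ (check the four candidates $\vz\in\{0\}\times\{0,-1\}^2$). So your convexity argument places $p+\tfrac12(\vx_1+\vx_3)$ in no translate of $K$.

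The repair you propose rests on false claims:
\begin{itemize}
\item The point $\vb$ lies only in $\Delta_1,\Delta_2$ of $[0,1]^3$, not in all six tetrahedra of that cube.
\item The six tetrahedra around the edge $[\va,\vb]$ are $\Delta_1,\Delta_2,\Delta_6-\mathbf e_2,\Delta_3-\mathbf e_3,\Delta_4-\mathbf e_2-\mathbf e_3,\Delta_5-\mathbf e_2-\mathbf e_3$. No cube supplies more than two of them.
\item Restricting $I(\sigma)$ to one cube deletes the prism over $F$, because only cells containing $F$ contribute pieces over its relative interior. The faces $F\times\{1\}$ and $F\times\{3\}$, with images $F+\vx_1\ne F+\vx_3$, are then left unpaired.
\end{itemize}
Within your product construction with $H(p,\lambda)=p+\sum\lambda_i\vx_i$, closure and single-translate containment therefore cannot both hold.

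The missing idea is a different cycle, not a blow-up of the Kuhn triangulation. It is built from the points $\mathbf p+\vx_i$ with $\mathbf p\in\mathbb Z^3$, chosen so that the four vertices of each piece lie in one common translate $K+\vz$. The paper uses $28$ such tetrahedra ($D$, $O_k$, $I_k$, $J_{ik}$) and pairs their $112$ faces, up to lattice translation, as identities of vertex labels. At zero translation exactly six of them are nondegenerate, and they form the cube triangulation obtained from Kuhn's by two diagonal flips. Balitskiy's lemma gives an alternative route.
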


The translations are independent: no agreement between the positions
of different tetrahedra is required. We construct auxiliary tetrahedra
contained in lattice translates of their convex hull. Pairing their
triangular faces shows that the covering multiplicity has constant odd
parity, which proves the covering assertion. The conclusion also follows from Balitskiy's
triangulation lemma~\cite[Lemma~5.3]{Balitskiy}, which applies in every
dimension. We state this connection at the end of
Section~\ref{sec:Kuhn simplices}.

For classification, the lattice contacts contain information that the
ordinary face lattice does not record. Definition~\ref{def:type}
introduces the \textbf{arithmetic contact type}: it records the lattice
contacts rooted at the vertices of the body, together with the faces
containing those contacts, under one common lattice isomorphism.
This definition depends on the body and its lattice, rather than on a
chosen decomposition into translated polytopes.

Let $m$ be a positive integer and put
\[
 T_m=\conv\{(0,0,0),(1,0,0),(0,1,0),(1,1,m)\},
 \qquad
 \widetilde T_m=T_m+[\mathbf0,\ve_3],
\]
where $\ve_3=(0,0,1)$ and the sum is a Minkowski sum.

\begin{theorem}\label{thm:infinite_asym}
For every positive integer $m$, $\widetilde T_m$ is a non-centrally
symmetric minimal covering body for $\mathbb Z^3$.
The bodies $\widetilde T_m$ have pairwise distinct arithmetic contact types.
\end{theorem}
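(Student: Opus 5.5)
The plan has three parts: the covering property by a fibre argument along $\ve_3$, minimality by the same fibres, and then non-symmetry and pairwise distinctness of the contact types from explicit vertex data.

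\emph{Covering.} Let $\pi$ be the orthogonal projection onto the $x_1x_2$-plane. Since $\pi(T_m)=\conv\{(0,0),(1,0),(0,1),(1,1)\}=[0,1]^2$, we also have $\pi(\widetilde T_m)=[0,1]^2$. For $(a,b)\in[0,1]^2$ write $T_m(a,b)$ and $\widetilde T_m(a,b)$ for the vertical fibres over $(a,b)$. Because $\widetilde T_m=T_m+[\mathbf 0,\ve_3]$, we get $\widetilde T_m(a,b)=T_m(a,b)+[0,1]$, which is a closed interval of length $1+\ell(a,b)\ge1$.

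The edges $[(0,0,0),(1,1,m)]$ and $[(1,0,0),(0,1,0)]$ project onto the two diagonals of the square. From this, $\ell(a,b)$ is the piecewise linear function equal to $m/2$ at $(1/2,1/2)$ and to $0$ at the four corners. Now $[0,1]^2+\Z^2=\Rr^2$, and every vertical fibre of $\widetilde T_m$ has length at least $1$, so its $\ve_3\Z$-translates cover the whole vertical line. Hence $\widetilde T_m+\Z^3=\Rr^3$.

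\emph{Minimality.} Let $C\subseteq\widetilde T_m$ be convex with $C+\Z^3=\Rr^3$; we show $C=\widetilde T_m$. A proper convex subset of a polytope misses some vertex, so it suffices to show that $C$ contains all eight vertices: $(0,0,0)$, $(1,0,0)$, $(0,1,0)$, $(1,1,m)$ and their translates by $\ve_3$.

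Take $(a,b)$ in the open square $(0,1)^2$. The only translates $\widetilde T_m+\vz$ whose projection contains $(a,b)$ are those with $\vz\in\ve_3\Z$, since the translated squares have disjoint interiors. Therefore the fibre $C(a,b)$ must cover $\Rr$ modulo $1$, so it is an interval of length at least $1$ inside $\widetilde T_m(a,b)$. As $(a,b)$ tends to a corner $\vc$, the length of $\widetilde T_m(a,b)$ tends to $1$. Since $C$ is closed, the fibre $C(\vc)$ contains a limit interval of length $1$ inside $\widetilde T_m(\vc)$, which itself has length exactly $1$. So $C(\vc)=\widetilde T_m(\vc)$ for each of the four corners. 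These four fibres are exactly the eight vertex pairs, so $C\supseteq\widetilde T_m$.

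\emph{Non-symmetry and distinct types.} Central symmetry fails by inspecting the vertex set. The four vertical edges over the corners have bottoms at heights $0,0,0,m$. Reflection through the centroid $(1/2,1/2,(m+2)/4\cdot\ldots)$ would have to swap the corners $(0,0)\leftrightarrow(1,1)$ and $(1,0)\leftrightarrow(0,1)$. It would therefore send the vertical edge over $(1,0)$ to the one over $(0,1)$ reversed, which forces heights incompatible with those of $(0,0)$ and $(1,1)$ when $m\ge1$. One can also simply note that the midpoints of the four vertical edges are not symmetric about a common point.

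For distinctness I will extract from the arithmetic contact type (Definition~\ref{def:type}) an invariant depending on $m$. The natural candidates are as follows.
\begin{itemize}
\item[(i)] The number of vertex pairs differing by a lattice vector, together with those vectors. These include $\ve_3$ four times, and also vectors such as $(1,1,m)$ and $(1,1,m\pm1)$. The unimodular invariants of this configuration, for instance the index $m$ of the sublattice generated by $(1,0,0)$, $(0,1,0)$ and $(1,1,m)$ (equivalently $6\vol T_m$), differ for different $m$.
\item[(ii)] If the type determines the body up to lattice isomorphism, then simply $\vol\widetilde T_m=1+m/6$, computed from the fibre lengths.
\end{itemize}

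I expect this last step to be the main obstacle. It requires checking precisely which data Definition~\ref{def:type} records and showing that a quantity like $m$ is recoverable from the recorded contacts and faces under a common lattice isomorphism. I would first try recovering $m$ as the lattice determinant of three contact vectors rooted at a common vertex.
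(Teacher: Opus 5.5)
Your covering, minimality and non-symmetry steps are correct and follow the paper's fibre argument. For minimality you take limits of fibres of $C$ of length at least one instead of exhibiting the exclusive points \eqref{eq:reeve-exclusive-points}, which is an equally valid variant. Your centroid expression is garbled. The clean version: a centre $(1/2,1/2,c)$ would need $c=1/2$ from the corners $(1,0),(0,1)$, and $c=(m+1)/2$ from $(0,0),(1,1)$.

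The genuine gap is the final assertion. You do not establish any invariant of the arithmetic contact type that separates different $m$.

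\emph{Why (i) and (ii) do not yet work.} Candidate (i) is not intrinsic as stated. Nothing in the profile singles out the three vectors $(1,0,0),(0,1,0),(1,1,m)$. The lattice generated by the whole contact set at $\mathbf 0$ is $\Z^3$, since it contains $\ve_1,\ve_2,\ve_3$. The number of vertex pairs differing by a lattice vector is $28$ for every $m$. A canonical version, such as the largest $|\det|$ over triples of contacts, would need the full contact set and a further computation. Candidate (ii) is left conditional.

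\emph{The missing step is short.} By \eqref{eq:contact-type-equivalence}, a profile isomorphism from $(\widetilde T_m,\Z^3)$ to $(\widetilde T_{m'},\Z^3)$ supplies one $U\in\mathrm{GL}_3(\mathbb Z)$ with $U\mathcal Z_{\widetilde T_m}(\vv)=\mathcal Z_{\widetilde T_{m'}}(\phi(\vv))$. Hence $U$ maps the root contact polytope $P_{\vv}$ onto the root contact polytope of $\widetilde T_{m'}$ at $\phi(\vv)$, and $|\det U|=1$ makes their volumes equal. All eight vertices of $\widetilde T_m$ are integral, so $\vw-\vv\in\mathcal Z_{\widetilde T_m}(\vv)$ for every vertex $\vw$. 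Therefore $\widetilde T_m-\vv\subseteq P_{\vv}\subseteq\widetilde T_m-\vv$, and $\vol P_{\vv}=\vol\widetilde T_m=1+m/6$, which your fibre lengths already give. This is injective in $m$, and it is the paper's argument. The paper invokes $\widetilde T_m\cap\Z^3=\{\vv_1,\ldots,\vv_8\}$, but integrality of the vertices suffices.

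Alternatively, (ii) becomes unconditional via the single-coset remark after Definition~\ref{def:type}. Applying \eqref{eq:contact-carrier-equivalence} with $\vz=\vw-\vv_*$ forces $\phi(\vw)=\phi(\vv_*)+U(\vw-\vv_*)$. So the isomorphism comes from a lattice-affine map carrying $\widetilde T_m$ onto $\widetilde T_{m'}$, which preserves volume.
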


The corresponding centrally symmetric bodies in dimension four are
\[
 \widetilde T_m^\bullet=
 \conv\bigl((\widetilde T_m\times\{0\})
              \cup(-\widetilde T_m\times\{1\})\bigr).
\]

\begin{theorem}\label{thm:infinite_sym}
For every positive integer $m$, $\widetilde T_m^\bullet$ is a centrally
symmetric minimal covering body for $\mathbb Z^4$.
The bodies $\widetilde T_m^\bullet$ have pairwise distinct arithmetic
contact types.
\end{theorem}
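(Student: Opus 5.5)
We treat the three parts of the statement in turn: covering, minimality, and distinctness of contact types. Central symmetry is immediate. The map $(\vx,t)\mapsto(-\vx,1-t)$ exchanges $\widetilde T_m\times\{0\}$ and $-\widetilde T_m\times\{1\}$, so $\widetilde T_m^\bullet$ is symmetric about $(\vnull,\tfrac12)$.

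\emph{Covering.} For each height $t\in[0,1]$, the slice of $\widetilde T_m^\bullet$ at $x_4=t$ contains $(1-t)\widetilde T_m+t(-\widetilde T_m)$. Covering $\Rr^4$ by $\widetilde T_m^\bullet+\Z^4$ reduces to showing that each of these slices covers $\Rr^3$ modulo $\Z^3$. At the endpoints $t=0$ and $t=1$ this is Theorem~\ref{thm:infinite_asym}. For intermediate $t$ we use a Kuhn-type representation. Inside $\widetilde T_m$ we find six translates of the Kuhn tetrahedra for some lattice basis of $\Z^3$ (after a unimodular change of coordinates), as in the three-dimensional proof. The same tetrahedra, reflected, lie in $-\widetilde T_m$. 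Because Kuhn tetrahedra are centrally symmetric as a family under $\Delta\mapsto-\Delta$ up to translation and relabeling, the combination $(1-t)(\Delta_i+\vx_i)+t(-\Delta_{\sigma(i)}+\vy_i)$ contains a translate of $\Delta_i$ whenever $-\Delta_{\sigma(i)}$ is a translate of $\Delta_i$. This holds for $\Delta\mapsto-\Delta$ on the Kuhn family, since $-\Delta_\pi$ equals $\Delta_{\pi'}$ translated, with $\pi'$ the reversed permutation. We would then check that the pairing works, so that the slice contains six Kuhn translates. Theorem~\ref{thm:kuhn_cover} then gives covering for every slice.

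If that pairing fails, the fallback is Balitskiy's lemma in dimension four. In that case we would exhibit the 24 Kuhn $4$-simplices directly, as convex hulls of a vertex chain split between the bottom copy and the top copy.

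\emph{Minimality.} Minimality is the main obstacle. By the polytopality and boundary results stated earlier (minimal bodies are polytopes, and the relevant restriction is used here as a criterion), it suffices to show two things for each of the 16 vertices $\vv$. First, $\vv$ cannot be cut off: there is a point $\vz$ such that $\vv$ is the unique point of $\widetilde T_m^\bullet$ in the class $\vz+\Z^4$ near $\vv$. Equivalently, every translate $\widetilde T_m^\bullet+\vu$ with $\vu\in\Z^4\setminus\{\vnull\}$ misses a neighbourhood of $\vv$ intersected with some open region. Second, any proper convex subbody misses some vertex, and removing a small cap at that vertex exposes an uncovered point.

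Concretely, for a bottom vertex $(\vw,0)$ with $\vw$ a vertex of $\widetilde T_m$, we take the witness point from the three-dimensional proof, a point near $\vw$ covered only by $\widetilde T_m$ and only near $\vw$. We then push it slightly into $x_4>0$ or $x_4<0$. The points in its $\Z^4$-class that lie in $\widetilde T_m^\bullet$ must have $x_4\in[0,1]$, which forces them into the slice near $t=0$. The three-dimensional uniqueness then transfers, provided that slices near $t=0$ are small perturbations of $\widetilde T_m$. Top vertices follow by the central symmetry. The delicate point is the lattice vectors with fourth coordinate $\pm1$. We would first try to show explicitly that $-\widetilde T_m$ meets the witness class only far from it, using the coordinates of $T_m$, which have height $m$ along $\ve_3$.

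\emph{Distinct contact types.} The lattice contacts at vertices include vectors such as $(1,1,m,0)$, or differences of vertices across $x_4$. The faces containing these contacts involve the edge directions $(1,1,m)$ and $\ve_3$. An invariant such as the maximal index, or the $\gcd$ structure, of the sublattice generated by contact vectors at a vertex together with its facets depends on $m$. We would compute one such quantity, for instance the lattice width of $T_m$ in the direction of the contact, or the normalized volume $m$ of a contact tetrahedron. That quantity is preserved under the common lattice isomorphism, so the types are pairwise distinct.
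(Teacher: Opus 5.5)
\textbf{Covering.} The central symmetry and the reduction to slices are fine. The gap is that your covering argument rests on the claim that $\widetilde T_m$ contains a complete translated Kuhn family for some basis of $\Z^3$. The paper's three-dimensional proof uses no such family, and for $m\ge4$ the claim is false.

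\emph{Why the claim fails.} Take $m\ge2$ and a translate of a unimodular simplex inside $\widetilde T_m$.
\begin{itemize}
\item Its horizontal translation must be integral, since otherwise all vertices share an $x$- or $y$-coordinate and the simplex is flat.
\item A non-integral vertical translation would put one vertex over each corner of $[0,1]^2$ at equal fractional heights. That simplex has volume $m/6\ne 1/6$.
\item So every Kuhn simplex in $\widetilde T_m$ is a lattice simplex on the eight vertices.
\end{itemize}
Suppose such a simplex has one vertex over each corner, say $(0,0,a),(1,0,b),(0,1,c),(1,1,m+d)$ with $a,b,c,d\in\{0,1\}$. Its volume is $|m+a+d-b-c|/6\ge(m-2)/6$. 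Hence for $m\ge4$ each of the six simplices has an edge $\pm\ve_3$, which forces $\pm\ve_3\in\{\vb_1,\vb_2,\vb_3,\mathbf s\}$. For $m\ge3$, the third coordinate of an edge vector determines whether that edge enters, leaves, or avoids the fibre over $(1,1)$. Recording this for the two orders of the remaining basis vectors, or along each path when $\mathbf s=\pm\ve_3$, gives a contradiction once some simplex reaches that corner. Minimality forces this, since the hull of the six simplices must be all of $\widetilde T_m$.

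\emph{The fallback fails too.} In dimension four, the fourth coordinates of the basis must be $(0,0,0,\pm1)$ or $(0,0,1,-1)$ up to order. Then six of the $24$ simplices have four vertices in one layer forming a complete three-dimensional Kuhn family in $\pm\widetilde T_m$, which we just excluded.

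\emph{Missing idea.} Use the fibre argument. Write
\[
K_\lambda=\bigl((1-\lambda)T_m+\lambda(-T_m)\bigr)+[-\lambda,1-\lambda]\ve_3 .
\]
This projects onto $[-\lambda,1-\lambda]^2$, which tiles by $\Z^2$, and every vertical fibre contains a unit segment. So $K_\lambda+\Z^3=\Rr^3$ for every $\lambda\in[0,1]$.

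\textbf{Minimality and contact types.} These parts are incomplete rather than wrong.

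Polytopality and the boundary lemmas are necessary conditions, so they cannot serve as a minimality criterion. Your witness-point plan can work, but its key step is the assertion that slices near $t=0$ are small perturbations of $\widetilde T_m$. You must actually show that the three-dimensional exclusive points lie in $K_\eta$ and remain exclusive there; that needs a continuity argument for fibres over interior points. Lattice vectors with fourth coordinate $\pm1$ are not delicate: a point with $0<x_4<1$ lies only in translates with $u_4=0$.

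The paper uses that observation globally:
\begin{itemize}
\item For a covering $K'\subseteq\widetilde T_m^\bullet$, every slice $K'_\lambda$ with $0<\lambda<1$ covers $\Rr^3$.
\item A compactness limit gives the same for $K'_0$ and $K'_1$.
\item Minimality of $\widetilde T_m$ then forces $K'_0=\widetilde T_m$ and $K'_1=-\widetilde T_m$.
\end{itemize}

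For the contact types you fix no invariant. The most natural of your candidates, the lattice generated by the contacts at a vertex, is $\Z^4$ for every $m$. What is needed is that all sixteen vertices are lattice points. Then $P_{\vv}=\widetilde T_m^\bullet-\vv$, whose volume $1+m/3$ is preserved by the unimodular map $U$.
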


The two families have eight and sixteen vertices, respectively, and
the volume computations in Sections~\ref{sec:3-dim asy MBs}
and~\ref{sec:four-dimensional CSM} give
\[
 \vol_3(\widetilde T_m)=1+\frac m6,
 \qquad \vol_4(\widetilde T_m^\bullet)=1+\frac m3.
\]
Thus inclusion-minimality imposes no upper bound on
$\vol(K)/\det\Lambda$, even with the dimension and the number of
vertices fixed, and even under central symmetry in dimension four.
These densities use the specified covering lattices
$\mathbb Z^3$ and $\mathbb Z^4$, whereas $\theta^l(K)$ takes the
infimum over all covering lattices.
The infinitude in Theorems~\ref{thm:infinite_asym}
and~\ref{thm:infinite_sym} concerns arithmetic contact types.
The five types of three-dimensional parallelohedra~\cite{Federov},
by contrast, are ordinary combinatorial types.

Theorem~\ref{thm:kuhn_cover} suggests the following classification
problem. For a lattice basis $B=(\mathbf b_1,\mathbf b_2,\mathbf b_3)$
and a permutation $\sigma\in S_3$, write
\[
 \Delta_\sigma(B)=\conv\{\mathbf0,\mathbf b_{\sigma(1)},
 \mathbf b_{\sigma(1)}+\mathbf b_{\sigma(2)},
 \mathbf b_1+\mathbf b_2+\mathbf b_3\}.
\]
\begin{problem}\label{conj:sym_classification}
Characterize the centrally symmetric minimal covering bodies $K$ for
lattices $\Lambda\subset\mathbb R^3$ that admit a representation
\[
 K=\conv\bigcup_{\sigma\in S_3}
       \bigl(\Delta_\sigma(B)+\mathbf t_\sigma\bigr)
\]
for some basis $B$ of $\Lambda$ and vectors
$\mathbf t_\sigma\in\mathbb R^3$.
\end{problem}

The parallelohedra provide a natural class for this question.
In Section~\ref{sec:definition} we prove that every three-dimensional
parallelohedron has such a representation for its face-to-face tiling
lattice. The six simplices use one common lattice basis.
For a prescribed basis, Section~\ref{sec:discussion} expresses their
containment by six intersections of translates of $K$, reduced to
three intersections when $K$ is centrally symmetric.

The paper is organized as follows. Section~\ref{sec:Kuhn simplices}
proves the Kuhn covering theorem. Section~\ref{sec:definition} proves
polytopality, defines arithmetic contact type, and establishes boundary
restrictions. Sections~\ref{sec:3-dim asy MBs}
and~\ref{sec:four-dimensional CSM} treat the three- and four-dimensional constructions.
Section~\ref{sec:discussion} gives the finite intersection criterion
and its centrally symmetric specialization.

\section{Convex Hull of Kuhn Simplices}\label{sec:Kuhn simplices}

We prove Theorem~\ref{thm:kuhn_cover} by constructing tetrahedra contained
in lattice translates of $K$. Their triangular faces occur in pairs.
This pairing allows us to compare the covering with the initial
subdivision of the unit cube by counting modulo two.

Label the vertices of $[0,1]^3$ by
\begin{equation}\label{eq:kuhn-cube-vertices}
\begin{split}
 &\va=(0,0,0),\quad \vb=(1,0,0),\quad
 \vc=(1,1,0),\quad \vd=(0,1,0),\\
 &\ve=(0,0,1),\quad \vf=(1,0,1),\quad
 \vg=(1,1,1),\quad \vh=(0,1,1).
\end{split}
\end{equation}
The six \textbf{Kuhn simplices}~\cite{Kuhn} are
\begin{equation}\label{eq:kuhn-six-simplices}
\begin{split}
 &\Delta_1=\conv\{\va,\vb,\vc,\vg\},\quad
  \Delta_2=\conv\{\va,\vb,\vf,\vg\},\\
 &\Delta_3=\conv\{\va,\ve,\vf,\vg\},\quad
  \Delta_4=\conv\{\va,\ve,\vh,\vg\},\\
 &\Delta_5=\conv\{\va,\vd,\vh,\vg\},\quad
  \Delta_6=\conv\{\va,\vd,\vc,\vg\}.
\end{split}
\end{equation}

We first recall the corresponding planar construction.
\begin{proposition}[Two-dimensional triangulation principle]
\label{prop:two dimensional_tri}
For $\mathbf x,\mathbf y,\mathbf z,\mathbf u\in\mathbb R^2$,
\[
 \conv\{\mathbf x,\mathbf y,\mathbf z\}\subseteq
 \conv\{\mathbf u,\mathbf y,\mathbf z\}\cup
 \conv\{\mathbf x,\mathbf u,\mathbf z\}\cup
 \conv\{\mathbf x,\mathbf y,\mathbf u\}.
\]
\end{proposition}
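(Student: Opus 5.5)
We argue with barycentric coordinates relative to the triangle $\conv\{\mathbf x,\mathbf y,\mathbf z\}$, after disposing of the degenerate case. If $\mathbf x,\mathbf y,\mathbf z$ are collinear, $\conv\{\mathbf x,\mathbf y,\mathbf z\}$ is a segment with endpoints among the three points. Each point of it lies on a segment joining two of $\mathbf x,\mathbf y,\mathbf z$, and each such segment lies in one of the three triangles on the right-hand side: $[\mathbf y,\mathbf z]\subseteq\conv\{\mathbf u,\mathbf y,\mathbf z\}$, $[\mathbf x,\mathbf z]\subseteq\conv\{\mathbf x,\mathbf u,\mathbf z\}$, and $[\mathbf x,\mathbf y]\subseteq\conv\{\mathbf x,\mathbf y,\mathbf u\}$. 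So the inclusion holds trivially, and we may assume $\mathbf x,\mathbf y,\mathbf z$ are affinely independent.

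Let $\mathbf p\in\conv\{\mathbf x,\mathbf y,\mathbf z\}$ and write $\mathbf p=\alpha\mathbf x+\beta\mathbf y+\gamma\mathbf z$ with $\alpha,\beta,\gamma\ge0$ summing to $1$. Write also $\mathbf u=a\mathbf x+b\mathbf y+c\mathbf z$ with $a+b+c=1$; these coefficients may be negative. For $t\ge0$ the affine combination
\[
\mathbf p=t\mathbf u+(\alpha-ta)\mathbf x+(\beta-tb)\mathbf y+(\gamma-tc)\mathbf z
\]
has coefficients summing to $1$.

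The core step is to choose $t$ so that one of the three coefficients of $\mathbf x,\mathbf y,\mathbf z$ vanishes while the other two stay nonnegative. If $a,b,c\le 0$, then since they sum to $1$ this cannot happen, so at least one of them is positive. Set
\[
t=\min\{\alpha/a,\ \beta/b,\ \gamma/c\},
\]
the minimum taken over the positive ones among $a,b,c$. Then $t\ge0$. By the choice of $t$, every coefficient with a positive denominator stays nonnegative and one of them becomes zero. Coefficients with nonpositive $a$, $b$ or $c$ only increase as $t$ grows, so they stay nonnegative as well.

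The coefficient of $\mathbf u$ is $t\ge0$, and all coefficients sum to $1$. Hence $\mathbf p$ is a convex combination of $\mathbf u$ and two of the points $\mathbf x,\mathbf y,\mathbf z$. If the vanishing coefficient is that of $\mathbf x$, then $\mathbf p\in\conv\{\mathbf u,\mathbf y,\mathbf z\}$, and the cases for $\mathbf y$ and $\mathbf z$ give the other two triangles. This proves the inclusion.

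The only delicate point is to handle arbitrary $\mathbf u$, including $\mathbf u$ outside the triangle, where some of $a,b,c$ are negative. The monotonicity remark above covers this uniformly, so no case split on the position of $\mathbf u$ is needed. Alternatively, one could view the statement as a planar Carathéodory/Radon-type fact: $\mathbf p$ lies on the ray from $\mathbf u$ through $\mathbf p$, which exits the triangle through some edge, and the triangle formed by $\mathbf u$ and that edge contains $\mathbf p$. The barycentric argument makes this precise.
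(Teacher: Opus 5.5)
Your proof is correct and is essentially the paper's argument. The paper handles the collinear case the same way and then follows the ray from $\mathbf u$ through $\mathbf p$ to its last point $\mathbf q$ in the triangle, which lies on an edge; your minimum-ratio choice of $t$ is the barycentric computation of that exit point, with $t$ the weight of $\mathbf u$ and the vanishing coordinate identifying the exit edge. Your coordinate version treats $\mathbf p=\mathbf u$ and $\mathbf u$ outside the triangle uniformly, and its nondegenerate case carries over verbatim to Proposition~\ref{prop:three-dimensional_tri}.
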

\begin{proof}
If $\mathbf x,\mathbf y,\mathbf z$ are collinear, their convex hull is
one of their pairwise segments. Otherwise, for a point
$\mathbf p\ne\mathbf u$ in the triangle, extend the ray from $\mathbf u$
through $\mathbf p$ to its last point $\mathbf q$ in the triangle.
Then $\mathbf p\in[\mathbf u,\mathbf q]$, and $\mathbf q$ lies on an
edge. The triangle over that edge with vertex $\mathbf u$ contains
$\mathbf p$. The case $\mathbf p=\mathbf u$ is immediate.
\end{proof}

\begin{proposition}\label{lem:two dimensional_warmup}
Let $\va=(0,0)$, $\vb=(1,0)$, $\vc=(1,1)$, $\vd=(0,1)$, and put
$\Delta_1=\conv\{\va,\vb,\vc\}$ and
$\Delta_2=\conv\{\va,\vd,\vc\}$. For arbitrary
$\vx_1,\vx_2\in\mathbb R^2$, the convex body
\[
 K=\conv\bigl((\Delta_1+\vx_1)\cup(\Delta_2+\vx_2)\bigr)
\]
satisfies $K+\mathbb Z^2=\mathbb R^2$.
\end{proposition}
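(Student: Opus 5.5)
The plan is to prove Proposition~\ref{lem:two dimensional_warmup} by a parity argument on the torus $\mathbb R^2/\mathbb Z^2$. This is the two-dimensional model of the argument announced for Theorem~\ref{thm:kuhn_cover}: we find triangles contained in $K$ whose edges cancel in pairs modulo $\mathbb Z^2$, so that the mod-$2$ covering multiplicity is constant.

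\textbf{Step 1: the triangles.} Besides $\Delta_1+\vx_1$ and $\Delta_2+\vx_2$, consider
\[
 T=\conv\{\vx_1,\vx_1+\vc,\vx_2\},\qquad T'=\conv\{\vx_2,\vx_2+\vc,\vx_1+\vc\}.
\]
Both lie in $K$, because all their vertices are vertices of $\Delta_1+\vx_1$ or $\Delta_2+\vx_2$. Together they form the (possibly degenerate) parallelogram spanned by the two translated diagonals $[\vx_1,\vx_1+\vc]$ and $[\vx_2,\vx_2+\vc]$. Proposition~\ref{prop:two dimensional_tri} handles the degenerate case, when some of these triangles are segments.

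\textbf{Step 2: edges cancel in pairs.} For a point $\mathbf p$ in general position, let $N(\mathbf p)$ be the number of pairs $(S,\mathbf z)$ with $S$ one of the four triangles, $\mathbf z\in\mathbb Z^2$ and $\mathbf p\in S+\mathbf z$. Generic position means $\mathbf p$ avoids all lattice translates of the finitely many edges, which excludes only a null set. We check that every edge appears an even number of times modulo $\mathbb Z^2$:
\begin{itemize}
\item The horizontal edge $[\vx_1,\vx_1+\vb]$ of $\Delta_1+\vx_1$ is the translate by $-\vd$ of the edge $[\vx_2+\vd,\vx_2+\vc]$ up to the shift $\vx_1-\vx_2$. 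Modulo $\mathbb Z^2$, horizontal unit segments through a given point are identified exactly when their $y$-coordinates agree mod $1$.
\end{itemize}
The previous observation is not quite the right comparison, so we treat the square edges differently. Modulo $\mathbb Z^2$, the unit edges of $\Delta_1+\vx_1$ are $[\vx_1,\vx_1+\vb]$ and $[\vx_1+\vb,\vx_1+\vc]$. These coincide with the edges $[\vx_1+\vd,\vx_1+\vc]$ and $[\vx_1,\vx_1+\vd]$ of $\Delta_2+\vx_1$.

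To exploit this, deform $\vx_2$ continuously. Let $\vx_2(t)$ move from $\vx_1$ to the given $\vx_2$, and observe the following.
\begin{itemize}
\item The diagonal edges of $\Delta_1+\vx_1$ and $\Delta_2+\vx_2$ are matched by the edges $[\vx_1,\vx_1+\vc]$ of $T$ and $[\vx_2,\vx_2+\vc]$ of $T'$.
\item $T$ and $T'$ share the edge $[\vx_2,\vx_1+\vc]$.
\item The remaining edges of $T$ and $T'$ are $[\vx_1,\vx_2]$ and $[\vx_2+\vc,\vx_1+\vc]$, which are lattice translates of each other by $\vc$.
\item The unit edges of $\Delta_2+\vx_2$ are translates of those of $\Delta_1+\vx_1$ shifted by $\vx_2-\vx_1$.
\end{itemize}

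\textbf{Step 3: parity is preserved.} When $\mathbf p$ crosses a translate of some edge, $N$ changes by $\pm1$ for each triangle on that edge. Because the edges are paired modulo $\mathbb Z^2$, the change is even. So $N \bmod 2$ is constant on the generic set, for each fixed configuration.

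The unit edges need separate care. I would handle them by a homotopy in $\vx_2$. The mod-$2$ value $N(\mathbf p)\bmod 2$ at a fixed generic $\mathbf p$ changes only when a moving edge sweeps across $\mathbf p$. The moving unit edges of $\Delta_2+\vx_2(t)$ sweep together with the moving edges $[\vx_1,\vx_2(t)]$ and $[\vx_2(t)+\vc,\vx_1+\vc]$ of $T$ and $T'$. Ensuring that every sweep contributes evenly is the main obstacle, and it needs an honest local check.

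\textbf{Step 4: base case and conclusion.} At $t=0$ we have $T=T'=$ a segment, and $\Delta_1+\vx_1,\ \Delta_2+\vx_1$ tile the square. So $N\equiv1$.

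Hence $N$ is odd, and in particular positive, for every generic $\mathbf p$. Since $K+\mathbb Z^2$ is closed and contains $T$, $T'$ and the $\Delta_i+\vx_i$, it contains all generic points and therefore all of $\mathbb R^2$.
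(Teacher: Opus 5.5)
There is a genuine gap: your Step~3 is false, and no argument based on the four triangles $\Delta_1+\vx_1$, $\Delta_2+\vx_2$, $T$, $T'$ can succeed.

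The diagonals, the shared edge $[\vx_2,\vx_1+\vc]$, and the pair $[\vx_1,\vx_2]$, $[\vx_2+\vc,\vx_1+\vc]$ do cancel modulo $\mathbb Z^2$. The four unit edges do not. Write $\vx_i=(x_i,y_i)$. Modulo $\mathbb Z^2$, the edges $[\vx_1,\vx_1+\vb]$ and $[\vx_2+\vd,\vx_2+\vc]$ close up into horizontal circles at heights $y_1$ and $y_2$. The edges $[\vx_1+\vb,\vx_1+\vc]$ and $[\vx_2,\vx_2+\vd]$ close up into vertical circles at $x_1$ and $x_2$. These cancel only when $\vx_2-\vx_1\in\mathbb Z^2$. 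Otherwise, crossing an uncancelled circle changes $N$ by exactly one, so $N \bmod 2$ is not constant.

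The homotopy cannot help. Generically, when a unit edge of $\Delta_2+\vx_2(t)$ sweeps over $\mathbf p$, only that triangle changes membership. The moving edges $[\vx_1,\vx_2(t)]$ and $[\vx_2(t)+\vc,\vx_1+\vc]$ coincide modulo $\mathbb Z^2$, so together they contribute an even amount.

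In fact, the lattice translates of your four triangles can miss an open set. Take $\vx_1=\mathbf 0$ and $\vx_2=(1/2,0)$. The sets $\Delta_1$, $\Delta_2+\vx_2$ and $T\cup T'=\{(x,y):0\le y\le 1,\ 0\le x-y\le 1/2\}$ all lie in the strip $0\le y\le 1$. Each meets the line $y=1/2$ only where $1/2\le x\le 1$. Hence the interior point $(3/10,1/2)$ of $K$ lies in no lattice translate of any of them.

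The missing idea is to use the vertices $\vx_1+\vb$ and $\vx_2+\vd$, which $T$ and $T'$ never use; triangles through them absorb the unit edges. The paper normalizes $\vx_1=\mathbf 0$, $\vx_2=\vx$ and cones from the single point $\vd+\vx$ over the \emph{untranslated} triangle $\Delta_2$. It uses $\conv\{\va,\vd+\vx,\vd\}\subset K-\vb$, $\conv\{\vd,\vd+\vx,\vc\}\subset K+\vd$ and $\conv\{\vc,\vd+\vx,\va\}\subset K$. Each containment is checked by shifting back to vertices of $\Delta_1$ and $\Delta_2+\vx$. Proposition~\ref{prop:two dimensional_tri} with $\mathbf u=\vd+\vx$ puts $\Delta_2$ inside their union. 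Together with $\Delta_1\subset K$, this covers $[0,1]^2$ directly, with no parity, homotopy or genericity argument. (With $\Delta_1$, these three triangles have all twelve edges paired modulo $\mathbb Z^2$, so your parity scheme would work for this family; in the plane it is simply unnecessary.)
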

\begin{proof}
After translating $K$, we may assume that
$K=\conv(\Delta_1\cup(\Delta_2+\vx))$. The following triangles lie
in lattice translates of $K$:
\begin{align*}
 T_1&=\conv\{\va,\vd+\vx,\vd\}\subset K+(-1,0),\\
 T_2&=\conv\{\vd,\vd+\vx,\vc\}\subset K+(0,1),\\
 T_3&=\conv\{\vc,\vd+\vx,\va\}\subset K.
\end{align*}
Proposition~\ref{prop:two dimensional_tri}, with the additional point
$\vd+\vx$, gives $\Delta_2\subset T_1\cup T_2\cup T_3$.
Together with $\Delta_1\subset K$, this proves
$[0,1]^2\subset K+\mathbb Z^2$ and hence the assertion.
\end{proof}

\begin{remark}\label{rem:two dimensional_hexagon_tiling}
The planar construction is consistent with the classical description
of minimal lattice-covering bodies as centrally symmetric hexagons,
including parallelograms as degenerate cases; see
\cite{Dowker,Fary1950,Fejes1950}.
\end{remark}

\begin{proposition}[Three-dimensional triangulation principle]
\label{prop:three-dimensional_tri}
For $\mathbf x,\mathbf y,\mathbf z,\mathbf w,\mathbf u\in\mathbb R^3$,
\begin{align*}
 \conv\{\mathbf x,\mathbf y,\mathbf z,\mathbf w\}\subseteq{}&
 \conv\{\mathbf u,\mathbf y,\mathbf z,\mathbf w\}\cup
 \conv\{\mathbf x,\mathbf u,\mathbf z,\mathbf w\}\\
 &\cup\conv\{\mathbf x,\mathbf y,\mathbf u,\mathbf w\}\cup
 \conv\{\mathbf x,\mathbf y,\mathbf z,\mathbf u\}.
\end{align*}
\end{proposition}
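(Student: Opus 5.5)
We will follow the planar argument of Proposition~\ref{prop:two dimensional_tri} directly: a ray shot from $\mathbf u$ through a point reaches the boundary of the tetrahedron, and the face it hits, together with $\mathbf u$, spans a tetrahedron containing that point. Write $S=\conv\{\mathbf x,\mathbf y,\mathbf z,\mathbf w\}$, and let $S_{\mathbf x},S_{\mathbf y},S_{\mathbf z},S_{\mathbf w}$ denote the four tetrahedra on the right-hand side, where $S_{\mathbf v}$ is obtained from the vertex list of $S$ by replacing $\mathbf v$ with $\mathbf u$.

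\emph{Degenerate case.} Suppose $\mathbf x,\mathbf y,\mathbf z,\mathbf w$ are affinely dependent. Then $S$ lies in a plane (or in a line, or is a point). By Carath\'eodory's theorem in that affine subspace of dimension at most two, every point of $S$ lies in the convex hull of at most three of the four vertices. Each such triple omits some vertex $\mathbf v$, so it is contained in $S_{\mathbf v}$, which has the remaining three vertices and $\mathbf u$ as vertices. This disposes of the degenerate case and is essentially the same step as in the planar proof.

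\emph{Nondegenerate case.} Now let $S$ be a genuine tetrahedron and take $\mathbf p\in S$. If $\mathbf p=\mathbf u$, then $\mathbf p$ is a vertex of every $S_{\mathbf v}$ and there is nothing to show. Otherwise, consider the ray $\{\mathbf u+t(\mathbf p-\mathbf u):t\ge 0\}$. Its intersection with the compact convex set $S$ is a compact segment that contains $\mathbf p$ (at $t=1$). Let $t^*\ge 1$ be the largest parameter on it and put $\mathbf q=\mathbf u+t^*(\mathbf p-\mathbf u)$.

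The point $\mathbf q$ lies on $\bd S$. Otherwise a slightly larger $t$ would still give a point of $S$, contradicting the maximality of $t^*$. Hence $\mathbf q$ lies in some triangular facet, say $\conv\{\mathbf y,\mathbf z,\mathbf w\}$, which is opposite $\mathbf x$. Since $1\le t^*$, we have $\mathbf p=(1-1/t^*)\mathbf u+(1/t^*)\mathbf q\in[\mathbf u,\mathbf q]$. This segment lies in $\conv\{\mathbf u,\mathbf y,\mathbf z,\mathbf w\}=S_{\mathbf x}$. The other facets are handled symmetrically.

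The argument works equally whether $\mathbf u$ is inside or outside $S$, so no case split on the position of $\mathbf u$ is needed. The only point requiring some care is that $\mathbf q$ lies on a facet. This holds because $\bd S$ is the union of the four facets when $S$ is full-dimensional, and that is exactly why the degenerate case was treated separately.
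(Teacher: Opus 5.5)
Your proof is correct and follows the paper's own argument. The paper also handles the affinely dependent case by noting that every point lies in the convex hull of at most three generators. In the nondegenerate case it likewise runs the ray from $\mathbf u$ through $\mathbf p$ to its exit point $\mathbf q$ on a triangular facet, so that $\mathbf p\in[\mathbf u,\mathbf q]$. You supply the details the paper leaves implicit: Carath\'eodory in dimension at most two, maximality of $t^*$, and $\bd S$ being the union of the four facets.
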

\begin{proof}
If the four generators are affinely dependent, every point of their
convex hull belongs to the convex hull of at most three of them.
Otherwise the ray argument in
Proposition~\ref{prop:two dimensional_tri} applies, with a triangular
face in place of an edge.
\end{proof}

In the proof below, we first choose generic translation vectors, so that
every auxiliary tetrahedron is nondegenerate at the final parameter
$t=1$. Replacing each $\vx_i$ by $t\vx_i$ moves the vertices affinely
from the initial positions at $t=0$, where the nondegenerate pieces and
their lattice translates tile space, to this final configuration.
The convex hull of the translated Kuhn simplices varies during this
motion; containment in lattice translates of the prescribed $K$ is
needed only at $t=1$. The following lemma preserves coverage through
possible intermediate degeneracies. Arbitrary translation vectors will
then follow by approximation.

\begin{lemma}[Pairing triangular faces]
\label{lem:periodic-face-pairing}
For $1\le j\le N$, let
\[
 Q_j(t)=\conv\{\mathbf p_{j0}(t),\mathbf p_{j1}(t),
                      \mathbf p_{j2}(t),\mathbf p_{j3}(t)\},
 \qquad 0\le t\le1,
\]
where each vertex depends affinely on $t$. Assume that each $Q_j(t)$
is nondegenerate for at least one value of $t$. Suppose that the
$4N$ labelled triangular-face incidences can be paired. In each pair,
a fixed bijection of the three vertex labels identifies the two faces
after translation by one fixed vector of $\mathbb Z^3$, for every $t$.
If the nondegenerate members at $t=0$, together with their lattice
translates, cover almost every point exactly once, then
\[
             \bigcup_{j=1}^N(Q_j(t)+\mathbb Z^3)=\mathbb R^3
             \qquad(0\le t\le1).
\]
\end{lemma}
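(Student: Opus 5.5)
The plan is a degree (winding-number) argument with oriented simplices, done modulo two. For each $t$ and each point $\mathbf p\in\mathbb R^3$, let $\nu_t(\mathbf p)$ be the number of pairs $(j,\mathbf z)\in\{1,\dots,N\}\times\mathbb Z^3$ with $\mathbf p\in\inte(Q_j(t)+\mathbf z)$, counting only nondegenerate $Q_j(t)$. The goal is to show that $\nu_t(\mathbf p)$ is odd for almost every $\mathbf p$. Granting this, $\bigcup_j(Q_j(t)+\mathbb Z^3)$ is a closed set containing a dense set of points, and a closed dense set is all of $\mathbb R^3$. The set is closed because it is a locally finite union of compact sets: the vertices move affinely in $t\in[0,1]$, so the tetrahedra are uniformly bounded. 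At $t=0$ the hypothesis gives $\nu_0=1$ almost everywhere, which starts the argument.

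The key step is to show that the parity of $\nu_t(\mathbf p)$ is locally constant in $(t,\mathbf p)$ away from a null set. I would pass to the flat torus $\mathbb R^3/\mathbb Z^3$ and regard $\sum_j Q_j(t)$ as a simplicial $3$-chain with $\mathbb Z/2$ coefficients. Its boundary is the sum of the $4N$ labelled triangular faces. The pairing hypothesis says the faces cancel in pairs: the two faces in a pair coincide, vertex by vertex after a fixed lattice translation, for every $t$. Hence the boundary is zero mod $2$, so the chain is a mod-$2$ cycle on the torus. Its degree, meaning the local multiplicity $\nu_t(\mathbf p)\bmod 2$, is therefore constant on the complement of the union of the faces. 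Two ingredients make this elementary:
\begin{itemize}
\item a generic line segment joining two points crosses the union of faces transversally;
\item each crossing of a face changes $\nu_t$ by one from each of the two tetrahedra sharing the paired face, so it changes $\nu_t$ by $0$ or $2$.
\end{itemize}
Degenerate members contribute nothing to $\nu_t$, and their faces are still paired, so the same count holds.

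To connect different values of $t$, I would avoid a separate homotopy argument. The pairing holds for every $t$, so for each fixed $t$ the chain is a cycle, and its mod-$2$ degree is a homology class in $H_3(T^3;\mathbb Z/2)\cong\mathbb Z/2$. The family $t\mapsto$ chain is a continuous homotopy of cycles, since the vertices move affinely, so the degree is constant in $t$. Equivalently, in elementary terms, the degree is given by a continuous integral formula in $t$ that takes values in $\{0,1\}$, for instance $\bigl(\sum_j\vol(Q_j(t))\text{ with orientation signs}\bigr)\bmod 2$ after lifting to $\mathbb Z$-coefficients. Since the degree equals $1$ at $t=0$, it equals $1$ for all $t$. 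Concretely, I would realize this by checking that for a fixed generic $\mathbf p$ the parity of $\nu_t(\mathbf p)$ changes only when $\mathbf p$ crosses a moving face, and that such crossings come in matched pairs.

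The main obstacle is the degenerate and non-generic situations. These include times $t$ where some $Q_j(t)$ collapses (the assumption that each $Q_j$ is nondegenerate for some $t$ guarantees this happens at only finitely many $t$), and points lying on faces or lower-dimensional strata. I would handle them as follows:
\begin{itemize}
\item exclude the finitely many bad times;
\item exclude a measure-zero set of points $\mathbf p$, namely those on the faces, which form a finite union of triangles modulo the lattice;
\item at a collapse time, note that a tetrahedron passing through degeneracy has orientation reversing sign but contributes to the mod-$2$ count consistently, because its interior shrinks to zero measure.
\end{itemize}
Closedness then recovers the full covering at every $t$, including the bad ones.
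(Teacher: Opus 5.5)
Your proposal is correct. Its core is the paper's argument: at a nonexceptional parameter, crossing a paired face changes the count by $0$ or $\pm2$, so the parity of $\nu_t$ does not depend on the point; it equals $1$ at $t=0$, and closedness finishes.

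Where you differ is in passing between parameters. The paper never follows a moving face. For each $t_0$ it picks a point off all faces and all flat members at $t_0$. It notes that this point's membership in the finitely many relevant translates does not change for $t$ near $t_0$, and combines this with spatial constancy at nearby nonexceptional $t$. Exceptional parameters are then recovered by a separate subsequence argument.

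Your homological version replaces both steps at once. You glue the tetrahedra along paired faces into a cell complex $M$, so that the sum of the $3$-cells is a mod-$2$ cycle, and use that the maps $M\to\mathbb R^3/\mathbb Z^3$ for different $t$ are homotopic. This treats exceptional parameters uniformly, because a point off the image of the $2$-skeleton has local degree $\nu_t\bmod 2$ even when some members are flat. The price is technical. The label bijections need not preserve vertex order, so $\sum_jQ_j(t)$ is not literally a singular cycle on the torus. You have to work in $M$ (or after barycentric subdivision) and invoke local degree theory, which the paper's elementary count avoids.

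Your fixed-point sweep is also valid, and it even makes the spatial step unnecessary. Paired faces span the same plane for every $t$, so when a face passes through a generic point, membership flips in both partner tetrahedra at the same moment.

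Three statements need correction.

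\emph{The signed-volume formula.} It is not an equivalent form of the degree. An integral lift needs signs under which every pair of faces receives opposite induced orientations. The hypotheses permit same-side pairs (the $\pm2$ case), and the glued complex can be non-orientable, so such signs need not exist. Consequently $\sum_j\pm\vol(Q_j(t))$ need not be an integer or constant in $t$. Drop this remark.

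\emph{Degenerate members.} At an exceptional $t$, consider a face whose partner is flat. Crossing it, that pair contributes $\pm1$, not $0$ or $\pm2$. Parity survives only because the four faces of a flat tetrahedron cover almost every point of their plane an even number of times. Your sentence on degenerate members does not say this. Either add that fact, or, as the paper does, use the crossing argument only at nonexceptional $t$.

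\emph{Recovering excluded parameters.} If you exclude the exceptional parameters, bringing them back needs closedness jointly in $(t,\mathbf p)$. That is the paper's subsequence argument, not closedness at a single fixed $t$.
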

An elementary proof by counting covering tetrahedra is given in
Appendix~\ref{app:face-pairing}. Only the statement of the lemma
is needed below.

\begin{proof}[Proof of Theorem~\ref{thm:kuhn_cover}]
We construct 28 tetrahedra in lattice translates of $K$, pair their
112 triangular faces, and identify the subdivision obtained when all
six translations are zero.

Translate the configuration so that $\vx_1=\mathbf0$. For a lattice point
$\mathbf p$ and an index $0\le i\le5$, put
\begin{equation}\label{eq:theorem-one-colour-dictionary}
       \mathbf p^i=\mathbf p+\vx_{i+1}.
\end{equation}
The superscript records which translation is used. We retain the pair
$(\mathbf p,i)$ as a vertex label, so coincident points can still be
matched as the translations vary. In particular, $\mathbf p^0=\mathbf p$,
and $\mathbf p^i\in K$ whenever $\mathbf p$ is a vertex of $\Delta_{i+1}$.
In the following tables, brackets denote the convex hull of the
listed points; no ordering of the vertices is used.

\medskip
\noindent\textit{Tetrahedra associated with an octahedron.}
Put
\[
 \mathbf q_1=(2,1,0),\qquad \mathbf q_2=(2,1,1),\qquad
 \mathbf q_3=(2,2,1),\qquad \mathbf C=\vg^3,
\]
and consider the octahedron
\[
 \mathcal O=\conv\{\vb,\vg,\mathbf q_2,\mathbf q_1,\vc,\mathbf q_3\}.
\]
Four tetrahedra obtained from its alternating triangular faces are
\begin{equation}\label{eq:theorem-one-octahedral-cells}
\begin{array}{c|c|c}
 &\text{vertices}&\text{containing translate}\\ \hline
 O_1&[\mathbf C,\vb^0,\vc^0,\vg^0]&K\\
 O_2&[\mathbf C,\vb^0,\mathbf q_1^0,\mathbf q_2^0]&K+\vb\\
 O_3&[\mathbf C,\vc^0,\mathbf q_1^0,\mathbf q_3^0]&K+\vc\\
 O_4&[\mathbf C,\vg^0,\mathbf q_2^0,\mathbf q_3^0]&K+\vg.
\end{array}
\end{equation}
Every containment follows by subtracting the displayed lattice vector
and using \eqref{eq:kuhn-six-simplices}. For example,
\[
 O_2-\vb=[\vh^3,\va^0,\vc^0,\vg^0],\qquad
 O_3-\vc=[\ve^3,\va^0,\vb^0,\vg^0],\qquad
 O_4-\vg=[\va^3,\va^0,\vb^0,\vc^0].
\]
Each of the four points in these expressions belongs to $K$.

\medskip
\noindent\textit{Tetrahedra associated with $[\va,\ve,\vh,\vg]$.}
Write
\[
 \mathbf P=\ve^1,\quad \mathbf R=\ve^2,\quad
 \mathbf S=\vh^4,\quad \mathbf U=\vh^5.
\]
The next eleven tetrahedra also lie in the indicated translates of $K$:
\begin{equation}\label{eq:theorem-one-core-cells}
\begin{array}{c|c|c}
 &\text{vertices}&\text{containing translate}\\ \hline
 I_1 &[\mathbf P,\mathbf R,\mathbf S,\mathbf U]&K-\vb\\
 I_2 &[\va,\mathbf P,\mathbf R,\mathbf S]&K-\vb\\
 I_3 &[\vh,\mathbf P,\mathbf S,\mathbf U]&K-\vb\\
 I_4 &[\ve,\mathbf P,\mathbf R,\mathbf U]&K+\ve\\
 I_5 &[\vg,\mathbf R,\mathbf S,\mathbf U]&K+\ve\\
 I_6 &[\va,\vh,\mathbf P,\mathbf S]&K-\vb\\
 I_7 &[\va,\ve,\mathbf P,\mathbf R]&K-\vc\\
 I_8 &[\va,\vg,\mathbf R,\mathbf S]&K\\
 I_9 &[\ve,\vg,\mathbf R,\mathbf U]&K+\ve\\
 I_{10}&[\ve,\vh,\mathbf P,\mathbf U]&K+\ve-\vb\\
 I_{11}&[\vh,\vg,\mathbf S,\mathbf U]&K+\vh.
\end{array}
\end{equation}
Let
\[
 F_1=[\va,\ve,\vh],\quad
 F_2=[\va,\ve,\vg],\quad
 F_3=[\va,\vh,\vg],\quad
 F_4=[\ve,\vh,\vg],
\]
and put
\begin{equation}\label{eq:theorem-one-outer-apices}
 \boldsymbol\gamma_1=(-1,0,1)^3,\qquad
 \boldsymbol\gamma_2=\ve^3,\qquad
 \boldsymbol\gamma_3=\vh^3,\qquad
 \boldsymbol\gamma_4=(0,1,2)^3.
\end{equation}
Here the superscripts on the coordinate triples have the meaning in
\eqref{eq:theorem-one-colour-dictionary}. The last twelve tetrahedra are
\begin{equation}\label{eq:theorem-one-bridge-cells}
\begin{array}{c|c|c}
 &\text{vertices}&\text{containing translate}\\ \hline
 J_{11}&[\boldsymbol\gamma_1,\ve,\va,\mathbf P]&K-\vc\\
 J_{12}&[\boldsymbol\gamma_1,\vh,\va,\mathbf P]&K-\vb\\
 J_{13}&[\boldsymbol\gamma_1,\ve,\vh,\mathbf P]&K+\ve-\vb\\
 J_{21}&[\boldsymbol\gamma_2,\ve,\vg,\mathbf R]&K+\ve\\
 J_{22}&[\boldsymbol\gamma_2,\vg,\va,\mathbf R]&K\\
 J_{23}&[\boldsymbol\gamma_2,\va,\ve,\mathbf R]&K-\vc\\
 J_{31}&[\boldsymbol\gamma_3,\vg,\vh,\mathbf S]&K+\vh\\
 J_{32}&[\boldsymbol\gamma_3,\vh,\va,\mathbf S]&K-\vb\\
 J_{33}&[\boldsymbol\gamma_3,\va,\vg,\mathbf S]&K\\
 J_{41}&[\boldsymbol\gamma_4,\ve,\vg,\mathbf U]&K+\ve\\
 J_{42}&[\boldsymbol\gamma_4,\vg,\vh,\mathbf U]&K+\vh\\
 J_{43}&[\boldsymbol\gamma_4,\vh,\ve,\mathbf U]&K+\ve-\vb.
\end{array}
\end{equation}
The containments are checked as before; for instance,
\[
 I_7+\vc=[\vc^0,\vg^0,\vg^1,\vg^2],\qquad
 J_{41}-\ve=[\vh^3,\va^0,\vc^0,\vd^5].
\]
Together with $D=[\va^0,\vb^0,\vc^0,\vg^0]\subset K$, these are
28 tetrahedra, all contained in lattice translates of $K$. Denote
this family by $\mathcal Q$.

\medskip
\noindent\textit{Pairing the triangular faces.}
First consider $D,O_1,\ldots,O_4$. The faces opposite $\mathbf C$ in
$O_1,O_2,O_3,O_4$, translated by $\mathbf0,-\vb,-\vc,-\vg$, respectively,
are precisely the four faces of $D$. This gives four pairs.
The remaining twelve faces are triangles over the twelve edges of
$\mathcal O$ with vertex $\mathbf C$: each edge occurs in exactly one
of the four alternating base faces in
\eqref{eq:theorem-one-octahedral-cells}.
The other four alternating faces and the required lattice translations
are
\begin{equation}\label{eq:theorem-one-octahedral-face-pairing}
\begin{array}{c|c|c}
 &\text{face of }\mathcal O&\text{translation }\mathbf z_i\\ \hline
 F_1&[\mathbf q_1^0,\mathbf q_2^0,\mathbf q_3^0]&(2,1,0)\\
 F_2&[\vc^0,\vg^0,\mathbf q_3^0]&(1,1,0)\\
 F_3&[\vb^0,\vg^0,\mathbf q_2^0]&(1,0,0)\\
 F_4&[\vb^0,\vc^0,\mathbf q_1^0]&(1,0,-1).
\end{array}
\end{equation}
The translation $\mathbf z_i$ sends $F_i$ to the indicated face and
$\boldsymbol\gamma_i$ to $\mathbf C$. Thus the twelve remaining faces,
up to lattice translation, are
\begin{equation}\label{eq:kuhn-outer-triangles}
 [\boldsymbol\gamma_i,\mathbf u,\mathbf v],
 \qquad [\mathbf u,\mathbf v]\text{ an edge of }F_i,
 \qquad 1\le i\le4.
\end{equation}

Among $I_1,\ldots,I_{11}$ there are sixteen pairs of identical faces:
\begin{equation}\label{eq:kuhn-inner-face-pairs}
\begin{array}{c|c@{\qquad}c|c}
 \text{pair}&\text{common face}&\text{pair}&\text{common face}\\ \hline
 I_1,I_5&[\mathbf R,\mathbf S,\mathbf U]&I_3,I_{10}&[\vh,\mathbf P,\mathbf U]\\
 I_1,I_3&[\mathbf P,\mathbf S,\mathbf U]&I_3,I_6&[\vh,\mathbf P,\mathbf S]\\
 I_1,I_4&[\mathbf P,\mathbf R,\mathbf U]&I_4,I_9&[\ve,\mathbf R,\mathbf U]\\
 I_1,I_2&[\mathbf P,\mathbf R,\mathbf S]&I_4,I_{10}&[\ve,\mathbf P,\mathbf U]\\
 I_2,I_8&[\va,\mathbf R,\mathbf S]&I_4,I_7&[\ve,\mathbf P,\mathbf R]\\
 I_2,I_6&[\va,\mathbf P,\mathbf S]&I_5,I_{11}&[\vg,\mathbf S,\mathbf U]\\
 I_2,I_7&[\va,\mathbf P,\mathbf R]&I_5,I_9&[\vg,\mathbf R,\mathbf U]\\
 I_3,I_{11}&[\vh,\mathbf S,\mathbf U]&I_5,I_8&[\vg,\mathbf R,\mathbf S].
\end{array}
\end{equation}
Put $(\mathbf r_1,\mathbf r_2,\mathbf r_3,\mathbf r_4)
=(\mathbf P,\mathbf R,\mathbf S,\mathbf U)$.
Inspection of \eqref{eq:theorem-one-core-cells} shows that the twelve
unpaired faces are
\begin{equation}\label{eq:kuhn-inner-triangles}
 [\mathbf r_i,\mathbf u,\mathbf v],
 \qquad [\mathbf u,\mathbf v]\text{ an edge of }F_i,
 \qquad 1\le i\le4.
\end{equation}

For each $i$, the three tetrahedra $J_{i1},J_{i2},J_{i3}$ have the form
\[
 [\boldsymbol\gamma_i,\mathbf r_i,\mathbf u,\mathbf v],
 \qquad [\mathbf u,\mathbf v]\text{ an edge of }F_i.
\]
Their faces $[\boldsymbol\gamma_i,\mathbf r_i,\mathbf u]$ occur twice,
once for each of the two edges of $F_i$ incident with $\mathbf u$.
This gives three pairs for each $i$, hence twelve pairs in all.
The six faces left in this group are the three triangles in
\eqref{eq:kuhn-outer-triangles} and the three in
\eqref{eq:kuhn-inner-triangles}. They pair with the faces already
listed for $D,O_1,\ldots,O_4$ and $I_1,\ldots,I_{11}$.
We have therefore accounted for
\[
                     4+16+12+12+12=56
\]
pairs, containing all $28\cdot4=112$ triangular faces. These are
identities between vertex labels, so every pairing persists under
arbitrary choices of $\vx_2,\ldots,\vx_6$.

\medskip
\noindent\textit{The subdivision at zero translations.}
When $\vx_1=\cdots=\vx_6=\mathbf0$, exactly six of the 28 tetrahedra are
nondegenerate. Choosing lattice translates inside the unit cube gives
\begin{equation}\label{eq:theorem-one-initial-subdivision}
\begin{array}{c|c@{\qquad}c|c}
 \text{tetrahedron}&\text{vertices}&\text{tetrahedron}&\text{vertices}\\ \hline
 D&[\va,\vb,\vc,\vg]&I_8&[\va,\ve,\vg,\vh]\\
 O_2-\vb&[\va,\vc,\vg,\vh]&J_{12}+\vb&[\vb,\ve,\vf,\vg]\\
 O_3-\vc&[\va,\vb,\ve,\vg]&J_{41}-\ve&[\va,\vc,\vd,\vh].
\end{array}
\end{equation}
These six tetrahedra form a subdivision of the cube. To see this
directly, start with \eqref{eq:kuhn-six-simplices} and keep $\Delta_1$
and $\Delta_4$. The union $\Delta_2\cup\Delta_3$ is the square pyramid
\[
        \conv\{\va,\vb,\ve,\vf,\vg\},
\]
with base $[\va,\vb,\vf,\ve]$ in the plane $y=0$ and vertex $\vg$.
Replacing its base diagonal $[\va,\vf]$ by $[\vb,\ve]$ gives the two
tetrahedra $[\va,\vb,\ve,\vg]$ and $[\vb,\ve,\vf,\vg]$.
Similarly, $\Delta_5\cup\Delta_6$ is the square pyramid
\[
        \conv\{\va,\vc,\vd,\vg,\vh\},
\]
with base $[\vd,\vc,\vg,\vh]$ in the plane $y=1$ and vertex $\va$.
Replacing $[\vd,\vg]$ by $[\vc,\vh]$ gives
$[\va,\vc,\vd,\vh]$ and $[\va,\vc,\vg,\vh]$.
Figure~\ref{fig:kuhn-diagonal-changes} shows the two changes.
Each changes only the diagonal in the base of one square pyramid,
so the six resulting tetrahedra still cover the cube exactly once
away from their faces. The changes also agree on opposite cube faces,
since $[\vb,\ve]+(0,1,0)=[\vc,\vh]$.

\begin{figure}[tbp]
\centering
\begin{tikzpicture}[scale=0.92,line cap=round,line join=round]
 \begin{scope}
  \coordinate (a) at (0,0); \coordinate (b) at (2.5,0);
  \coordinate (f) at (2.5,2.5); \coordinate (e) at (0,2.5);
  \fill[gray!9] (a)--(b)--(e)--cycle;
  \draw (a)--(b)--(f)--(e)--cycle;
  \draw[densely dashed,gray] (a)--(f);
  \draw[thick] (b)--(e);
  \node[below left] at (a) {$\va$}; \node[below right] at (b) {$\vb$};
  \node[above right] at (f) {$\vf$}; \node[above left] at (e) {$\ve$};
  \node at (1.25,3.1) {vertex $\vg$, base $y=0$};
  \node at (1.25,-0.8) {$[\va,\vf]\longmapsto[\vb,\ve]$};
 \end{scope}
 \begin{scope}[xshift=6cm]
  \coordinate (d) at (0,0); \coordinate (c) at (2.5,0);
  \coordinate (g) at (2.5,2.5); \coordinate (h) at (0,2.5);
  \fill[gray!9] (d)--(c)--(h)--cycle;
  \draw (d)--(c)--(g)--(h)--cycle;
  \draw[densely dashed,gray] (d)--(g);
  \draw[thick] (c)--(h);
  \node[below left] at (d) {$\vd$}; \node[below right] at (c) {$\vc$};
  \node[above right] at (g) {$\vg$}; \node[above left] at (h) {$\vh$};
  \node at (1.25,3.1) {vertex $\va$, base $y=1$};
  \node at (1.25,-0.8) {$[\vd,\vg]\longmapsto[\vc,\vh]$};
 \end{scope}
\end{tikzpicture}
\caption{The bases of the two square pyramids. Joining each base
triangle to the indicated vertex gives a tetrahedron. Replacing the
dashed diagonals by the solid diagonals gives the six tetrahedra in
\eqref{eq:theorem-one-initial-subdivision}.}
\label{fig:kuhn-diagonal-changes}
\end{figure}

\medskip
\noindent\textit{Independent translations.}
For a generic choice of $\vx_2,\ldots,\vx_6$, all 28 tetrahedra are
nondegenerate. Indeed, except for $D$, the vertices in each row consist
of at most three affinely independent points of label 0 and at most
one point of each other label. The latter points can be moved
independently, so no row determinant is the zero polynomial in the
translation coordinates. The complement of these finitely many zero
sets is dense.

For such a choice, replace every $\vx_i$ by $t\vx_i$.
The face pairings persist, and the initial subdivision is
\eqref{eq:theorem-one-initial-subdivision}. Lemma~\ref{lem:periodic-face-pairing}
therefore gives
\[
 \mathbb R^3=\bigcup_{Q\in\mathcal Q}
                 (Q+\mathbb Z^3)
 \subseteq K+\mathbb Z^3.
\]
For arbitrary translation vectors,
approximate them by generic choices. The corresponding convex hulls
converge to $K$ and remain uniformly bounded. For each point of space,
only finitely many lattice translates of these convex hulls can
contain it. Passing to a subsequence with one fixed lattice translate
proves coverage in the limit.
\end{proof}

\begin{lemma}[Balitskiy~{\cite[Lemma~5.3]{Balitskiy}}]
\label{lem:balitskiy}
Let $\mathcal T$ be a locally finite triangulation of $\mathbb R^n$
whose simplices have uniformly bounded diameters, and let $K$ be a
convex body. If each maximal simplex of $\mathcal T$ is contained in
some translate of $K$, then every translate of $K$ contains a vertex
of $\mathcal T$.
\end{lemma}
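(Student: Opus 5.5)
The plan is to argue by contradiction with a degree argument for a piecewise-linear map built from nearest-point projections. Suppose some translate $K+\vy$ contains no vertex of $\mathcal T$. Let $p:\mathbb R^n\to K+\vy$ be the nearest-point projection and put $g(\vx)=\vx-p(\vx)$. The map $g$ is continuous and $2$-Lipschitz, and $g(\vv)\neq\mathbf 0$ for every vertex $\vv$ of $\mathcal T$. Let $G$ be the piecewise-linear map on $\mathcal T$ that agrees with $g$ at the vertices and is affine on each simplex. This is well defined because $\mathcal T$ is a locally finite triangulation.

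\textbf{Step 1: $G$ has a zero.} Let $\delta$ bound the diameters of the simplices. For $\vx$ in a simplex with vertices $\vv_i$, we have $G(\vx)=\sum\lambda_i g(\vv_i)$ with $\lambda_i\ge0$ and $\sum\lambda_i=1$. Hence $|G(\vx)-g(\vx)|\le 2\delta$. Moreover $p$ takes values in the bounded set $K+\vy$, so $|g(\vx)-\vx|\le C$ for some constant $C$. Consider the homotopy $H_s(\vx)=\vx-s\,p(\vx)$ for $0\le s\le1$, followed by the straight-line homotopy from $g$ to $G$. On a sphere $|\vx|=R$ with $R>C+2\delta$ neither homotopy has a zero. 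Therefore $G$ restricted to this sphere has degree $1$ as a map into $\mathbb R^n\setminus\{\mathbf 0\}$, and so $G$ vanishes somewhere in the ball. Such a zero lies in some maximal simplex $S=\conv\{\vv_0,\ldots,\vv_n\}$, which gives
\[
 \sum_{i=0}^n\lambda_i\,\vn_i=\mathbf 0,\qquad \vn_i=\vv_i-p(\vv_i)\neq\mathbf 0,\quad \lambda_i\ge0,\ \sum\lambda_i=1 .
\]

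\textbf{Step 2: contradiction via support functions.} By hypothesis $S\subseteq K+\vt$ for some $\vt$. Since $\vn_i$ is an outer normal of $K+\vy$ at $p(\vv_i)$, we have $\langle \vn_i,p(\vv_i)\rangle=h_{K}(\vn_i)+\langle\vn_i,\vy\rangle$. Therefore
\[
 \langle\vn_i,\vv_i\rangle=h_K(\vn_i)+\langle\vn_i,\vy\rangle+|\vn_i|^2 .
\]
On the other hand, $\vv_i\in K+\vt$ gives $\langle\vn_i,\vv_i\rangle\le h_K(\vn_i)+\langle\vn_i,\vt\rangle$. Comparing the two yields $\langle\vn_i,\vt-\vy\rangle\ge|\vn_i|^2>0$ for every $i$. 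Multiplying by $\lambda_i$ and summing gives $0=\langle\sum\lambda_i\vn_i,\vt-\vy\rangle>0$, which is a contradiction. Hence every translate of $K$ contains a vertex of $\mathcal T$.

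The step I expect to need the most care is the degree argument in Step 1. There I must check that the uniform diameter bound makes $G$ a bounded perturbation of $g$, and that $g$ itself is a bounded perturbation of the identity. Together these ensure that the linear homotopies avoid $\mathbf 0$ on a large sphere. The convex-geometric inequality in Step 2 is routine once the zero of $G$ has been found.
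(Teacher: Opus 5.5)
Your proof is correct. With $C=\max_{\mathbf z\in K+\mathbf y}\|\mathbf z\|$, both homotopies stay at distance at least $R-C-2\delta>0$ from $\mathbf 0$ on the sphere. Hence $G$ vanishes in some maximal simplex $S$. Your support-function computation then places every $g(\mathbf v_i)$, $\mathbf v_i\in S$, in the open half-space $\{\mathbf w:\langle\mathbf w,\mathbf t-\mathbf y\rangle>0\}$, which excludes $\mathbf 0$ from their convex hull. Both hypotheses enter where they should. Local finiteness, together with the face-to-face structure implicit in the word ``triangulation'', makes $G$ well defined and continuous. The diameter bound gives $\|G-g\|\le 2\delta$ pointwise. (In fact $g=\mathrm{id}-p$ is $1$-Lipschitz, but $2$ suffices.)

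The paper contains no proof of this lemma. It quotes it from Balitskiy, describes his argument only as topological, and uses it in Remark~\ref{rem:kuhn-all-dimensions} to extend Theorem~\ref{thm:kuhn_cover} to every dimension. The paper's own proof of the three-dimensional covering statement goes a different way:
\begin{itemize}
\item it writes down $28$ explicit tetrahedra inside lattice translates of $K$ and pairs their $112$ faces modulo $\mathbb Z^3$;
\item Lemma~\ref{lem:periodic-face-pairing} shows that the covering multiplicity stays odd as the translations $t\mathbf x_i$ are deformed from $t=0$, where the tetrahedra subdivide the cube;
\item a genericity argument and a limiting step handle degenerate translation vectors.
\end{itemize}
That is also a degree argument, but a mod-$2$ count applied to an explicit periodic family of tetrahedra rather than an integer degree of a vector field on a large sphere. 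It buys an explicit certificate: tetrahedra inside translates of $K$ that cover space with odd multiplicity. The price is case-by-case verification tied to the Kuhn configuration in $\mathbb R^3$.

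Your argument is shorter and needs no genericity or approximation. It works for any locally finite triangulation with bounded cells and any compact convex $K$. Combined with the paper's remark, it gives the Kuhn covering criterion in all dimensions directly, though it produces no explicit decomposition.
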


\begin{remark}\label{rem:kuhn-all-dimensions}
Balitskiy's lemma also gives the covering assertion in every dimension.
For $\sigma\in S_n$, let
\[
 \Delta_\sigma=\conv\{\mathbf0,\mathbf e_{\sigma(1)},
     \mathbf e_{\sigma(1)}+\mathbf e_{\sigma(2)},\ldots,
     \mathbf e_{\sigma(1)}+\cdots+\mathbf e_{\sigma(n)}\},
\]
where $\mathbf e_1,\ldots,\mathbf e_n$ are the standard basis vectors
and $S_n$ is the permutation group. These simplices and their lattice
translates form the standard Kuhn triangulation, with vertex set
$\mathbb Z^n$. If $K$ contains a translate of each $\Delta_\sigma$,
apply Lemma~\ref{lem:balitskiy} to $K-\mathbf y$ for any
$\mathbf y\in\mathbb R^n$. It gives a point $\mathbf k\in K$ with
$\mathbf k-\mathbf y\in\mathbb Z^n$, and hence
$\mathbf y\in K+\mathbb Z^n$.
The three-dimensional construction above makes the auxiliary tetrahedra
and their persistent face pairings explicit.
\end{remark}

\section{Definition and Properties of Minimal Covering Bodies}\label{sec:definition}

\subsection{Definitions and Finiteness}
Motivated by the Kuhn-simplex constructions, we now study inclusion-minimal
lattice covering bodies.  The same notion and the polytopality theorem
below were obtained independently by Codenotti, Freyer, and
Krivoku\'ca~\cite{Codenotti}.

\begin{definition}[Minimal covering body]
Let $K\subset\Rr^n$ be a convex body and let $\Lambda$ be a full-rank
lattice such that $K+\Lambda=\Rr^n$.  If no proper convex subbody
$K'\subsetneq K$ satisfies $K'+\Lambda=\Rr^n$, then $K$ is a
\textbf{minimal covering body} with respect to $\Lambda$.
We also call $K+\Lambda$ a minimal covering of $\Rr^n$.
\end{definition}

The comparison is with all convex subbodies, including non-symmetric
ones when $K$ is centrally symmetric.  This is minimality under
inclusion, rather than minimum volume or minimum covering density.
We begin with the precise truncation fact
needed in the proof of polytopality.  A point $\vv\in K$ is exposed if some
linear functional attains its maximum on $K$ only at $\vv$.

The following lemma also follows from
\cite[Lemma~2.2(iii), equation~(2.2)]{Codenotti} after a linear
change of lattice coordinates. We include a direct proof by truncation.

\begin{lemma}[Exposed caps]\label{lem:exposed-cap}
Let $K$ be a minimal covering body with respect to $\Lambda$.
If $\vv$ is an exposed point of $K$, then every open neighborhood $U$ of
$\vv$ contains a point $\mathbf{p}\in K$ such that
\[
                 \mathbf{p}\notin K-\vz\qquad(\vnull\ne \vz\in\Lambda).
\]
\end{lemma}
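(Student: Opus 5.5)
We argue by contradiction and truncate $K$ near $\vv$. Suppose some open neighborhood $U$ of $\vv$ is such that every point $\mathbf p\in K\cap U$ lies in $K-\vz$ for some $\vnull\ne\vz\in\Lambda$. Since $\vv$ is exposed, there is a linear functional $\ell$ with $\ell(\vv)=\max_K\ell=:h$ and $\ell(\mathbf x)<h$ for all $\mathbf x\in K\setminus\{\vv\}$. A compactness argument shows that the caps $C_\varepsilon=\{\mathbf x\in K:\ell(\mathbf x)\ge h-\varepsilon\}$ shrink to $\vv$ as $\varepsilon\to0$. Otherwise we would get a sequence in $K\setminus B(\vv,r)$ with $\ell\to h$, and a limit point of it would be a second maximizer. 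Fix $\varepsilon>0$ with $C_\varepsilon\subset U$, and put
\[
K'=\{\mathbf x\in K:\ell(\mathbf x)\le h-\varepsilon/2\}.
\]
This is a convex body, since for small $\varepsilon$ it still contains interior points of $K$, and $K'\subsetneq K$ because $\vv\notin K'$.

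We next show $K'+\Lambda=\Rr^n$, contradicting minimality. Take $\mathbf y\in\Rr^n$ and write $\mathbf y=\mathbf k+\vw$ with $\mathbf k\in K$, $\vw\in\Lambda$. If $\ell(\mathbf k)\le h-\varepsilon/2$ we are done. Otherwise $\mathbf k$ lies in the open cap $C^\circ=\{\mathbf x\in K:\ell(\mathbf x)>h-\varepsilon/2\}\subset U$. By assumption, $\mathbf k+\vz\in K$ for some $\vnull\ne\vz\in\Lambda$, so $\mathbf y=(\mathbf k+\vz)+(\vw-\vz)$ is again a representation. The difficulty is that $\mathbf k+\vz$ might itself lie in the cap. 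But both $\mathbf k$ and $\mathbf k+\vz$ would then lie in $C_\varepsilon$, whose diameter is smaller than the minimal norm $\lambda_1(\Lambda)$ once $\varepsilon$ is chosen small enough. This is impossible for $\vz\ne\vnull$. We therefore also require $\operatorname{diam}C_\varepsilon<\lambda_1(\Lambda)$, using that the caps shrink to $\vv$. Then $\mathbf k+\vz\in K\setminus C^\circ\subset K'$, and $\mathbf y\in K'+\Lambda$.

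The main obstacle is ensuring that the replacement point leaves the cap, and the diameter bound from exposedness settles it with a single step, with no iteration needed. A smaller point is checking that $K'$ has nonempty interior, which holds since $\ell$ is not constant on $K$ and $\varepsilon$ is small. The neighborhood $U$ is only used through $C_\varepsilon\subset U$. This justifies applying the hypothesis to the points of $C^\circ$, and it yields the desired point $\mathbf p$ by contradiction.
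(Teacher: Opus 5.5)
Your proof is correct and follows the paper's argument essentially step for step. You cut off a small cap with the exposing functional, choose it inside $U$ with diameter below the minimal lattice norm so that the alternative representative $\mathbf k+\vz$ is forced into the truncated body $K'$, and conclude $K'+\Lambda=\Rr^n$, contradicting minimality; the only cosmetic difference is that you truncate at level $h-\varepsilon/2$ while controlling the larger cap $C_\varepsilon$, whereas the paper truncates and controls at the same level $h-\delta$.
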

\begin{proof}
Choose $r>0$ such that $B(\vv,r)\subset U$ and
$2r<\min\{\|\vz\|:\vnull\ne \vz\in\Lambda\}$, where $B(\vv,r)$ is the open
Euclidean ball.  Since $\vv$ is exposed, there is a linear functional $f$
with $h:=f(\vv)=\max_K f$ and $K\cap\{f=h\}=\{\vv\}$.  For sufficiently
small $\delta>0$, compactness gives
\[
 C_\delta:=K\cap\{f\ge h-\delta\}\subset B(\vv,r),
 \qquad K':=K\cap\{f\le h-\delta\}\subsetneq K.
\]
The inclusion follows because a sequence outside $B(\vv,r)$ with
$f$-values tending to $h$ would have a limit different from $\vv$ in
$K\cap\{f=h\}$. Choose an interior point $\mathbf{q}$ of $K$ and also require
$\delta<h-f(\mathbf{q})$. Then $\mathbf{q}\in\inte K'$, so $K'$ has nonempty interior.

Suppose that every point of $K\cap U$ belongs to some $K-\vz$ with
$\vnull\ne \vz\in\Lambda$.  For $\mathbf{p}\in K\setminus K'$, choose such a $\vz$;
then $\mathbf{p}+\vz\in K$.  The points $\mathbf{p}$ and $\mathbf{p}+\vz$ cannot both lie in
$C_\delta$, since its diameter is smaller than $\|\vz\|$; see
Figure~\ref{fig:cap-and-ray}(a).  Hence
$\mathbf{p}+\vz\in K'$, and therefore
\[
 K\setminus K'\subset K'+\Lambda,\qquad
 \Rr^n=K+\Lambda
       =\bigl(K'\cup(K\setminus K')\bigr)+\Lambda
       =K'+\Lambda.
\]
This contradicts minimality.
\end{proof}

Here a $\Lambda$-lattice polytope is the convex hull of finitely many
points of $\Lambda$; it may have dimension less than $n$.
For a convex body $K$, a full-rank lattice $\Lambda$, and a point
$\vc\in K$, define the \textbf{contact set} at $\vc$ by
\begin{equation}\label{eq:contact-set-definition}
 \mathcal Z_K(\vc)=(K-\vc)\cap\Lambda
              =\{\vz\in\Lambda:\vc+\vz\in K\}.
\end{equation}
Thus $\vz\in\mathcal Z_K(\vc)$ means that the translate $K-\vz$ contains
$\vc$.  The lattice is fixed when it is suppressed from the notation.
The associated \textbf{root contact polytope} is
\begin{equation}\label{eq:root-contact-polytope}
                        P_{\vc}=\conv\mathcal Z_K(\vc),
                        \qquad\text{with distinguished point }\vnull.
\end{equation}
These data are determined by $(K,\Lambda,\vc)$ and involve no choice of
generating polytopes. Compactness of $K$ and discreteness of $\Lambda$
make $\mathcal Z_K(\vc)$ finite, so $P_{\vc}$ is a lattice polytope,
possibly of lower dimension.

\begin{proposition}[Polytopality]\label{prop:finiteness}
Every minimal covering body $K$ with respect to a lattice $\Lambda$ is
a polytope.  Moreover, $K$ is the convex hull of translations of
$\Lambda$-lattice polytopes.
\end{proposition}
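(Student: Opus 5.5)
The plan is to show that $K$ has only finitely many exposed points. By Straszewicz's theorem the exposed points are dense in the extreme points, so $K$ is then the convex hull of finitely many points, hence a polytope. To control the exposed points I will use Lemma~\ref{lem:exposed-cap} together with the contact sets $\mathcal Z_K(\vc)$ from \eqref{eq:contact-set-definition}.

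The key step is a covering-multiplicity argument. Fix an exposed point $\vv$ and a small neighbourhood $U$. Lemma~\ref{lem:exposed-cap} gives $\mathbf p\in K\cap U$ with $\mathcal Z_K(\mathbf p)=\{\vnull\}$, so $\mathbf p$ is covered only by $K$ itself. Because $K+\Lambda=\Rr^n$ is a closed locally finite covering and $\mathbf p$ lies in exactly one translate, $\mathbf p$ is not in $\bigcup_{\vz\ne\vnull}(K-\vz)$, and this union is closed. Hence a neighbourhood of $\mathbf p$ is covered only by $K$, so $\mathbf p\in\inte K$.

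Next I bound how many exposed points can exist. Let $W=\{\mathbf x\in\Rr^n:\mathbf x \text{ lies in exactly one translate } K+\vz\}$. This set is open and $\Lambda$-periodic, and its closure meets $K$ in a set whose points are limits of interior points that are singly covered. By the previous step every exposed point of $K$ is such a limit, and each lies on $\bd K$.

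Now I exploit the local structure near a boundary point $\vv$ of $K$. Points of $K$ near $\vv$ but outside $\inte K$ must be covered by the finitely many translates $K-\vz$ with $\vz\in\mathcal Z_K(\vv)$. On the other hand, $\vv$ is a limit of singly covered points of $\inte K$. The heart of the finiteness claim is this: a boundary point of $K$ adjacent to a singly covered region is exposed only if it is a vertex of the arrangement formed by $K$ and the finitely many neighbouring translates $K-\vz$. I would argue by contradiction. Suppose infinitely many exposed points $\vv_k$ accumulate at $\vv$. Then $\mathcal Z_K(\vv_k)\subseteq\mathcal Z_K(\vv)$ for large $k$, by upper semicontinuity of contact sets. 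Passing to a subsequence, all $\vv_k$ share one contact set $Z$.

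The region $K\setminus\bigcup_{\vz\in Z\setminus\{\vnull\}}(K-\vz)$ contains singly covered points near each $\vv_k$. I would truncate $K$ by a hyperplane cutting off a small cap at some $\vv_k$, as in the proof of Lemma~\ref{lem:exposed-cap}, and show that the removed cap is still covered. This step is the \emph{main obstacle}. I expect to need the second assertion of the proposition first: show directly that
\[
K=\conv\bigcup_{\vc\in K}\bigl(\vc-\vz_{\vc}+P_{\vc}\bigr)
\]
for suitable $\vz_{\vc}\in\mathcal Z_K(\vc)$, where $P_{\vc}$ is the root contact polytope \eqref{eq:root-contact-polytope}.

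The cleanest route is then the following. For each $\vc\in K$, the set $\vc+P_{\vc}$ is a translate of a lattice polytope contained in $K$. Let $K'=\conv\bigcup_{\vc\in K}(\vc+P_{\vc})$. I need $K'+\Lambda=\Rr^n$. Any $\mathbf y\in\Rr^n$ lies in some $K-\vz$, so $\mathbf y+\vz=\vc\in K$, and $\vc\in\vc+P_{\vc}\subseteq K'$, which gives $K'=K$ trivially. So the real content is to restrict to a finite family. Compactness yields only finitely many distinct contact sets up to the relevant equivalence, since $\mathcal Z_K(\vc)$ is a subset of the finite set $(K-K)\cap\Lambda$.

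For each of the finitely many subsets $Z\subseteq(K-K)\cap\Lambda$, the set $A_Z=\{\vc\in K:\mathcal Z_K(\vc)\supseteq Z\}=\bigcap_{\vz\in Z}(K-\vz)$ is a compact convex set. Every exposed point of $K$ is an exposed point of $A_Z$ for $Z=\mathcal Z_K(\vv)$. Applying Lemma~\ref{lem:exposed-cap} then shows that an exposed point of $K$ must be an extreme point of some $A_Z$ for which $A_Z$ degenerates to a single point, namely an intersection point of boundaries of lattice translates. The finiteness of these points modulo the combinatorics, and the verification that a cap around a non-isolated exposed point can be removed, is where the real work lies.

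Once finiteness of exposed points is established, $K$ is the convex hull of the finitely many sets $\vv+P_{\vv}$ over vertices $\vv$, which gives both assertions of the proposition.
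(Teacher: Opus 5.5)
Your proposal does not contain a proof. The step you label the main obstacle is the whole content of the proposition, and nothing in the proposal carries it out.

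The reductions you do make are fine, and in fact simpler than you suggest. Every $\mathcal Z_K(\vc)$ is a subset of the finite set $(K-K)\cap\Lambda$, so pigeonhole alone, with no accumulation or semicontinuity, shows that infinitely many exposed points would give distinct exposed points $\vv\ne\vw$ with $\mathcal Z_K(\vv)=\mathcal Z_K(\vw)$. Likewise, your claim that $A_Z=\{\vv\}$ for $Z=\mathcal Z_K(\vv)$ is true. Once it is known, $\vv\mapsto\mathcal Z_K(\vv)$ is injective on exposed points, so the ``finiteness modulo the combinatorics'' you defer is immediate.

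But ``applying Lemma~\ref{lem:exposed-cap}'' does not prove that claim. The lemma only supplies singly covered points of $K$ near $\vv$. You need a mechanism showing that the existence of $\vw$ forces \emph{every} point of $K$ near $\vv$ into a translate $K-\vz$ with $\vz\ne\vnull$. The detour through $\conv\bigcup_{\vc\in K}(\vc+P_{\vc})$ is vacuous, as you note yourself. The ``vertex of the arrangement'' criterion has no meaning before polytopality is known.

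The missing idea is a ray argument.
\begin{itemize}
\item Let $f$ expose $\vv$, with $H=\{f=h\}$ and $K\cap H=\{\vv\}$. Let $\vw\in K\setminus\{\vv\}$ satisfy $\mathcal Z_K(\vv)\subseteq\mathcal Z_K(\vw)$.
\item By local finiteness, pick $r>0$ such that every lattice translate $K-\vz$ meeting $B(\vv,r)$ contains $\vv$.
\item For $\mathbf p\in K$ near $\vv$, extend the ray from $\vw$ through $\mathbf p$ to the point $\mathbf q_{\mathbf p}\in H$. Then $\mathbf p\in[\vw,\mathbf q_{\mathbf p}]$, and $\mathbf q_{\mathbf p}\to\vv$ as $\mathbf p\to\vv$.
\item If $\mathbf q_{\mathbf p}\ne\vv$, then $\mathbf q_{\mathbf p}\notin K$, so it lies in some $K-\vz$ with $\vz\ne\vnull$. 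The choice of $r$ gives $\vz\in\mathcal Z_K(\vv)\subseteq\mathcal Z_K(\vw)$, hence $\vw\in K-\vz$. Convexity of $K-\vz$ then gives $\mathbf p\in K-\vz$.
\item If $\mathbf q_{\mathbf p}=\vv$, use one fixed nonzero $\vz_0\in\mathcal Z_K(\vv)$. It is obtained by covering a point just beyond $\vv$ outside $K$, inside $B(\vv,r)$. Then $[\vw,\vv]\subset K-\vz_0$, so again $\mathbf p$ lies in a nonzero translate.
\end{itemize}
Thus a whole relative neighbourhood of $\vv$ in $K$ is covered by nonzero translates, contradicting Lemma~\ref{lem:exposed-cap}.

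This is exactly the ``removed cap is still covered'' step you were missing. The second point $\vw$, whose contact set contains that of $\vv$, is what carries coverage of points outside $K$ back into the cap. With this step, Straszewicz's theorem and your final paragraph complete the proof as in the paper.
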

\begin{proof}
We show that different exposed points have different contact sets.
Since
\begin{equation}\label{eq:finite-contact-universe}
                \mathcal Z_K(\vc)\subset (K-K)\cap\Lambda
                \qquad(\vc\in K),
\end{equation}
and the right-hand side is finite, this will bound the number of
exposed points.

Suppose that distinct exposed points $\vv,\vw$ have the same contact set
$S$.  Choose a nonzero vector $\va$ such that
\[
 f(\vx):=\langle \va,\vx\rangle\le h:=f(\vv)\quad(\vx\in K),
 \qquad K\cap H=\{\vv\},\quad H:=\{f=h\}.
\]
In particular, $f(\vw)<h$.  Only finitely many lattice translates of
$K$ meet $B(\vv,1)$.  Each of these translates that does not contain $\vv$
has positive distance from $\vv$.  Consequently, there is $r>0$ such
that
\begin{equation}\label{eq:nearby-contact-translates}
 (K-\vz)\cap B(\vv,r)\ne\varnothing\ \Longrightarrow\ \vv\in K-\vz
 \qquad(\vz\in\Lambda).
\end{equation}

Choose $\tau>0$ with $\tau\|\va\|<r$.  Since $\vv+\tau\va\notin K$,
the covering property gives a nonzero $\vz_0\in\Lambda$ such that
$\vv+\tau\va\in K-\vz_0$.  By \eqref{eq:nearby-contact-translates},
$\vz_0\in S$, so both $\vv$ and $\vw$ belong to $K-\vz_0$.

For $\mathbf{p}\in K$ sufficiently close to $\vv$, we have $f(\mathbf{p})>f(\vw)$.
The ray from $\vw$ through $\mathbf{p}$ meets $H$ at the point shown in
Figure~\ref{fig:cap-and-ray}(b), namely
\[
 \mathbf{q}_{\mathbf{p}}=\vw+\frac{h-f(\vw)}{f(\mathbf{p})-f(\vw)}(\mathbf{p}-\vw).
\]
The denominator is positive, and the coefficient is at least $1$.
Thus $\mathbf{p}\in[\vw,\mathbf{q}_{\mathbf{p}}]$.  As $\mathbf{p}\to \vv$, we have $\mathbf{q}_{\mathbf{p}}\to \vv$; hence one
open neighborhood $U$ of $\vv$ can be chosen so that $f(\mathbf{p})>f(\vw)$ and
$\mathbf{q}_{\mathbf{p}}\in B(\vv,r)$ for every $\mathbf{p}\in K\cap U$.  We consider the two
possible positions of $\mathbf{q}_{\mathbf{p}}$.

If $\mathbf{q}_{\mathbf{p}}\ne \vv$, then $\mathbf{q}_{\mathbf{p}}\notin K$, because $K\cap H=\{\vv\}$.
Choose a translate $K-\vz$ containing $\mathbf{q}_{\mathbf{p}}$.  Here $\vz\ne\vnull$, and
\eqref{eq:nearby-contact-translates} gives $\vz\in S$.  Since $S$ is
also the contact set at $\vw$, we have $\vw\in K-\vz$.  Convexity now gives
$\mathbf{p}\in[\vw,\mathbf{q}_{\mathbf{p}}]\subset K-\vz$.

If $\mathbf{q}_{\mathbf{p}}=\vv$, use the fixed translate $K-\vz_0$: it contains $[\vw,\vv]$,
and hence contains $\mathbf{p}$.

In both cases, every $\mathbf{p}\in K\cap U$ belongs to a translate $K-\vz$
with $\vz\ne\vnull$.  This contradicts Lemma~\ref{lem:exposed-cap}.  Thus exposed
points have distinct contact sets and, by
\eqref{eq:finite-contact-universe}, there are only finitely many of
them.  By Straszewicz's theorem, $K$ is the closed convex hull of its
exposed points~\cite[Theorem~1.4.7]{Schneider}; therefore $K$ is a
polytope.

Let $V(K)$ denote its vertex set.  For every $\vv\in V(K)$,
$\vv+P_{\vv}\subset K$ and $\vv\in \vv+P_{\vv}$, so
\[
                 K=\conv\bigcup_{\vv\in V(K)}(\vv+P_{\vv}).
\]
This proves the final assertion.
\end{proof}

\begin{figure}[htbp]
\centering
\begin{tikzpicture}[scale=1.18, every node/.style={font=\small}]
 \begin{scope}
  \path[fill=gray!8] (0,0.5)--(0.8,-1)--(2,-0.6)--(3.4,0)
      --(2,1.1)--(0.4,1.5)--cycle;
  \begin{scope}
   \clip (0,0.5)--(0.8,-1)--(2,-0.6)--(3.4,0)
      --(2,1.1)--(0.4,1.5)--cycle;
   \fill[gray!40] (2.8,-1.3) rectangle (3.5,1.7);
  \end{scope}
  \draw[thick] (0,0.5)--(0.8,-1)--(2,-0.6)--(3.4,0)
      --(2,1.1)--(0.4,1.5)--cycle;
  \draw[densely dashed] (3.4,0) circle (0.82);
  \draw (2.8,-1.22)--(2.8,1.52);
  \draw (3.4,-1.22)--(3.4,1.52);
  \node[above] at (3.4,1.52) {$f=h$};
  \node[below,align=center] at (2.8,-1.24) {$f=h-\delta$};
  \node at (1.1,0.8) {$K'$};
  \node[above right] at (3.0,0.2) {$C_\delta$};
  \fill (3.4,0) circle (1.5pt);
  \node[right] at (3.4,0) {$\vv$};
  \fill (3.05,0.03) circle (1.5pt);
  \node[below] at (3.05,0.03) {$\mathbf p$};
  \fill (1.3,0.03) circle (1.5pt);
  \node[below] at (1.3,0.03) {$\mathbf p+\vz$};
  \draw[->,thick] (2.96,0.03)--(1.38,0.03)
      node[midway,above] {$\vz$};
  \node[align=center] at (1.8,-1.9) {(a) Removing a small cap};
 \end{scope}
 \begin{scope}[xshift=5.2cm]
  \fill[gray!8] (0,0)--(1,-1.1)--(2.6,-0.6)--(3.4,0.9)
      --(1.3,1.45)--cycle;
  \fill[gray!35] (0,0)--(3.4,0.9)--(3.4,0.388571)--cycle;
  \draw[thick] (0,0)--(1,-1.1)--(2.6,-0.6)--(3.4,0.9)
      --(1.3,1.45)--cycle;
  \draw (3.4,-1.22)--(3.4,1.55);
  \node[above] at (3.4,1.55) {$H$};
  \draw[densely dashed] (0,0)--(3.4,0.9);
  \draw[->,thick] (0,0)--(3.72,0.425143);
  \fill (0,0) circle (1.5pt);
  \node[left] at (0,0) {$\vw$};
  \fill (3.4,0.9) circle (1.5pt);
  \node[right] at (3.4,0.9) {$\vv$};
  \fill (2.8,0.32) circle (1.5pt);
  \node[below] at (2.8,0.32) {$\mathbf p$};
  \fill (3.4,0.388571) circle (1.5pt);
  \node[below right] at (3.4,0.388571) {$\mathbf q_{\mathbf p}$};
  \node at (1.3,-0.5) {$K$};
  \node[align=center] at (1.8,-1.9) {(b) Equal contact sets at $\vv,\vw$};
 \end{scope}
\end{tikzpicture}
\caption{The two local steps in the polytopality proof, shown
schematically in a plane. In (a), the cap lies in a ball whose diameter
is smaller than the length of every nonzero lattice vector; an alternative
representative $\mathbf p+\vz\in K$ therefore belongs to $K'$.
In (b), the ray from $\vw$ through $\mathbf p$ meets the exposing
hyperplane at $\mathbf q_{\mathbf p}$.
If $\mathbf q_{\mathbf p}\ne\vv$, the shaded triangle
$\conv\{\vw,\vv,\mathbf q_{\mathbf p}\}$ lies in a translate
$K-\vz$ containing both endpoints of the ray segment.
If $\mathbf q_{\mathbf p}=\vv$, the dashed segment is contained in
the fixed translate $K-\vz_0$.}
\label{fig:cap-and-ray}
\end{figure}

The contact-set separation used here also follows from
\cite[Lemma~2.3]{Codenotti}.

The contact polytopes also suggest a boundary restriction.  If $\vv$
is a vertex of $K$, Lemma~\ref{lem:exposed-cap} gives
\[
                         K\cap(\vv+\Lambda)\subset\bd K:
\]
indeed, $\vv+\vz\in\inte K$ for a nonzero $\vz\in\Lambda$ would
put a whole neighborhood of $\vv$ in $K-\vz$.  Consequently, if a
translate $\mathbf t+Q\subset K$ of a lattice polytope has a vertex
that is also a vertex of $K$, all vertices of $\mathbf t+Q$ lie in
$\bd K$, since they are in the same lattice coset.  We ask whether
this conclusion holds without the common-vertex assumption.
\begin{conjecture}\label{conj:embedded-lattice-polytope-boundary}
Let $K$ be a minimal covering body with respect to $\Lambda$, and let
$Q$ be a full-dimensional $\Lambda$-lattice polytope.  If $\mathbf{t}+Q\subset K$
for a vector $\mathbf{t}\in\Rr^n$, then $\mathbf{t}+V(Q)\subset\bd K$.
\end{conjecture}

An affirmative answer would restrict the placement of a full-dimensional
lattice polytope in $K$ to positions with all vertices on boundary
faces.  For an embedded lattice simplex, for example, assigning its
vertices to faces of $K$ would give supporting-hyperplane equalities
that its placement must satisfy.  The results below use only the
common-vertex case proved above.

We next make precise the arithmetic information used to distinguish
our examples.  Write $\mathcal F(K)$ for the face lattice of $K$,
including $\varnothing$ and $K$.  For $\mathbf{p}\in K$, its
\textbf{face carrier} $F_K(\mathbf{p})$ is the unique face of $K$ whose relative
interior contains $\mathbf{p}$.  For $\vv\in V(K)$, define
\[
 \iota_{\vv}:\mathcal Z_K(\vv)\longrightarrow\mathcal F(K),
 \qquad \iota_{\vv}(\vz)=F_K(\vv+\vz).
\]
This map records the face containing each contact point, in addition to
its lattice displacement from $\vv$.

\begin{definition}[Arithmetic contact type]\label{def:type}
The \textbf{arithmetic contact profile} of a minimal covering pair
$(K,\Lambda)$ consists of its face lattice $\mathcal F(K)$, the lattice
$\Lambda$, and the rooted contact data
$\{(\mathcal Z_K(\vv),\vnull,\iota_{\vv}):\vv\in V(K)\}$.

For another minimal covering pair $(K',\Lambda')$, a
\textbf{profile isomorphism} consists of a lattice isomorphism
$U:\Lambda\to\Lambda'$, a vertex bijection
$\phi:V(K)\to V(K')$, and a face-lattice isomorphism
$\Phi:\mathcal F(K)\to\mathcal F(K')$, satisfying
\begin{align}
 \Phi(\{\vv\})&=\{\phi(\vv)\},
       &&\vv\in V(K),\label{eq:contact-vertex-equivalence}\\
 U\mathcal Z_K(\vv)&=\mathcal Z_{K'}(\phi(\vv)),
       &&\vv\in V(K),\label{eq:contact-type-equivalence}\\
 \Phi\bigl(\iota_{\vv}(\vz)\bigr)&=\iota'_{\phi(\vv)}(U\vz),
       &&\vv\in V(K),\ \vz\in\mathcal Z_K(\vv).
       \label{eq:contact-carrier-equivalence}
\end{align}
The first identity matches the singleton faces with the vertex bijection.
Here $\iota'$ denotes the face-carrier maps for $K'$.
The pairs have the same \textbf{arithmetic contact type} if such an
isomorphism exists.
\end{definition}

Identity maps, inverses, and composition show that this defines an
equivalence relation.  The map $U$ is the same at every vertex, and we
also use $U$ for its unique linear extension to $\Rr^n$.
Ordinary combinatorial type records only the face lattice; arithmetic
contact type additionally records lattice contacts and the faces on
which they occur.  In particular, changing an arbitrary family of
translated polytopes used to generate $K$ does not change this type.

An affine map $\vx\mapsto A\vx+\mathbf{t}$ with $A\Lambda=\Lambda'$ induces a
profile isomorphism, since
\[
 \mathcal Z_{AK+\mathbf{t}}(A\vv+\mathbf{t})=A\mathcal Z_K(\vv),\qquad
 F_{AK+\mathbf{t}}(A\mathbf{p}+\mathbf{t})=AF_K(\mathbf{p})+\mathbf{t}.
\]
Conversely, suppose that all vertices of $K$ lie in one coset of
$\Lambda$.  Then, for every $\vv\in V(K)$,
\begin{equation}\label{eq:single-coset-contact-polytope}
                              P_{\vv}=K-\vv.
\end{equation}
Indeed, each vertex $\vw$ has $\vw-\vv\in\mathcal Z_K(\vv)$, while
$P_{\vv}\subset K-\vv$.  If a profile isomorphism from $(K,\Lambda)$ to
$(K',\Lambda')$ is given, fix a vertex $\vv_*\in V(K)$.  Applying
\eqref{eq:contact-carrier-equivalence} to $\vz=\vw-\vv_*$ gives
\[
                   \phi(\vw)=\phi(\vv_*)+U(\vw-\vv_*)
                   \qquad(\vw\in V(K)).
\]
Thus $\vx\mapsto U(\vx-\vv_*)+\phi(\vv_*)$ maps $K$ onto $K'$.  This explains
why the type determines the lattice-affine equivalence class for the
families below, whose vertices lie in one lattice coset.  When the
lattice is fixed, $|\det U|=1$; hence the ordinary volumes of
corresponding root contact polytopes are equal.

We distinguish this intrinsic type from the Kuhn construction in
Theorem~\ref{thm:kuhn_cover}.  Recall the Kuhn simplices
$\Delta_\sigma(B)$ associated with an ordered lattice basis $B$.
A \textbf{complete translated Kuhn family} consists of one translate
$\mathbf{t}_\sigma+\Delta_\sigma(B)$ for each $\sigma\in S_3$, with arbitrary
$\mathbf{t}_\sigma\in\Rr^3$.  If $K$ contains such a family, then
Theorem~\ref{thm:kuhn_cover} gives $K+\Lambda=\Rr^3$.  Its six
simplices need not meet face to face inside $K$.

\subsection{Boundary Constraints}

\begin{definition}[Supporting cone]\label{def:cone}
Let $K$ be a convex body and $\vx\in\bd K$.  Its \textbf{supporting cone}
(or tangent cone) at $\vx$ is
\[
 \mathrm{cone}_K(\vx)=\operatorname{cl}
       \{\alpha(\vy-\vx):\vy\in K,\ \alpha\ge0\}.
\]
Thus the cone consists of the directions from $\vx$ into $K$ and their
limits.
\end{definition}

Every vertex of a polytope is exposed.  Hence Lemma~\ref{lem:exposed-cap}
implies that, for a vertex $\vx$ of a minimal covering body,
$\vx+\vz\notin\inte K$ whenever $\vnull\ne \vz\in\Lambda$.  If the contact is
on the boundary, we have the following stronger restriction.

The following condition also follows from
\cite[Corollary~2.5, equation~(2.7)]{Codenotti}, which excludes
containment in the union of the cones at the other lattice contacts.
We give a direct proof for one contact.

\begin{lemma}[Vertex condition]\label{lem:vertex condition}
Let $K$ be a minimal covering body with respect to $\Lambda$.  If $\vx$
is a vertex of $K$ and $\vx+\vz\in\bd K$ for
$\vz\in\Lambda\setminus\{\vnull\}$, then
\[
                    \mathrm{cone}_K(\vx)
                    \not\subseteq\mathrm{cone}_K(\vx+\vz).
\]
\end{lemma}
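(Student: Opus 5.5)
Argue by contradiction: assume $\mathrm{cone}_K(\vx)\subseteq\mathrm{cone}_K(\vx+\vz)$ and show that a small neighbourhood of $\vx$ in $K$ is covered by the single translate $K-\vz$. This contradicts Lemma~\ref{lem:exposed-cap}, since every vertex of the polytope $K$ (Proposition~\ref{prop:finiteness}) is exposed.

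First, $K$ is a polytope, so near any boundary point it coincides with its supporting cone. Concretely, for $\vy\in\bd K$ there is $\rho>0$ with
\[
 K\cap B(\vy,\rho)=\bigl(\vy+\mathrm{cone}_K(\vy)\bigr)\cap B(\vy,\rho).
\]
To see this, write $K=\bigcap_j\{f_j\le h_j\}$ as a finite intersection of halfspaces. Only the constraints active at $\vy$ matter locally, and these same constraints cut out $\vy+\mathrm{cone}_K(\vy)$. Apply this at $\vy=\vx$ and at $\vy=\vx+\vz$, and let $\rho$ be the minimum of the two radii.

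Next, take $\mathbf p\in K\cap B(\vx,\rho)$. Then $\mathbf p-\vx\in\mathrm{cone}_K(\vx)\subseteq\mathrm{cone}_K(\vx+\vz)$. Since $\|(\mathbf p+\vz)-(\vx+\vz)\|=\|\mathbf p-\vx\|<\rho$, the local description at $\vx+\vz$ gives $\mathbf p+\vz\in K$, that is, $\mathbf p\in K-\vz$. Hence every point of $K\cap B(\vx,\rho)$ lies in $K-\vz$ with $\vz\ne\vnull$. This contradicts Lemma~\ref{lem:exposed-cap} applied with $U=B(\vx,\rho)$.

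The only step needing care is the local cone description of a polytope near a boundary point. I would state it explicitly through the finite halfspace representation. The radius is chosen smaller than the distance from $\vy$ to every facet hyperplane not passing through $\vy$, so that all inactive constraints hold automatically inside the ball. The key property is that the tangent cone of a polytope is closed and equal to the cone of its active constraints, so no limit issue arises. The remaining steps are immediate.
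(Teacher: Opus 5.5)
Your proof is correct and follows the paper's argument: assuming the inclusion, the local cone description of the polytope at $\vx+\vz$ puts every point of $K$ near $\vx$ into the single translate $K-\vz$. The only difference is the last step. You finish by citing Lemma~\ref{lem:exposed-cap}, whereas the paper repeats that truncation argument inline, cutting off a small cap at $\vx$ with $2\varepsilon<\|\vz\|$. Also, the local description is needed only at $\vx+\vz$, since $\mathbf p-\vx\in\mathrm{cone}_K(\vx)$ holds for every $\mathbf p\in K$ directly from the definition.
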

\begin{proof}
Suppose that the inclusion holds.  By Proposition~\ref{prop:finiteness},
$K$ is a polytope. Choose $\varepsilon>0$ so small that $K$ agrees
with its translated supporting cone in the $\varepsilon$-neighborhoods
of both $\vx$ and $\vx+\vz$. Then
\[
 K\cap B(\vx,\varepsilon)
  =(\vx+\mathrm{cone}_K(\vx))\cap B(\vx,\varepsilon)
  \subset (K-\vz)\cap B(\vx,\varepsilon).
\]
We may also require $2\varepsilon<\|\vz\|$.  Truncate the vertex $\vx$
by a hyperplane so close to it that the remaining full-dimensional
polytope $K'\subsetneq K$ satisfies
$K\setminus K'\subset B(\vx,\varepsilon)$.  For every removed point $\mathbf{p}$,
the displayed inclusion gives $\mathbf{p}+\vz\in K$.  Since
$\mathbf{p}+\vz\in B(\vx+\vz,\varepsilon)$ and this ball is disjoint from
$B(\vx,\varepsilon)$, the point $\mathbf{p}+\vz$ was not removed.  Thus
$K\setminus K'\subset K'-\vz$, and
\[
 \Rr^n=K+\Lambda
       =\bigl(K'\cup(K\setminus K')\bigr)+\Lambda
       =(K'+\Lambda)\cup\bigl((K\setminus K')+\Lambda\bigr)
       =K'+\Lambda.
\]
This contradicts minimality.
\end{proof}

\begin{proposition}[Boundary lattice segments]
\label{prop:no-long-lattice-segment}
Let $K$ be a minimal covering body with respect to a lattice
$\Lambda\subset\Rr^3$.  If $\vx\in\Rr^3$,
$\vz\in\Lambda\setminus\{\vnull\}$, and $t>1$, then
$[\vx,\vx+t\vz]$ is not contained in $\bd K$.
\end{proposition}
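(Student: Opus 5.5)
We argue by contradiction and reduce to the vertex condition, Lemma~\ref{lem:vertex condition}. By Proposition~\ref{prop:finiteness}, $K$ is a polytope. Suppose $[\vx,\vx+t\vz]\subset\bd K$ with $t>1$. Take a supporting hyperplane $H$ of $K$ at the midpoint of this segment. Since the midpoint is a relative interior point of a segment lying in $K$, the whole segment lies in $H$. Hence it lies in the proper face $F=K\cap H$, which is a polygon or an edge.

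\textbf{Step 1: move the segment to a vertex.} Consider the chords of $F$ parallel to $\vz$. Their lengths, as a function of the chord's position in the projection of $F$ along $\vz$, form a concave function on a polytope. Its maximum is at least $t\|\vz\|$, and it is attained on a chord having a vertex $\vv$ of $F$ as an endpoint. To see this, take a maximal chord and slide it within the face of the projection on which the length is maximal until it reaches a vertex; this is routine for polytopes. Replacing $\vz$ by $-\vz$ if necessary, we get $[\vv,\vv+t'\vz]\subset F\subset\bd K$ with $t'\ge t>1$. Here $\vv$ is a vertex of $F$, hence a vertex of $K$, and $\vz\in\Lambda\setminus\{\vnull\}$.

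\textbf{Step 2: compare cones.} Put $\mathbf p=\vv+\vz$. Because $0<1<t'$, the point $\mathbf p$ lies in the relative interior of the segment $[\vv,\vv+t'\vz]\subset K$. So $\mathbf p\in\bd K$ and $\mathbf p$ is not a vertex of $K$. For the polytope $K$, the cone $\mathrm{cone}_K(\mathbf p)$ is the intersection of the half-spaces $\{\vy:\langle\mathbf n_G,\vy\rangle\le0\}$ over the facets $G$ of $K$ containing $\mathbf p$, where $\mathbf n_G$ is the outer normal of $G$. Each such facet hyperplane supports $K$ at the relative interior point $\mathbf p$ of the segment, so it contains the whole segment, and in particular contains $\vv$. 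Hence $K-\vv$ lies in every one of these half-spaces. Taking closures of positive hulls gives
\[
 \mathrm{cone}_K(\vv)\subseteq\mathrm{cone}_K(\vv+\vz).
\]
This contradicts Lemma~\ref{lem:vertex condition} applied to the vertex $\vv$ and the lattice vector $\vz$.

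The main obstacle is Step~1: we must produce a \emph{vertex} of $K$ from which a lattice-direction chord of length exceeding $\|\vz\|$ emanates inside $\bd K$. The given segment may lie in the relative interior of a facet, far from any vertex. The concavity and maximal-chord argument handles this. If it turns out to be delicate, I would instead extend the segment maximally along its line within $F$. One endpoint then lies in the relative boundary of $F$, on an edge $E$. From there one can continue inside $F$ toward a vertex of $E$, using the convexity of $F$ to keep a parallel chord of length at least $t\|\vz\|$. This uses that $F$ is at most two-dimensional, which is where the hypothesis $n=3$ makes the reduction elementary. Step~2 itself works in any dimension.
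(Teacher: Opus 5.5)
Your proposal is correct and follows essentially the same route as the paper: a supporting plane at the midpoint places the segment in a polygonal face, concavity of the chord length parallel to $\vz$ moves the chord to one with a vertex endpoint, and the facet-hyperplane comparison gives $\mathrm{cone}_K(\vv)\subseteq\mathrm{cone}_K(\vv+\vz)$, contradicting Lemma~\ref{lem:vertex condition}. The step you call routine needs one justification, which the paper supplies: at an endpoint $r_0$ of the interval of maximizers of $L$, one of the two boundary functions must change its affine formula, since otherwise $L$ would be affine near $r_0$ with a local maximum there, hence constant, extending the interval. This depends on $F$ being two-dimensional (in a three-dimensional polytope a longest chord in a fixed direction can join the relative interiors of two skew edges), so ``routine for polytopes'' overstates it, though your own aside correctly points to the dimension restriction.
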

\begin{proof}
Suppose that the segment is contained in $\bd K$.  A supporting plane
at its midpoint contains the whole segment.  The face of $K$ in this
plane is contained in a facet $F$, which is a convex polygon by
Proposition~\ref{prop:finiteness}.

Choose affine coordinates $(s,r)$ in the plane of $F$ so that an
increase of $1$ in $s$ corresponds to translation by $\vz$.  The
projection of $F$ onto the $r$-axis is a closed interval $I$.  For
a fixed $r\in I$, its nonempty horizontal section is
\[
 F\cap\{(s,r):s\in\mathbb R\}
     =\{(s,r):a(r)\le s\le b(r)\}.
\]
Here $a$ is convex and piecewise affine, $b$ is concave and piecewise
affine, and $L(r):=b(r)-a(r)$ is the length of the section measured in
units of $\vz$.  Its maximum $L_{\max}$ is at least $t>1$.

A longest such section can be chosen with a vertex of $F$ as an
endpoint.  To see this, if the maximum occurs at an endpoint of $I$,
that section is an edge or a vertex of $F$.  Otherwise the set of
maximizers of the concave function $L$ is a closed interval inside
$I$.  Choose an endpoint $r_0$ of this interval, allowing an isolated
maximum.  If neither $a$ nor $b$ changed its affine formula at $r_0$,
then $L$ would be affine on a neighborhood of $r_0$.  A local maximum
would force that affine function to be constant, extending the
maximizer interval past $r_0$, a contradiction.  Thus at least one
section endpoint is a vertex of $F$.

After reversing $\vz$ if necessary, the chosen chord has the form
$[\vv,\vv+L_{\max}\vz]$, where $\vv$ is a vertex of $F$ and hence of $K$.
The point $\vv+\vz$ lies in its relative interior.  Therefore every
supporting hyperplane of $K$ at $\vv+\vz$ contains the direction $\vz$ and
also contains $\vv$.  The corresponding supporting half-spaces give
\[
                   \mathrm{cone}_K(\vv)
                   \subseteq\mathrm{cone}_K(\vv+\vz),
\]
contrary to Lemma~\ref{lem:vertex condition}.
\end{proof}

The reduction to a vertex above uses that a facet in three dimensions
is a polygon. Once the segment starts at a vertex, the
supporting-hyperplane argument applies in every dimension and gives
the following consequence of Lemma~\ref{lem:vertex condition}.
\begin{proposition}\label{prop:vertex-ended-lattice-segment}
Let $K$ be a minimal covering body with respect to a lattice
$\Lambda\subset\Rr^n$.  If $\vv$ is a vertex of $K$,
$\vz\in\Lambda\setminus\{\vnull\}$, and $t>1$, then
$[\vv,\vv+t\vz]$ is not contained in $\bd K$.
\end{proposition}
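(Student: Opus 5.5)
The plan is to reduce directly to Lemma~\ref{lem:vertex condition}, exactly as in the final step of the proof of Proposition~\ref{prop:no-long-lattice-segment}. The reduction to a vertex, which used that facets are polygons, is no longer needed, because the segment already starts at a vertex. Argue by contradiction and suppose that $[\vv,\vv+t\vz]\subset\bd K$ with $t>1$. By Proposition~\ref{prop:finiteness}, $K$ is a polytope. Put $\vy=\vv+\vz$. Since $0<1<t$, the point $\vy$ lies in the relative interior of the segment, and $\vy\in\bd K$.

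The key step is to show $\mathrm{cone}_K(\vv)\subseteq\mathrm{cone}_K(\vy)$. For a polytope, $\mathrm{cone}_K(\vy)$ is the intersection of the half-spaces $\{\vx:\langle\va,\vx\rangle\le0\}$ over all outer normals $\va$ of supporting hyperplanes $H=\{\langle\va,\cdot\rangle=\langle\va,\vy\rangle\}$ of $K$ at $\vy$; equivalently, it is the intersection over the facets containing $\vy$. Take such an $\va$. The linear function $\langle\va,\cdot\rangle$ is at most $\langle\va,\vy\rangle$ on the segment $[\vv,\vv+t\vz]\subset K$, and it attains this value at the interior point $\vy$. An affine function on a segment that is maximized at an interior point is constant there. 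Hence $\langle\va,\vz\rangle=0$ and $\langle\va,\vv\rangle=\langle\va,\vy\rangle$, so $H$ also supports $K$ at $\vv$.

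It follows that for every $\vw\in K$ we have $\langle\va,\vw-\vv\rangle=\langle\va,\vw\rangle-\langle\va,\vy\rangle\le0$. Taking nonnegative multiples and closures, every element of $\mathrm{cone}_K(\vv)$ lies in $\{\langle\va,\cdot\rangle\le0\}$. Intersecting over all such $\va$ gives $\mathrm{cone}_K(\vv)\subseteq\mathrm{cone}_K(\vy)=\mathrm{cone}_K(\vv+\vz)$. Since $\vv$ is a vertex, $\vz\in\Lambda\setminus\{\vnull\}$, and $\vv+\vz\in\bd K$, this contradicts Lemma~\ref{lem:vertex condition}.

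The only point needing care is the description of the supporting cone at a boundary point as the intersection of its supporting half-spaces. This is standard for polytopes: near $\vy$, $K$ coincides with the intersection of the facet half-spaces active at $\vy$. It is the same fact used implicitly in Proposition~\ref{prop:no-long-lattice-segment}, so I expect no real obstacle. The argument uses no dimension-specific input, which explains why the statement holds in $\Rr^n$.
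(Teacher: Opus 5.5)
Your proof is correct and is essentially the paper's argument. The paper gets this proposition from the final supporting-hyperplane step of Proposition~\ref{prop:no-long-lattice-segment}: every supporting hyperplane at the relative-interior point $\vv+\vz$ contains $\vv$, so $\mathrm{cone}_K(\vv)\subseteq\mathrm{cone}_K(\vv+\vz)$, contradicting Lemma~\ref{lem:vertex condition}. Your version only spells out the standard fact that the supporting cone is the intersection of the supporting half-spaces at that point.
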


\subsection{The Symmetric Case in Dimension Three}

In the plane, every lattice covering body contains a centrally symmetric
hexagon or a parallelogram that tiles by the same lattice.  In three
dimensions, Fedorov's five types are the parallelotope, hexagonal prism,
rhombic dodecahedron, elongated dodecahedron, and truncated
octahedron~\cite{Federov}.  The following proposition places all five
within the construction of Theorem~\ref{thm:kuhn_cover}.

For a parallelohedron $P$, write $\Lambda(P)$ for the translation
lattice of its face-to-face tiling: the intersection of any two tiles
is a face of both.  This tiling is determined by $P$ up to an overall
translation~\cite[Section~1]{GarberGavrilyukMagazinov2015}.

\begin{proposition}[Kuhn representation of three-dimensional parallelohedra]
\label{prop:fedorov-kuhn}\label{rem:kuhn_reduced}
Let $P\subset\Rr^3$ be a parallelohedron.  There are an ordered basis
$B$ of $\Lambda(P)$ and vectors $\mathbf t_\sigma\in\Rr^3$,
$\sigma\in S_3$, such that
\begin{equation}\label{eq:fedorov-kuhn-hull}
       P=\conv\bigcup_{\sigma\in S_3}
                    \bigl(\mathbf t_\sigma+\Delta_\sigma(B)\bigr).
\end{equation}
\end{proposition}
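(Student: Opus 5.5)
The plan is to reduce to a single generic case and then pass to degenerations. I will use the classical fact that in dimension three every parallelohedron is a zonotope; Voronoi's conjecture holds there, and all three-dimensional Voronoi cells are zonotopes. Its generators can be taken from the six-vector unimodular system attached to $K_4$. Concretely, up to a linear map I will write
\[
 P=P(\lambda)=\sum_{1\le i<j\le4}\lambda_{ij}\,[\mathbf0,\ve_i-\ve_j]
\]
in the hyperplane $\{x_1+x_2+x_3+x_4=0\}$ of $\Rr^4$, with $\lambda_{ij}\ge0$. When all $\lambda_{ij}>0$ this is the truncated octahedron, and the choice $\lambda\equiv1$ gives the permutahedron $\Pi_3$. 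Setting some $\lambda_{ij}=0$ gives the other four Fedorov types, namely the elongated dodecahedron, rhombic dodecahedron, hexagonal prism and parallelotope. Since the statement is invariant under linear maps carrying $\Lambda(P)$ to $\Lambda(AP)$ and $B$ to $AB$, it suffices to treat the family $P(\lambda)$ with a suitable lattice basis.

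First I compute $\Lambda(P(\lambda))$. For a zonotope parallelohedron, the face-to-face tiling lattice is generated by the facet vectors, i.e.\ the vectors joining centres of opposite facets, and these depend linearly on $\lambda$. For $\Pi_3$ the lattice is spanned by vectors of the form $4\ve_i-(1,1,1,1)$, together with the vectors joining opposite hexagons and opposite squares. I will choose an ordered basis $B(\lambda)=(\vb_1,\vb_2,\vb_3)$ whose entries are linear in $\lambda$. A natural choice is three vectors joining centres of hexagonal facets that meet cyclically, so that $\vb_1+\vb_2+\vb_3$ is again a facet vector, between opposite squares or hexagons.

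The main step, and the expected obstacle, is a combinatorial one for the generic truncated octahedron. Its $24$ vertices must be partitioned into six chains $(\vw,\vw+\vb_{\sigma(1)},\vw+\vb_{\sigma(1)}+\vb_{\sigma(2)},\vw+\vb_1+\vb_2+\vb_3)$ of vertices, one chain for each $\sigma\in S_3$. Each vertex of $P(\lambda)$ has the form $\sum_{i<j}\lambda_{ij}\epsilon_{ij}(\ve_i-\ve_j)$ for a sign vector determined by a total order of $\{1,2,3,4\}$. So I will look for a rule that, for each $\sigma$, acts on these orders and translates the vertex by the prescribed partial sums. The difference of the two vertices is then a lattice vector identically in $\lambda$. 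I will first find the chains by hand for $\Pi_3$: $6\cdot4=24$ points matches the vertex count, so this is an exact partition, and a chain $\va\to\vb\to\vc\to\vg$ through lattice-equivalent vertices is the natural analogue of the cube's Kuhn triangulation. I will then check that the same order-theoretic rule gives lattice differences for all $\lambda$. Once the chains exist, set $\mathbf t_\sigma=\vw_\sigma$. Each simplex $\mathbf t_\sigma+\Delta_\sigma(B)$ has its vertices among the vertices of $P$, so it lies in $P$, and the union of the six simplices contains every vertex of $P$. This gives \eqref{eq:fedorov-kuhn-hull}.

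Finally, for the degenerate types I set the relevant $\lambda_{ij}=0$. The vertex formulas, $B(\lambda)$ and the chain identities are all linear, so the six simplices still lie in $P(\lambda)$ and their vertices still exhaust $V(P(\lambda))$; several chain points merely coincide. The remaining check is that $B(\lambda)$ is still a basis of the face-to-face tiling lattice in the degenerate cases. Facets disappear there, but the surviving facet vectors remain the same linear expressions, so I will verify $\det B(\lambda)=\vol P(\lambda)\neq0$ directly. This confirms that $B(\lambda)$ generates $\Lambda(P(\lambda))$, because the lattice has covolume $\vol P$.
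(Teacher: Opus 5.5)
Your proposal is correct, and it takes a genuinely different route from the paper.

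\textbf{The paper's route.} It never uses the zonotope structure. It obtains an obtuse superbase by Selling's reduction and shows via Selling's identity that each $\Delta_\sigma(B)$ has a circumsphere with no lattice point in its interior. It then translates each simplex by minus its circumcentre into the Voronoi cell $V$. Finally it gets $H=V$ from Theorem~\ref{thm:kuhn_cover} together with $\vol(H)\ge\det\Gamma=\vol(V)$, and transfers the result to $P$ by Delone's theorem.

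\textbf{Your route, with the open step closed.} You match vertices directly, and the step you flag as the obstacle does go through. Put $\mathbf f_k=\sum_{i\ne k}\lambda_{ik}(\ve_k-\ve_i)$. Then $\mathbf f_1+\dots+\mathbf f_4=\mathbf 0$, the hexagon facet vectors are $\pm\mathbf f_k$, and the square facet vectors are $\pm(\mathbf f_i+\mathbf f_j)$. List each order by decreasing value of the maximized functional. Moving the last element $k$ to the front changes the vertex by exactly $\mathbf f_k$, for every $\lambda$. So for $B=(\mathbf f_1,\mathbf f_2,\mathbf f_3)$, the chain for $\sigma$ consists of the four cyclic rotations of $(4,\sigma(3),\sigma(2),\sigma(1))$, and the six cyclic classes partition the $24$ orders.

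\textbf{Degenerate types.} Your specialization step is sound in the direction you need. Surviving facet vectors are still $\pm\mathbf f_l$ or $\pm(\mathbf f_i+\mathbf f_j)$, and facet vectors generate the face-to-face lattice. Hence $\Lambda(P(\lambda))\subseteq\mathbb Z\mathbf f_1+\mathbb Z\mathbf f_2+\mathbb Z\mathbf f_3$. The identity $\det(\mathbf f_1,\mathbf f_2,\mathbf f_3)=\vol P(\lambda)$ is the matrix-tree theorem: in the basis $\ve_i-\ve_4$ the matrix is the reduced weighted Laplacian. Both sides equal the weighted spanning-tree count of the support graph, so equal covolumes give $\Lambda(P(\lambda))=\mathbb Z B(\lambda)$.

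\textbf{What each approach buys.} Yours avoids the covering theorem and any volume comparison. It also shows explicitly which vertices of $P$ form each simplex, linearly in $\lambda$. But it rests on the heavier classical input that every three-dimensional parallelohedron is linearly a graphic zonotope of a subgraph of $K_4$. It also needs the separate specialization and lattice-identification step. The paper needs only Delone's theorem and a few lines of Selling reduction. It treats all five Fedorov types at once, since zero Selling parameters are allowed, and the covering theorem spares it from ever determining the vertex set of $V$. In the generic case your $\mathbf f_1,\dots,\mathbf f_4$ correspond to an obtuse superbase, so the two proofs in fact use the same basis.
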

\begin{proof}
We first prove the representation for a lattice Voronoi cell, then
transfer it to $P$ by affine equivalence. A suitable lattice basis makes
the circumsphere of each Kuhn simplex empty of lattice points in its
interior.  Translating each simplex by the negative of its circumcenter
then places it in the Voronoi cell.

Let $\Gamma\subset\Rr^3$ be a lattice.  Its Voronoi cell at the origin is
\[
 V(\Gamma)=\{\vx\in\Rr^3:\|\vx\|\le\|\vx-\boldsymbol\gamma\|
                       \text{ for all }\boldsymbol\gamma\in\Gamma\}.
\]
A \textbf{superbase} consists
of a lattice basis $\vb_1,\vb_2,\vb_3$ and the vector
$\vb_0=-\vb_1-\vb_2-\vb_3$.  It is \textbf{obtuse} if
$\vb_i\cdot\vb_j\le0$ for all distinct $i,j\in\{0,1,2,3\}$.
Every three-dimensional lattice has such a superbase
\cite[Theorem~8]{ConwaySloane1992}.  Here is the short reduction
argument.  Among all superbases choose one minimizing
$E=\sum_{i=0}^3\|\vb_i\|^2$; the minimum exists because there are
only finitely many choices below any fixed bound for $E$.
If $\vb_i\cdot\vb_j>0$, let $k,l$ be the other two indices and replace
\[
 \vb_i,\vb_j,\vb_k,\vb_l
 \quad\text{by}\quad
 -\vb_i,\vb_j,\vb_k+\vb_i,\vb_l+\vb_i.
\]
The new vectors still sum to zero.  The three vectors other than
$\vb_j$ are obtained from $\vb_i,\vb_k,\vb_l$ by a unimodular
change, so they form a basis of the same lattice.  The change in $E$ is
\[
       2\|\vb_i\|^2+2\vb_i\cdot(\vb_k+\vb_l)
                         =-2\vb_i\cdot\vb_j<0,
\]
a contradiction.  This is Selling's reduction step; see also
\cite[Appendix~A, Lemma~A.1]{Kurlin2022}.

Fix this superbase and put $p_{ij}=-\vb_i\cdot\vb_j\ge0$ for
$i\ne j$, with $p_{ij}=p_{ji}$.  If
$\vx=\sum_{i=1}^3m_i\vb_i$ and $m_0=0$, expansion gives Selling's
identity
\begin{equation}\label{eq:selling-quadratic-identity}
             \|\vx\|^2=
             \sum_{0\le i<j\le3}p_{ij}(m_i-m_j)^2.
\end{equation}
For $\sigma\in S_3$, denote the vertices of
$\Delta_\sigma(B)$, where $B=(\vb_1,\vb_2,\vb_3)$, by
\[
 \mathbf s_0=\vnull,\qquad
 \mathbf s_k=\sum_{j=1}^k\vb_{\sigma(j)}\quad(1\le k\le3).
\]
Their circumcenter $\vc_\sigma$ is the unique solution of
\begin{equation}\label{eq:kuhn-circumcenter-system}
              2\vc_\sigma\cdot\mathbf s_k=\|\mathbf s_k\|^2,
              \qquad 1\le k\le3.
\end{equation}
To verify that its circumsphere has no lattice point in its interior,
order the indices by
$\sigma(1)\prec\sigma(2)\prec\sigma(3)\prec0$ and set
\[
                 L_\sigma(\vx)=
                    \sum_{i\prec j}p_{ij}(m_i-m_j).
\]
At each $\mathbf s_k$, all the differences $m_i-m_j$ in this sum are
$0$ or $1$.  Hence \eqref{eq:selling-quadratic-identity} gives
$L_\sigma(\mathbf s_k)=\|\mathbf s_k\|^2$.
Since $\mathbf s_1,\mathbf s_2,\mathbf s_3$ are independent,
\eqref{eq:kuhn-circumcenter-system} implies
$L_\sigma(\vx)=2\vc_\sigma\cdot\vx$ for every $\vx\in\Rr^3$.
For $\vx\in\Gamma$, the numbers $m_i$ are integers, so
\begin{equation}\label{eq:kuhn-empty-sphere}
 \|\vx-\vc_\sigma\|^2-\|\vc_\sigma\|^2
   =\sum_{i\prec j}p_{ij}(m_i-m_j)(m_i-m_j-1)\ge0.
\end{equation}
Thus all four $\mathbf s_k$ are nearest lattice points to
$\vc_\sigma$.  The argument allows zero values of $p_{ij}$ and
additional lattice points on the boundary of a circumsphere.

Put $V=V(\Gamma)$.  For each $k$
and every $\boldsymbol\gamma\in\Gamma$, the nearest-point property
gives
\[
 \|\vc_\sigma-\mathbf s_k-\boldsymbol\gamma\|
       \ge\|\vc_\sigma-\mathbf s_k\|.
\]
Therefore $\vc_\sigma-\mathbf s_k\in V$.
Since $V=-V$, convexity yields
\[
                       \Delta_\sigma(B)-\vc_\sigma\subset V
                       \qquad(\sigma\in S_3).
\]
Put $H=\conv\bigcup_{\sigma\in S_3}
(\Delta_\sigma(B)-\vc_\sigma)$.
Theorem~\ref{thm:kuhn_cover} gives $H+\Gamma=\Rr^3$.
Integrating its covering multiplicity over a fundamental domain of
$\Gamma$ gives $\vol(H)\ge\det\Gamma$.
On the other hand, $H\subset V$ and $\vol(V)=\det\Gamma$.
Thus $\vol(H)=\vol(V)$, and the inclusion of full-dimensional compact
convex sets implies $H=V$.

Finally, a three-dimensional parallelohedron is affinely equivalent
to a Voronoi cell of a lattice; see
\cite{Delone1929} and \cite[Section~5]{GarberGavrilyukMagazinov2015}.
If $\vc_P$ is the center of $P$, choose an invertible linear map $A$
such that $A(P-\vc_P)=V$ for the Voronoi cell of a lattice $\Gamma$.
The image of the face-to-face tiling by $P$ is a face-to-face tiling
by $V$.  Uniqueness of that tiling, with the cell at the origin fixed,
gives $A\Lambda(P)=\Gamma$.
Pulling back the representation of $V$ proves
\eqref{eq:fedorov-kuhn-hull}, with basis and translations
\[
 (A^{-1}\vb_1,A^{-1}\vb_2,A^{-1}\vb_3),\qquad
 \mathbf t_\sigma=\vc_P-A^{-1}\vc_\sigma.
\]
\end{proof}

For an explicit instance of Proposition~\ref{prop:fedorov-kuhn},
consider the regular truncated octahedron
\[
 T=\conv\{(0,\pm1,\pm2),(0,\pm2,\pm1),
          (\pm1,0,\pm2),(\pm1,\pm2,0),
          (\pm2,0,\pm1),(\pm2,\pm1,0)\}.
\]
Set
\[
 \vb_1=(2,2,-2),\qquad \vb_2=(2,-2,2),\qquad \vb_3=(-2,2,2),
 \qquad \Lambda_T=\Z \vb_1+\Z \vb_2+\Z \vb_3.
\]
For the common basis $B=(\vb_1,\vb_2,\vb_3)$, define
$Q_j=\mathbf t_j+\Delta_{\sigma_j}(B)$ by the following table.
Here $ijk$ denotes the permutation with values $(i,j,k)$.
\[
\begin{array}{c|c|c}
 j&\sigma_j&\mathbf{t}_j\\ \hline
 1&132&(-1,-2,0)\\
 2&231&(-1,0,-2)\\
 3&312&(0,-2,-1)\\
 4&213&(-2,0,-1)\\
 5&123&(-2,-1,0)\\
 6&321&(0,-1,-2)
\end{array}
\]
For example,
\[
 Q_1=\conv\{(-1,-2,0),(1,0,-2),(-1,2,0),(1,0,2)\}.
\]
Computing the four vertices in each row gives precisely the displayed
24 vertices of $T$, each once. Hence $T=\conv\bigcup_{j=1}^6 Q_j$.

\section{Infinite Arithmetic Contact Types of Three-Dimensional Noncentrally Symmetric Minimal Covering Bodies}\label{sec:3-dim asy MBs}

We first construct a twelve-vertex noncentrally symmetric minimal covering body
that contains no complete family of translated Kuhn simplices.
We then prove Theorem~\ref{thm:infinite_asym} by an infinite family
of pairwise distinct arithmetic contact types.

\subsection{A Twelve-Vertex Minimal Covering Body}

Let $\tilde T=\conv\{\vw_1,\ldots,\vw_{12}\}$, where
\begin{align*}
 \vw_1&=(0,1,8),&\vw_2&=(1,0,8),&\vw_3&=(1,1,8),\\
 \vw_4&=(0,1,1),&\vw_5&=(1,0,1),&\vw_6&=(1,1,0),\\
 \vw_7&=(1,8,0),&\vw_8&=(1,8,1),&\vw_9&=(0,8,1),\\
 \vw_{10}&=(8,1,1),&\vw_{11}&=(8,1,0),&\vw_{12}&=(8,0,1).
\end{align*}
We consider this polytope with respect to the lattice
\[
 L_{84}=\{(k_1,k_2,k_3)\in\mathbb Z^3:
                 2k_1+9k_2+35k_3\equiv0\pmod {84}\}.
\]
The vectors $(-2,2,2),(3,-3,3),(4,3,-1)$ form a basis of $L_{84}$,
and $\det L_{84}=84$.

For the standard simplex
\[
 D=\{(x,y,z)\in\mathbb R^3:x,y,z\ge0,\ x+y+z\le1\},
\]
Dougherty and Faber proved $10D+L_{84}=\mathbb R^3$; see
\cite[arXiv version, Proposition~17]{Dougherty}.  Its density is
$\vol(10D)/\det L_{84}=125/63$.
The associated nonconvex lattice tile of volume $84$ is known as the
84-shape.  Forcade and Lamoreaux studied this configuration in
connection with local minima of lattice-simplex covering
density~\cite{Forcade2000}.
Here we verify that the displayed polytope $\tilde T$ is minimal under
inclusion for the fixed lattice $L_{84}$ and excludes a complete Kuhn
family.  Its minimality will follow from four inverted tetrahedra
identified by Dougherty and Faber.

\begin{figure}[htbp]
 \centering
 \begin{tikzpicture}[x={(0.56cm,-0.21cm)},y={(-0.56cm,-0.28cm)},
                     z={(0cm,0.66cm)},line join=round,font=\small]
  \coordinate (w1) at (0,1,8);
  \coordinate (w2) at (1,0,8);
  \coordinate (w3) at (1,1,8);
  \coordinate (w4) at (0,1,1);
  \coordinate (w5) at (1,0,1);
  \coordinate (w6) at (1,1,0);
  \coordinate (w7) at (1,8,0);
  \coordinate (w8) at (1,8,1);
  \coordinate (w9) at (0,8,1);
  \coordinate (w10) at (8,1,1);
  \coordinate (w11) at (8,1,0);
  \coordinate (w12) at (8,0,1);
  \fill[black!3] (w3)--(w8)--(w10)--cycle;
  \draw[gray,densely dashed,line width=.45pt] (w1)--(w4);
  \draw[gray,densely dashed,line width=.45pt] (w2)--(w5);
  \draw[gray,densely dashed,line width=.45pt] (w4)--(w5);
  \draw[gray,densely dashed,line width=.45pt] (w4)--(w6);
  \draw[gray,densely dashed,line width=.45pt] (w4)--(w9);
  \draw[gray,densely dashed,line width=.45pt] (w5)--(w6);
  \draw[gray,densely dashed,line width=.45pt] (w5)--(w12);
  \draw[gray,densely dashed,line width=.45pt] (w6)--(w7);
  \draw[gray,densely dashed,line width=.45pt] (w6)--(w11);
  \draw[line width=.65pt] (w1)--(w2);
  \draw[line width=.65pt] (w1)--(w3);
  \draw[line width=.65pt] (w1)--(w9);
  \draw[line width=.65pt] (w2)--(w3);
  \draw[line width=.65pt] (w2)--(w12);
  \draw[line width=.65pt] (w3)--(w8);
  \draw[line width=.65pt] (w3)--(w10);
  \draw[line width=.65pt] (w7)--(w8);
  \draw[line width=.65pt] (w7)--(w9);
  \draw[line width=.65pt] (w7)--(w11);
  \draw[line width=.65pt] (w8)--(w9);
  \draw[line width=.65pt] (w8)--(w10);
  \draw[line width=.65pt] (w10)--(w11);
  \draw[line width=.65pt] (w10)--(w12);
  \draw[line width=.65pt] (w11)--(w12);
  \fill (w1) circle (1.15pt);
  \fill (w2) circle (1.15pt);
  \fill (w3) circle (1.15pt);
  \fill (w4) circle (1.15pt);
  \fill (w5) circle (1.15pt);
  \fill (w6) circle (1.15pt);
  \fill (w7) circle (1.15pt);
  \fill (w8) circle (1.15pt);
  \fill (w9) circle (1.15pt);
  \fill (w10) circle (1.15pt);
  \fill (w11) circle (1.15pt);
  \fill (w12) circle (1.15pt);
  \node[anchor=south east,xshift=-2pt,yshift=2pt,fill=white,inner sep=.8pt] at (w1) {$\vw_{1}$};
  \node[anchor=south west,xshift=2pt,yshift=2pt,fill=white,inner sep=.8pt] at (w2) {$\vw_{2}$};
  \node[anchor=north,yshift=-4pt,fill=white,inner sep=.8pt] at (w3) {$\vw_{3}$};
  \node[anchor=east,xshift=-4pt,fill=white,inner sep=.8pt] at (w4) {$\vw_{4}$};
  \node[anchor=west,xshift=4pt,fill=white,inner sep=.8pt] at (w5) {$\vw_{5}$};
  \node[anchor=north,yshift=-3pt,fill=white,inner sep=.8pt] at (w6) {$\vw_{6}$};
  \node[anchor=north east,xshift=-2pt,yshift=-2pt,fill=white,inner sep=.8pt] at (w7) {$\vw_{7}$};
  \node[anchor=south west,xshift=2pt,yshift=2pt,fill=white,inner sep=.8pt] at (w8) {$\vw_{8}$};
  \node[anchor=south east,xshift=-3pt,yshift=2pt,fill=white,inner sep=.8pt] at (w9) {$\vw_{9}$};
  \node[anchor=south east,xshift=-2pt,yshift=2pt,fill=white,inner sep=.8pt] at (w10) {$\vw_{10}$};
  \node[anchor=north west,xshift=2pt,yshift=-3pt,fill=white,inner sep=.8pt] at (w11) {$\vw_{11}$};
  \node[anchor=south west,xshift=3pt,yshift=2pt,fill=white,inner sep=.8pt] at (w12) {$\vw_{12}$};
 \end{tikzpicture}
 \caption{The twelve-vertex polytope $\tilde T$.  Dashed edges lie on
 the far side in this projection.}
 \label{fig:84-convex-hull}
\end{figure}

\begin{proposition}\label{prop:84_shape}
The polytope $\tilde T$ is a minimal covering body with respect to
$L_{84}$ and contains no complete family of translated Kuhn simplices
for any basis of $L_{84}$.
\end{proposition}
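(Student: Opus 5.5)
The plan has three parts: covering, minimality, and the exclusion of Kuhn families. All three reduce to finite checks, because $\tilde T$ is a lattice-free rearrangement of the Dougherty--Faber configuration.

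\emph{Covering.} Every vertex $\vw_i$ has nonnegative coordinates with sum at most $10$, so $\tilde T\subset 10D$. I expect $\tilde T$ to be exactly the convex hull of the $84$-shape, namely $10D$ with the four inverted tetrahedra of Dougherty and Faber cut away near its corners and faces. I would first list the pieces of $10D\setminus\tilde T$. Each piece is a polytope bounded by facet planes of $\tilde T$ and of $10D$. For each piece I would exhibit a nonzero $\vz\in L_{84}$ with that piece contained in $\tilde T-\vz$; this is a finite vertex check, since both sets are polytopes. It then follows that $10D\subset\tilde T+L_{84}$. Combined with $10D+L_{84}=\Rr^3$ from \cite[Proposition~17]{Dougherty}, this gives $\tilde T+L_{84}=\Rr^3$.

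\emph{Minimality.} Let $K'\subsetneq\tilde T$ be a convex body. Then $K'$ omits some vertex $\vw_i$. Since $K'$ is closed, it omits all of $\tilde T\cap B(\vw_i,\varepsilon)$ for some $\varepsilon>0$.

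It therefore suffices to show that, for each $i$, points of $\tilde T$ arbitrarily close to $\vw_i$ lie in no translate $\tilde T-\vz$ with $\vz\ne\vnull$. Such points cannot then be covered by $K'+L_{84}$.

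By Lemma~\ref{lem:vertex condition} and the discussion after Proposition~\ref{prop:finiteness}, only the contacts $\vz\in\mathcal Z_{\tilde T}(\vw_i)$ matter near $\vw_i$. These are the finitely many vectors of $L_{84}$ in $\tilde T-\vw_i$. For each such contact, $\vw_i+\vz$ lies on $\bd\tilde T$. I would verify that
\[
\mathrm{cone}_{\tilde T}(\vw_i)\not\subseteq\bigcup_{\vz}\mathrm{cone}_{\tilde T}(\vw_i+\vz),
\]
which yields an uncovered direction into $\tilde T$ at $\vw_i$. The $12$ vertices fall into few orbits under the coordinate permutations that preserve the configuration, namely $(x,y,z)$-cyclic shifts combined with the lattice symmetry. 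So only about four vertices need genuine checking. This is exactly where the inverted tetrahedra of Dougherty and Faber enter: the cap at each vertex is a region of multiplicity one in their tiling.

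\emph{No Kuhn family.} This is the step I expect to be the main obstacle. Suppose a basis $B$ of $L_{84}$ and translates of all six $\Delta_\sigma(B)$ lie in $\tilde T$. Then $\vb_1,\vb_2,\vb_3$, all partial sums $\vb_{\sigma(1)}+\vb_{\sigma(2)}$, and $\vb_1+\vb_2+\vb_3$ lie in $(\tilde T-\tilde T)\cap L_{84}$. This set is finite: every difference satisfies $|x|,|y|,|z|\le 8$ and $|x+y+z|\le 8$. In addition, $\vol\tilde T\ge 6\cdot14=84$ is not violated, so a volume argument alone will not suffice.

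I would enumerate all unimodular triples from this finite set that satisfy the difference conditions. For each triple, I would test the six containments $\mathbf t+\Delta_\sigma(B)\subset\tilde T$. Each test asks whether $\bigcap_k(\tilde T-\mathbf s_k)$ is nonempty, which is a small linear feasibility problem in the facet inequalities of $\tilde T$. I would try first to prune by widths: the width of $\Delta_\sigma(B)$ in each facet normal direction of $\tilde T$ must not exceed the width of $\tilde T$. I would also use the symmetry of $\tilde T$ and $L_{84}$ to reduce the list, hoping that very few candidate bases survive and that each fails an explicit feasibility test. If the pruning is insufficient, the remaining cases would be settled by a direct computer check.
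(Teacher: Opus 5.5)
The genuine gap is in the exclusion of Kuhn families. Your enumeration over bases drawn from $(\tilde T-\tilde T)\cap L_{84}$ is finite in principle. But you only describe a search and hope it ends in a contradiction, so nothing is actually proved. The missing idea is that, once minimality is known, Theorem~\ref{thm:kuhn_cover} does the work.

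Suppose $\mathbf t_\sigma+\Delta_\sigma(B)\subset\tilde T$ for every $\sigma\in S_3$. Then $Q=\conv\bigcup_\sigma(\mathbf t_\sigma+\Delta_\sigma(B))$ satisfies $Q+L_{84}=\Rr^3$; apply the theorem after the linear map sending the standard basis to $B$. Minimality then forces $Q=\tilde T$. Each vertex of $\tilde T$ is a vertex of one of the six simplices. The four vertices of each simplex lie in one coset $\mathbf t_\sigma+L_{84}$. Hence the twelve vertices of $\tilde T$ lie in at most six cosets. But their residues $2k_1+9k_2+35k_3 \bmod 84$ are $37,30,39,44,37,11,74,25,23,60,25,51$, which gives ten distinct classes. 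No search over bases is needed, and this is why minimality has to be proved first.

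For minimality your reduction is correct. A proper convex subbody misses some vertex together with a neighbourhood of it. So it suffices to exhibit, near every vertex, points of $\tilde T$ lying in no $\tilde T-\vz$ with $\vz\ne\vnull$. However, the verification is never done, and two supporting steps are faulty.

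\begin{itemize}
\item Lemma~\ref{lem:vertex condition} and the boundary remark after Proposition~\ref{prop:finiteness} both assume minimality, so invoking them for $\tilde T$ is circular. Only the localization to contacts at $\vw_i$ survives, because the argument for \eqref{eq:nearby-contact-translates} works for any convex body.
\item No nontrivial coordinate permutation preserves $L_{84}$. For example, $(-2,2,2)\in L_{84}$, whereas $(2,-2,2)$ and $(2,2,-2)$ have residues $56$ and $36$. So your orbit reduction by cyclic shifts is unjustified.
\end{itemize}

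What finishes the argument, with no cone computations, is the fact you only gesture at, made precise. All twelve vertices of $\tilde T$ are vertices of the tetrahedra $U_i=\mathbf c_i-D$ with $\mathbf c_i\in\{(1,1,1),(8,1,1),(1,8,1),(1,1,8)\}$. Their interiors meet no $10D+\boldsymbol\lambda$ with $\boldsymbol\lambda\ne\vnull$. Since $\tilde T\subset 10D$, they also meet no nonzero translate of $\tilde T$. Therefore any covering convex $K\subseteq\tilde T$ contains every $U_i$, and hence equals $\tilde T=\conv\bigcup_iU_i$. Note that these $U_i$ lie inside $\tilde T$; they are not what is cut away from $10D$. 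The privacy property must itself be proved or precisely cited. The paper reduces it to $\mathbf q\in\mathbb Z_{\ge0}^3$ with $q_1+q_2+q_3\le7$ and argues modulo $7$ and modulo $12$.

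Your covering step reduces to $10D+L_{84}=\Rr^3$ plus a finite check on $10D\setminus\tilde T$. This is a legitimate alternative to the paper's count of unit-cube residue classes. But the pieces and their translates are again left unspecified, and the natural pieces overlap, so a decomposition must actually be given.
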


\begin{proof}
We first group the integer translates of a unit cube by their class
modulo $L_{84}$. Whole cubes contained in $\tilde T$ account for 76
classes; pairs of pieces handle the remaining eight. For minimality,
we show that every convex subbody which still covers must contain
four inverted unit tetrahedra whose convex hull is $\tilde T$.
Finally, a count of the vertex classes excludes a complete Kuhn family.

Cutting the cube $[0,8]^3$ by the planes below gives precisely the twelve
listed vertices.  Thus
\begin{equation}\label{eq:84-facets}
\begin{gathered}
 0\le x,y,z\le8,\qquad
 1\le x+y,x+z,y+z\le9,\\
 2\le x+y+z\le10
\end{gathered}
\end{equation}
is a halfspace description of $\tilde T$.
The facets in $x=0$ and $x=8$ are triangles of areas $49/2$ and $1/2$,
respectively, so $\tilde T$ is not centrally symmetric.

\smallskip\noindent\textbf{Covering.}
Put $C=[0,1]^3$ and define
\[
 \phi(\mathbf k)=2k_1+9k_2+35k_3\pmod {84}
 \qquad(\mathbf k=(k_1,k_2,k_3)\in\mathbb Z^3).
\]
We use $\phi$ to label the $L_{84}$-translation classes of unit cubes:
two integer translates of $C$ have the same label exactly when their
translation vectors differ by an element of $L_{84}$.
We cover each class using whole cubes in $\tilde T$ or portions cut
out by its supporting planes.
By \eqref{eq:84-facets}, a whole cube $\mathbf k+C$ lies in $\tilde T$ exactly when
\begin{equation}\label{eq:84-full-cubes}
 k_i\ge0,\qquad 2\le k_1+k_2+k_3\le7,
 \qquad k_i+k_j\ge1\quad(i<j).
\end{equation}
The $98$ cubes in \eqref{eq:84-full-cubes} represent $76$ residue
classes. The cubes in \eqref{eq:84-full-cubes} fill six consecutive layers
$k_1+k_2+k_3=s$, $2\le s\le7$.
Table~\ref{tab:84-cube-layers} lists the residue classes first encountered
in each layer.  Its entries are obtained by taking
$k_1=0,\ldots,s$, $k_2=0,\ldots,s-k_1$,
$k_3=s-k_1-k_2$, and omitting the three points on the coordinate axes.
In particular, this list includes every cube in \eqref{eq:84-full-cubes}.

\begin{table}[htbp]
\centering\small
\begin{tabular}{c|c|l}
$s$&Number of cubes&New values of $\phi(\mathbf k)$\\\hline
2&3&$11,37,44$\\
3&7&$13,20,39,46,53,72,79$\\
4&12&$4,15,22,23,29,30,41,48,55,62,74,81$\\
5&18&$6,17,24,25,31,32,38,43,50,57,58,64,65,71,76,83$\\
6&25&$1,8,9,16,19,26,27,33,34,40,45,47,52,59,60,66,67,73,78,80$\\
7&33&$3,5,10,18,21,28,35,36,42,49,51,54,56,61,68,69,75,82$
\end{tabular}
\caption{The $98$ contained unit cubes represent $76$ residue classes.
A residue already occurring in a preceding layer is not repeated.}
\label{tab:84-cube-layers}
\end{table}

The missing classes are
\[
                0,2,7,12,14,63,70,77.
\]
Each is covered by two portions of unit cubes, as shown in
Table~\ref{tab:84-exceptional-cubes}.  In that table $\vu=(x,y,z)$ is a point
of $C$, and $X=x+y$, $Y=y+z$, $Z=x+z$, $S=x+y+z$.
An entry $\mathbf k:$ condition means that $\mathbf k+\vu\in\tilde T$ whenever $\vu\in C$
satisfies the condition.  Both vectors in row $r$ have residue $r$.

\begin{table}[htbp]
\centering
\begin{tabular}{c|c|c}
$r$&First portion&Second portion\\\hline
0&$(0,0,0):\ S\ge2$&$(2,5,1):\ S\le2$\\
2&$(0,1,7):\ Y\le1$&$(1,0,0):\ Y\ge1$\\
7&$(0,0,5):\ X\ge1$&$(1,6,1):\ S\le2$\\
12&$(4,2,2):\ S\le2$&$(6,0,0):\ Y\ge1$\\
14&$(0,7,1):\ Y\le1$&$(7,0,0):\ Y\ge1$\\
63&$(0,7,0):\ Z\ge1$&$(5,2,1):\ S\le2$\\
70&$(0,0,2):\ X\ge1$&$(4,3,1):\ S\le2$\\
77&$(0,0,7):\ X\ge1$&$(3,4,1):\ S\le2$
\end{tabular}
\caption{Two portions cover the unit cube in each exceptional class.}
\label{tab:84-exceptional-cubes}
\end{table}

The containments follow by substituting $\mathbf k+\vu$ in \eqref{eq:84-facets}.
The two conditions in rows $0,2,14$ are complementary.  In rows
$7,70,77$, $X<1$ implies $S<2$, since $z\le1$; row $63$ uses
$Z<1\Rightarrow S<2$, and row $12$ uses $S>2\Rightarrow Y>1$.
Thus every $\vu\in C$ is covered in every residue class.
More explicitly, write an arbitrary point as $\mathbf n+\vu$, where
$\mathbf n\in\mathbb Z^3$ and $\vu\in[0,1)^3$.  The tables supply an integer
vector $\mathbf k$ with $\phi(\mathbf k)=\phi(\mathbf n)$ and $\mathbf k+\vu\in\tilde T$.  Since
$\mathbf n-\mathbf k\in L_{84}$, this proves $\tilde T+L_{84}=\mathbb R^3$.

\smallskip\noindent\textbf{Minimality.}
Dougherty and Faber identified four inverted unit tetrahedra whose
interiors are covered only by the untranslated copy of $10D$;
see \cite[arXiv version, proof of Proposition~19]{Dougherty}.
Put
\[
 \begin{gathered}
 \mathbf c_0=(1,1,1),\quad \mathbf c_1=(8,1,1),\quad
 \mathbf c_2=(1,8,1),\quad \mathbf c_3=(1,1,8),\\
 U_i=\mathbf c_i-D\quad(0\le i\le3).
 \end{gathered}
\]
The property we use is
\begin{equation}\label{eq:84-private-tetrahedra}
 \inte U_i\cap(10D+\boldsymbol\lambda)=\varnothing
 \qquad(0\ne\boldsymbol\lambda\in L_{84},\ 0\le i\le3).
\end{equation}
We verify \eqref{eq:84-private-tetrahedra} directly from the defining
congruence of $L_{84}$. Suppose that $\mathbf p\in\inte U_i$ and
$\mathbf p-\boldsymbol\lambda\in10D$ for some
$\boldsymbol\lambda\in L_{84}$.
Write $\mathbf p=\mathbf c_i-\vu$, where $u_j>0$ and
$u_1+u_2+u_3<1$, and set
\[
              \mathbf q=\mathbf c_i-(1,1,1)-\boldsymbol\lambda.
\]
The coordinate inequalities for $\mathbf p-\boldsymbol\lambda\in10D$
give $\lambda_j\le(\mathbf c_i)_j-1$.  Its sum inequality, together
with integrality, gives
$\lambda_1+\lambda_2+\lambda_3\ge
 (\mathbf c_i)_1+(\mathbf c_i)_2+(\mathbf c_i)_3-10$.
Thus
\[
 \mathbf q\in\mathbb Z_{\ge0}^3,\qquad q_1+q_2+q_3\le7,
 \qquad \phi(\mathbf q)=\phi(\mathbf c_i-(1,1,1)).
\]
The four residues on the right, for $i=0,1,2,3$, are respectively
$0,14,63,77$.

Let $\mathbf q=(x,y,z)$ satisfy these conditions, with residue $r$.
Reduction modulo $7$ gives $2(x+y)\equiv0\pmod7$, so $x+y=0$ or $7$.
If $x+y=7$, then $z=0$ and
\[
                2x+9y+35z=63-7x\in[14,63].
\]
Among the four possible residues, this gives only $r=14$ with
$\mathbf q=(7,0,0)$ or $r=63$ with $\mathbf q=(0,7,0)$.
If $x+y=0$, then $x=y=0$ and
$5z\equiv r/7\pmod{12}$.  Multiplying by $5$ gives, respectively,
$z\equiv0,10,9,7\pmod{12}$; since $0\le z\le7$, only
$\mathbf q=(0,0,0)$ for $r=0$ and $\mathbf q=(0,0,7)$ for $r=77$
remain.  Therefore $\mathbf q=\mathbf c_i-(1,1,1)$ in every case,
which forces $\boldsymbol\lambda=\vnull$ and proves
\eqref{eq:84-private-tetrahedra}.

All sixteen defining vertices of the four $U_i$ satisfy
\eqref{eq:84-facets}, and among them are all twelve vertices of
$\tilde T$.  Consequently,
\[
                  \tilde T=\conv\bigcup_{i=0}^3 U_i\subset10D.
\]
Now let $K\subseteq\tilde T$ be a convex body with
$K+L_{84}=\mathbb R^3$.  By \eqref{eq:84-private-tetrahedra},
no point of $\inte U_i$ belongs to $K+\boldsymbol\lambda$ for a
nonzero $\boldsymbol\lambda\in L_{84}$.  Hence $\inte U_i\subset K$.
Closedness gives $U_i\subset K$ for every $i$, and convexity gives
$K=\tilde T$.

\smallskip\noindent\textbf{Exclusion of a complete Kuhn family.}
Suppose that six translated Kuhn simplices for a basis of $L_{84}$ lie
in $\tilde T$, and let $Q$ be their convex hull.
Theorem~\ref{thm:kuhn_cover} gives $Q+L_{84}=\mathbb R^3$; minimality
then gives $Q=\tilde T$.  The four vertices of each translated simplex
belong to one affine coset of $L_{84}$, even when its translation vector
is not integral.  Every vertex of $Q$ is a vertex of one of the six
simplices.  Thus its vertices meet at most six lattice cosets.
On the other hand, in the order displayed above,
\[
 (\phi(\vw_1),\ldots,\phi(\vw_{12}))
 =(37,30,39,44,37,11,74,25,23,60,25,51).
\]
There are ten distinct residue classes in this list, a contradiction.
\end{proof}

\subsection{Proof of Theorem \ref{thm:infinite_asym}: Existence of Infinitely Many Types}

For a positive integer $m$, let
\[
 T_m=\conv\{(0,0,0),(1,0,0),(0,1,0),(1,1,m)\}
\]
be the Reeve tetrahedron, and put
\[
 \tilde T_m=\conv\bigl(T_m\cup(T_m+(0,0,1))\bigr)
           =T_m+[\vnull,\ve_3],\qquad \ve_3=(0,0,1).
\]
The tetrahedron $T_m$ has volume $m/6$ and no integer points other than
its vertices: its projection is $[0,1]^2$, and the fiber over each integer
point of that square is one of the four vertices.  The eight vertices of
$\tilde T_m$ are
\[
 \begin{gathered}
 \vv_1=(0,0,0),\quad\vv_2=(1,0,0),\quad
 \vv_3=(0,1,0),\quad\vv_4=(1,1,m),\\
 \vv_{4+i}=\vv_i+\ve_3\qquad(1\le i\le4).
 \end{gathered}
\]

This construction fits \cite[Propositions~2.8 and~2.9]{Codenotti}:
$\tilde T_m$ is the convex hull of vertical unit intervals over the
four vertices of $[0,1]^2$, and every Minkowski convex combination
of these intervals has length one. Those results therefore also imply
the covering and minimality assertions below. We give a direct proof
for this family.

\begin{proposition}\label{prop:reeve_mb}
For every positive integer $m$, $\tilde T_m$ is a non-centrally-symmetric
minimal covering body with respect to $\mathbb Z^3$.
\end{proposition}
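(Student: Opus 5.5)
Three assertions have to be checked: covering, minimality, and failure of central symmetry. All three reduce to the vertical fibres of $\tilde T_m$ over the unit square. Let $\pi(x,y,z)=(x,y)$. Then $\pi(T_m)=\pi(\tilde T_m)=[0,1]^2$. For $(x,y)\in[0,1]^2$, the fibre of $T_m$ over $(x,y)$ is a segment $\{(x,y)\}\times[a(x,y),b(x,y)]$, where $a$ is convex and $b$ is concave. Because $\tilde T_m=T_m+[\vnull,\ve_3]$, the fibre of $\tilde T_m$ over $(x,y)$ is $\{(x,y)\}\times[a(x,y),b(x,y)+1]$, whose length is $1+\ell(x,y)$ with $\ell=b-a\ge0$. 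At the four corners of the square the fibre of $T_m$ is a single vertex, so there $\ell=0$.

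\textbf{Covering.} Every fibre of $\tilde T_m$ over $[0,1]^2$ has length at least $1$. Its translates by $\mathbb Z\ve_3$ therefore cover the whole vertical line. Hence $\tilde T_m+\mathbb Z\ve_3\supset[0,1]^2\times\Rr$. Adding the translations $\mathbb Z^2\times\{0\}$ gives $\tilde T_m+\mathbb Z^3=\Rr^3$.

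\textbf{Minimality.} Let $K\subseteq\tilde T_m$ be a convex body with $K+\mathbb Z^3=\Rr^3$. Fix a point $(x,y)\in(0,1)^2$. If a translate $K+(k_1,k_2,k_3)$ meets the line $\{(x,y)\}\times\Rr$, then $(x-k_1,y-k_2)\in\pi(K)\subseteq[0,1]^2$. This forces $k_1=k_2=0$, so this open column is covered only by the vertical translates $K+k_3\ve_3$. Consequently the fibre of $K$ over $(x,y)$ is a segment of length at least $1$.

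Now let $(x,y)$ tend to a corner of the square, say the corner under $\vv_i$ and $\vv_{i+4}$ for some $1\le i\le4$. Choose the endpoints of these fibres of $K$ and pass to a convergent subsequence. By compactness of $K$, the limit gives a segment of length at least $1$ inside the fibre of $K$ over that corner. That fibre is contained in the fibre of $\tilde T_m$ over the corner, which is exactly $[\vv_i,\vv_{i+4}]$ and has length exactly $1$. Hence $[\vv_i,\vv_{i+4}]\subset K$. Doing this at all four corners gives $\vv_1,\ldots,\vv_8\in K$, and therefore $K=\tilde T_m$.

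The delicate point is this limiting argument. The key observation is that the fibre length $1+\ell$ of $\tilde T_m$ drops to exactly $1$ at the corners, leaving $K$ no room there.

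\textbf{Non-central symmetry.} Suppose $\tilde T_m$ were centrally symmetric. Its centre would be the centroid of its vertex set, namely $\bigl(\tfrac12,\tfrac12,\tfrac{m+2}4\bigr)$, and reflection through this point would permute the vertices.

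The reflection sends $\vv_1=(0,0,0)$ to $\bigl(1,1,\tfrac{m+2}2\bigr)$. The only vertices over $(1,1)$ have $z=m$ or $z=m+1$. So $\tfrac{m+2}2=m$ or $\tfrac{m+2}2=m+1$, which gives $m=2$, since $m=0$ is excluded.

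For $m=2$, the reflection sends $\vv_2=(1,0,0)$ to $(0,1,2)$. But the only vertices over $(0,1)$ are $\vv_3$ and $\vv_7$, with $z=0$ and $z=1$. This is a contradiction, so $\tilde T_m$ is not centrally symmetric for any positive integer $m$.
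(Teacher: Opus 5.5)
Your proof is correct and uses the same fibre argument as the paper: the covering step is identical, and for minimality you show that every fibre of $K$ over $(0,1)^2$ has length at least one, then pass to the corners by compactness. This is a slightly cleaner variant of the paper's step with the private interval $(b,a+1)$ and the explicit points $\mathbf p_\varepsilon$, and it needs no formulas for $a,b$. For non-central symmetry you use the vertex centroid with a separate check at $m=2$, whereas the paper compares the fibres over the two pairs of opposite corners (forcing third coordinates $(m+1)/2$ and $1/2$) and so needs no case split; both arguments are valid.
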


\begin{proof}
\noindent\textbf{Covering.}
The vertical fiber of $\tilde T_m$ over $(x,y)\in[0,1]^2$ is
$[a(x,y),b(x,y)+1]$, where
\begin{equation}\label{eq:reeve-fibers}
 a(x,y)=m\max\{0,x+y-1\},\qquad b(x,y)=m\min\{x,y\}.
\end{equation}
Indeed, a convex combination of the four vertices of $T_m$ assigns a
weight between $\max\{0,x+y-1\}$ and $\min\{x,y\}$ to $(1,1,m)$;
adding $[\vnull,\ve_3]$ then gives the stated fiber.
Every fiber has length at least one, and the projected square $[0,1]^2$
tiles by $\mathbb Z^2$.  After an integer horizontal translation, any
point lies above this square; an integer vertical translation then
places its height in the corresponding fiber.  Thus
$\tilde T_m+\mathbb Z^3=\mathbb R^3$.

\smallskip\noindent\textbf{Minimality.}
Let $K\subseteq\tilde T_m$ be compact and convex with
$K+\mathbb Z^3=\mathbb R^3$.
If $(x,y)\in(0,1)^2$ and $b(x,y)<z<a(x,y)+1$, then $(x,y,z)$ lies in
$\tilde T_m$ and belongs to no other integer translate of $\tilde T_m$.
Indeed, the horizontal translation must be zero because $(x,y)$ lies in
the interior of the projected unit square.  A nonzero vertical
translation would place its height outside $[a(x,y),b(x,y)+1]$, since
$z-1<a(x,y)$ and $z+1>b(x,y)+1$.
Since $K\subseteq\tilde T_m$ and $K+\mathbb Z^3$ covers the space,
every such point must belong to $K$.

For a corner $\mathbf q\in\{0,1\}^2$, put
\[
 \mathbf p_\varepsilon=(1-2\varepsilon)\mathbf q
                       +(\varepsilon,\varepsilon),
 \qquad 0<\varepsilon<\frac{1}{2(m+1)}.
\]
Here $\mathbf p_\varepsilon\in(0,1)^2$.  Since
$b(\mathbf p_\varepsilon)-a(\mathbf p_\varepsilon)=m\varepsilon$
and $(m+1)\varepsilon<1$, both points
\begin{equation}\label{eq:reeve-exclusive-points}
 \bigl(\mathbf p_\varepsilon,b(\mathbf p_\varepsilon)+\varepsilon\bigr),
 \qquad
 \bigl(\mathbf p_\varepsilon,a(\mathbf p_\varepsilon)+1-\varepsilon\bigr)
\end{equation}
satisfy the preceding strict inequalities and therefore belong to $K$;
see Figure~\ref{fig:exclusive-fiber}.
As $\varepsilon\downarrow0$, they approach the two vertices of
$\tilde T_m$ over $\mathbf q$: the functions $a,b$ are continuous and
$a(\mathbf q)=b(\mathbf q)$.
Letting $\mathbf q$ range over the four corners, closedness forces all
eight vertices into $K$, and convexity gives $K=\tilde T_m$.

\begin{figure}[htbp]
 \centering
 \begin{tikzpicture}[x=3.25cm,y=1cm,font=\small,line cap=round]
  \draw[gray,line width=1pt] (-1,.85)--(.4,.85);
  \draw[gray,line width=1pt] (1,-.85)--(2.4,-.85);
  \draw[line width=1pt] (0,0)--(1.4,0);
  \draw[line width=2.5pt] (.4,0)--(1,0);
  \draw[gray,densely dashed] (.4,.83)--(.4,-.25);
  \draw[gray,densely dashed] (1,.25)--(1,-.83);
  \foreach \x/\y in {-1/.85,.4/.85,0/0,1.4/0,1/-.85,2.4/-.85}
    \draw (\x,{\y-.055})--(\x,{\y+.055});
  \node[above=4pt] at (-1,.85) {$a-1$};
  \node[above=4pt] at (.4,.85) {$b$};
  \node[above=4pt] at (-.3,.85) {$[a-1,b]$};
  \node[above=4pt] at (0,0) {$a$};
  \node[above=4pt] at (1.4,0) {$b+1$};
  \node[below=4pt] at (1,-.85) {$a+1$};
  \node[below=4pt] at (2.4,-.85) {$b+2$};
  \node[below=4pt] at (1.7,-.85) {$[a+1,b+2]$};
  \draw[fill=white,line width=.6pt] (.4,0) circle (2pt);
  \draw[fill=white,line width=.6pt] (1,0) circle (2pt);
  \fill (.55,0) circle (1.8pt);
  \fill (.85,0) circle (1.8pt);
  \node[above=5pt,fill=white,inner sep=1pt] at (.55,0) {$b+\varepsilon$};
  \node[below=5pt,fill=white,inner sep=1pt] at (.85,0) {$a+1-\varepsilon$};
  \node at (.7,-1.95) {covered only by $[a,b+1]$};
 \end{tikzpicture}
 \caption{The fiber over $\mathbf p_\varepsilon$ and its two neighboring
 integer translates, drawn schematically at different levels for clarity.
 Here $a=a(\mathbf p_\varepsilon)$ and $b=b(\mathbf p_\varepsilon)$.
 The thick open interval $(b,a+1)$ meets no nonzero integer translate
 of $[a,b+1]$.  The two points in \eqref{eq:reeve-exclusive-points}
 are marked by their heights in this interval.}
 \label{fig:exclusive-fiber}
\end{figure}

Every integer point of $\tilde T_m$ projects to a corner of $[0,1]^2$.
The corner fibers are unit intervals with integer endpoints, so
\begin{equation}\label{eq:reeve-integral-points}
 \tilde T_m\cap\mathbb Z^3
   =\operatorname{vert}(\tilde T_m)=\{\vv_1,\ldots,\vv_8\}.
\end{equation}
Finally, a center of symmetry would project to $(1/2,1/2)$ and
interchange the fibers over opposite corners.
The fibers over $(0,0)$ and $(1,1)$ would force its third coordinate
to be $(m+1)/2$, whereas those over $(1,0)$ and $(0,1)$ would force
it to be $1/2$.  This is impossible for $m\ge1$.
\end{proof}

\begin{proof}[Proof of Theorem~\ref{thm:infinite_asym}]
By Proposition~\ref{prop:reeve_mb}, $\tilde T_m$ is a minimal covering
body for $\mathbb Z^3$ and is not centrally symmetric.
It remains to distinguish the arithmetic contact types.
All vertices of $\tilde T_m$ belong to $\mathbb Z^3$.  Hence the root
contact polytope at any vertex $\vv$ is
\[
 P_{\vv}=\conv\bigl((\tilde T_m-\vv)\cap\mathbb Z^3\bigr)
     =\tilde T_m-\vv,
\]
by \eqref{eq:reeve-integral-points}.  Adding a vertical unit segment to
$T_m$ increases its volume by the area of its projection, which is one.
Therefore
\[
             \vol(P_{\vv})=\vol(\tilde T_m)=\frac m6+1.
\]
An isomorphism of arithmetic contact profiles for the fixed lattice
$\mathbb Z^3$ preserves this volume.  Since these values are pairwise
distinct, the bodies $\tilde T_m$ have pairwise distinct arithmetic
contact types.
\end{proof}

\section{Infinite Arithmetic Contact Types of Four-Dimensional Centrally Symmetric Minimal Covering Bodies}\label{sec:four-dimensional CSM}

We now construct centrally symmetric minimal covering bodies in
$\mathbb R^4$ by symmetrizing the noncentrally symmetric bodies $\tilde T_m$.
As in the three-dimensional construction, their root contact polytopes
have increasing volume.  Define
\[
 \tilde T_m^\bullet
 =\conv\bigl((\tilde T_m\times\{0\})
          \cup(-\tilde T_m\times\{1\})\bigr).
\]
This body is centrally symmetric about $(0,0,0,1/2)$.
Translation by $-(0,0,0,1/2)$ gives an origin-symmetric body.

This is the Cayley construction with endpoint bodies $\tilde T_m$
and $-\tilde T_m$. Its covering property is equivalent to the covering
of all their Minkowski convex combinations
\cite[Lemma~2.13]{Codenotti}. Once covering is established, minimality
also follows from \cite[Proposition~2.8]{Codenotti} and
Proposition~\ref{prop:reeve_mb}. We give direct proofs of both
properties below.

\begin{proposition}\label{prop:four-dimensional_mb}
For every positive integer $m$, $\tilde T_m^\bullet$ is a centrally
symmetric minimal covering body with respect to $\mathbb Z^4$.
\end{proposition}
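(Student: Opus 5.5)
Central symmetry is immediate: the map $(\vx,s)\mapsto(-\vx,1-s)$ fixes $(0,0,0,1/2)$ and swaps $\tilde T_m\times\{0\}$ with $-\tilde T_m\times\{1\}$, so it maps $\tilde T_m^\bullet$ onto itself. Covering and minimality both go through the horizontal sections. For $s\in[0,1]$ the section of $\tilde T_m^\bullet$ at height $s$ is the Minkowski combination
\[
 K_s=(1-s)\tilde T_m+s(-\tilde T_m),
\]
the standard description of a Cayley polytope. Every point of $\Rr^4$ can be moved by an integer in the fourth coordinate to height $s\in[0,1)$. Covering therefore reduces to showing $K_s+\Z^3=\Rr^3$ for every $s\in[0,1]$.

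\textbf{Covering.} I reuse the fiber argument of Proposition~\ref{prop:reeve_mb}. The set $-\tilde T_m$ projects to $[-1,0]^2$. Over $(x,y)\in[-1,0]^2$ its vertical fiber is $[-b(-x,-y)-1,\,-a(-x,-y)]$, again of length at least one. The projection of $K_s$ is the unit square $Q_s=[-s,1-s]^2$, which tiles $\Rr^2$ under $\Z^2$. Take a point $(x,y)\in Q_s$ and write it as $(1-s)(x_0,y_0)+s(x_1,y_1)$ with $(x_0,y_0)\in[0,1]^2$ and $(x_1,y_1)\in[-1,0]^2$. The fiber of $K_s$ over $(x,y)$ contains $(1-s)$ times the first fiber plus $s$ times the second, so its length is at least $(1-s)+s=1$. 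The two steps are then exactly as before: a horizontal integer translation brings the point over $Q_s$, and a vertical integer translation brings its height into the fiber. Hence $K_s+\Z^3=\Rr^3$, and so $\tilde T_m^\bullet+\Z^4=\Rr^4$.

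\textbf{Minimality.} I adapt the exclusive-point argument. Consider a point $(x,y,z,s)$ with $0<s<1$ and $(x,y)$ in the interior of $Q_s$. Suppose some integer translate $\tilde T_m^\bullet+(\mathbf k,j)$ contains it. Since $0<s<1$, the fourth coordinate of the translate forces $j=0$. The interior position of $(x,y)$ in $Q_s$ forces the horizontal part of $\mathbf k$ to be zero. Thus the only competing translates are vertical shifts of the fiber $I_s(x,y)$ of $K_s$ over $(x,y)$. If $I_s(x,y)=[\alpha,\beta]$ has length less than $2$, the open interval $(\beta-1,\alpha+1)$ consists of points covered only by $\tilde T_m^\bullet$ itself. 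Any covering convex subbody $K\subseteq\tilde T_m^\bullet$ must contain all such points.

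It then suffices to produce such exclusive points converging to each of the sixteen vertices $(\vv_i,0)$ and $(-\vv_i,1)$. Closedness puts every vertex in $K$, and convexity gives $K=\tilde T_m^\bullet$. By symmetry I need only treat the vertices at height $0$. For these I take $s=\varepsilon$ small and $(x,y)$ near a corner of $Q_\varepsilon$, pushed slightly inward exactly as $\mathbf p_\varepsilon$ was pushed inward in Proposition~\ref{prop:reeve_mb}.

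\textbf{Main obstacle.} The hard step is controlling the fiber length of $K_s$ near a corner. I need length $1+O(\varepsilon)$ there, so that $(\beta-1,\alpha+1)$ is a nonempty interval whose endpoints approach the two vertices over that corner as $\varepsilon\to0$. This requires computing the fiber of a Minkowski combination, which is the supremum over all decompositions $(x,y)=(1-s)\vp_0+s\vp_1$ and not just a single one. Near a corner of $Q_s$, both $\vp_0$ and $\vp_1$ are forced to lie within $O(\varepsilon)$ of the corresponding corners of their squares. Over such points $a$ and $b$ differ by $O(\varepsilon)$, with constants depending on $m$. This gives fiber length $1+O(m\varepsilon)$, and I expect to check it carefully using \eqref{eq:reeve-fibers}.
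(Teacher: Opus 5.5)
Your covering argument is the paper's in substance. The paper writes $K_\lambda=\bigl((1-\lambda)T_m-\lambda T_m\bigr)+[-\lambda,1-\lambda]\ve_3$ to exhibit a vertical unit summand, which is your convex-combination-of-fibers observation. Your symmetry reduction to the eight vertices at height $0$ is valid: $(\vx,s)\mapsto(-\vx,1-s)$ preserves $\tilde T_m^\bullet$ and permutes its $\Z^4$-translates, so it maps exclusive points to exclusive points.

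For minimality you take a genuinely different route. The paper never computes fibers of $K_\lambda$. For a covering compact convex $K'\subseteq\tilde T_m^\bullet$, the slab $0\le x_4\le1$ forces each section $K'_\lambda$ with $0<\lambda<1$ to cover $\Rr^3$ by $\Z^3$. A compactness argument passes this to $K'_0$ and $K'_1$: a fixed point has finitely many candidate translates, and closedness finishes. Proposition~\ref{prop:reeve_mb}, applied to $K'_0\subseteq\tilde T_m$ and $-K'_1\subseteq\tilde T_m$, then gives $K'_0=\tilde T_m$ and $K'_1=-\tilde T_m$, and convexity finishes. That route uses three-dimensional minimality as a black box, and it really shows that a covering Cayley body whose two ends are minimal covering bodies is itself minimal. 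Yours is self-contained and produces explicit exclusive points in $\Rr^4$, at the price of a corner estimate for fibers of Minkowski combinations.

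That estimate is true, but one step of your sketch is not. Take $s=\varepsilon$ and push the point in by an amount of order $\varepsilon$ from the corner $\mathbf q-\varepsilon(1,1)$ of $Q_\varepsilon$, as with $\mathbf p_\varepsilon$. A relation such as $x+\varepsilon=(1-\varepsilon)x_0+\varepsilon(x_1+1)$, with both terms nonnegative, gives $x_0=O(\varepsilon)$ but only $x_1+1=O(1)$. So $\mathbf p_1$ is not forced near its corner.

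The fix is the weight. The $\mathbf p_1$-contribution is multiplied by $\varepsilon$, and every fiber of $-\tilde T_m$ lies in $[-m-1,0]$. Hence the fiber $[\alpha,\beta]$ of $K_\varepsilon$ lies in $(1-\varepsilon)\bigl[a(\mathbf q)-O(m\varepsilon),\,b(\mathbf q)+1+O(m\varepsilon)\bigr]+\varepsilon[-m-1,0]$. Since $a(\mathbf q)=b(\mathbf q)$ and $\beta-\alpha\ge1$, you get $\alpha=a(\mathbf q)+O(m\varepsilon)$ and $\beta=a(\mathbf q)+1+O(m\varepsilon)$. Alternatively, place the point within $\varepsilon^2$ of the corner, which does pin $\mathbf p_1$.

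With this correction the intervals $(\beta-1,\alpha+1)$ are nonempty, and their endpoints converge to the two vertices over $\mathbf q$. Your argument is then complete.
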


\begin{proof}
\noindent\textbf{Covering.}
For $0\le\lambda\le1$, identify the section at fourth coordinate
$\lambda$ with
\[
 K_\lambda
 =\{\mathbf x\in\mathbb R^3:
                  (\mathbf x,\lambda)\in\tilde T_m^\bullet\}
 =(1-\lambda)\tilde T_m+\lambda(-\tilde T_m).
\]
Its projection onto the first two coordinates is
\[
 (1-\lambda)[0,1]^2+\lambda[-1,0]^2=[-\lambda,1-\lambda]^2.
\]
Since $\tilde T_m=T_m+[\vnull,\ve_3]$, we also have
\[
 K_\lambda
 =\bigl((1-\lambda)T_m+\lambda(-T_m)\bigr)
       +[-\lambda,1-\lambda]\ve_3.
\]
The last summand is a vertical segment of length one.  The other
summand has the same projection as $K_\lambda$, so every vertical
fiber of $K_\lambda$ contains an interval of length one.
Since the projected square tiles by $\mathbb Z^2$, the fiber argument
in Proposition~\ref{prop:reeve_mb} gives
$K_\lambda+\mathbb Z^3=\mathbb R^3$ for every $\lambda\in[0,1]$.
Integer translation in the fourth coordinate now gives
$\tilde T_m^\bullet+\mathbb Z^4=\mathbb R^4$.

Let $\pi(x_1,x_2,x_3,x_4)=(x_1,x_2,x_4)$.
The section projections above give
\begin{equation}\label{eq:four-dimensional-base}
 \pi(\tilde T_m^\bullet)=D_4
       =\{(s-u,t-u,u):0\le s,t,u\le1\}.
\end{equation}
Its edge vectors $(1,0,0)$, $(0,1,0)$, and $(-1,-1,1)$ form a basis
of $\mathbb Z^3$, so $D_4$ has volume one.

\smallskip\noindent\textbf{Minimality.}
Let $K'\subseteq\tilde T_m^\bullet$ be compact and convex with
$K'+\mathbb Z^4=\mathbb R^4$, and write
\[
 K'_\lambda
 =\{\mathbf x\in\mathbb R^3:(\mathbf x,\lambda)\in K'\},
 \qquad 0\le\lambda\le1.
\]
If $0<\lambda<1$ and
$(\mathbf x,\lambda)\in K'+(\mathbf u,k)$, where
$(\mathbf u,k)\in\mathbb Z^3\times\mathbb Z$, then
$0\le\lambda-k\le1$ because $K'$ lies in the slab $0\le x_4\le1$.
Since $k$ is an integer, this forces $k=0$.  Therefore
\[
 K'_\lambda+\mathbb Z^3=\mathbb R^3,
 \qquad 0<\lambda<1.
\]

Fix $\mathbf x\in\mathbb R^3$ and choose $\lambda_j\downarrow0$ with
$0<\lambda_j<1$.  For each $j$, there is $\mathbf u_j\in\mathbb Z^3$
such that $(\mathbf x-\mathbf u_j,\lambda_j)\in K'$.
For this fixed $\mathbf x$, boundedness of $K'$ confines the vectors
$\mathbf u_j$ to a finite set.  Passing to a subsequence, we may assume
that $\mathbf u_j=\mathbf u$ is constant.  Closedness gives
$(\mathbf x-\mathbf u,0)\in K'$.
Since $\mathbf x$ was arbitrary, $K'_0+\mathbb Z^3=\mathbb R^3$.
The same argument with $\lambda_j\uparrow1$ gives
$K'_1+\mathbb Z^3=\mathbb R^3$.

Now $K'_0\subseteq\tilde T_m$ and $-K'_1\subseteq\tilde T_m$, and
both sets cover $\mathbb R^3$ by $\mathbb Z^3$ because
$-\mathbb Z^3=\mathbb Z^3$.
These compact convex covering sets have nonempty interior: otherwise
their lattice translates would lie in countably many proper affine
subspaces and could not cover $\mathbb R^3$.
Proposition~\ref{prop:reeve_mb} therefore gives
\[
 K'_0=\tilde T_m,\qquad K'_1=-\tilde T_m.
\]
Thus $K'$ contains both defining faces of $\tilde T_m^\bullet$.
By convexity,
\[
 \tilde T_m^\bullet
 =\conv\bigl((\tilde T_m\times\{0\})
             \cup(-\tilde T_m\times\{1\})\bigr)
 \subseteq K'\subseteq\tilde T_m^\bullet.
\]
Finally, every integer point of $\tilde T_m^\bullet$ has fourth
coordinate zero or one and hence lies in one of its two defining faces.
The vertices of these faces together form the vertex set of
$\tilde T_m^\bullet$.  By \eqref{eq:reeve-integral-points},
\begin{equation}\label{eq:four-dimensional-integral-points}
 \tilde T_m^\bullet\cap\mathbb Z^4
 =\operatorname{vert}(\tilde T_m^\bullet)
 =\{(\vv_i,0),(-\vv_i,1):1\le i\le8\}.
\end{equation}
\end{proof}

\begin{proof}[Proof of Theorem~\ref{thm:infinite_sym}]
By Proposition~\ref{prop:four-dimensional_mb}, $\tilde T_m^\bullet$
is a centrally symmetric minimal covering body for $\mathbb Z^4$.
It remains to distinguish the arithmetic contact types.
Using the original Reeve tetrahedron $T_m$, put
\[
 A_m=\conv\bigl((T_m\times\{0\})
             \cup((-T_m-\ve_3)\times\{1\})\bigr).
\]
Adding a unit segment in the third coordinate direction extends the
two tetrahedra to the defining layers of $\tilde T_m^\bullet$, since
$\tilde T_m=T_m+[\vnull,\ve_3]$.  Thus
\begin{equation}\label{eq:four-dimensional-unit-sum}
 \tilde T_m^\bullet=A_m+[\vnull,\mathbf E],
 \qquad \mathbf E=(0,0,1,0).
\end{equation}

After translating $A_m$ by $(0,0,1/2,-1/2)$, its eight defining points
are the four columns of the matrix below and their negatives:
\[
 \left|\det\begin{pmatrix}
 0&1&0&1\\
 0&0&1&1\\
 \tfrac12&\tfrac12&\tfrac12&m+\tfrac12\\
 -\tfrac12&-\tfrac12&-\tfrac12&-\tfrac12
 \end{pmatrix}\right|=\frac m2.
\]
These columns are linearly independent.  The translated body is
therefore a linear image of the standard four-dimensional crosspolytope.
Joining its facets to the origin partitions it into $2^4$ simplices,
each with volume $(m/2)/4!$.  Hence
\[
 \vol_4(A_m)=\frac{2^4}{4!}\cdot\frac m2=\frac m3.
\]
The projection $\pi$ in \eqref{eq:four-dimensional-base} annihilates
$\mathbf E$, so \eqref{eq:four-dimensional-unit-sum} gives
$\pi(A_m)=\pi(\tilde T_m^\bullet)=D_4$, which has volume one.
Adding $[\vnull,\mathbf E]$ lengthens every nonempty fiber by one.
Integrating the fiber lengths gives
\begin{equation}\label{eq:four-dimensional-volume}
 \vol_4(\tilde T_m^\bullet)=\vol_4(A_m)+1=1+\frac m3.
\end{equation}

All vertices of $\tilde T_m^\bullet$ are integer points.
Consequently, at any vertex $\vv$, the root contact polytope satisfies
\[
 P_{\vv}=\conv\bigl((\tilde T_m^\bullet-\vv)\cap\mathbb Z^4\bigr)
        =\tilde T_m^\bullet-\vv:
\]
it is contained in this translated body and contains all its vertices.
An isomorphism of arithmetic contact profiles for $\mathbb Z^4$ maps
these contact polytopes by a lattice automorphism, whose determinant
has absolute value one.  It therefore preserves their four-dimensional
volumes.  By \eqref{eq:four-dimensional-volume}, these volumes are
pairwise distinct as $m$ ranges over the positive integers.
Hence the bodies have pairwise distinct arithmetic contact types.
\end{proof}

Whether these bodies contain a complete family of translated
four-dimensional Kuhn simplices is not settled by this construction.
\section{A Finite Sufficient Condition for Lattice Coverings}
\label{sec:discussion}

For a prescribed lattice basis, containment of a complete translated
Kuhn family can be tested by six intersections of translates of $K$.
Theorem~\ref{thm:kuhn_cover} makes these a sufficient condition for
covering, and minimality turns containment into a representation of
$K$. Central symmetry reduces the number of tests to three.

Let $B=(\mathbf b_1,\mathbf b_2,\mathbf b_3)$ be a basis of a full-rank
lattice $\Lambda\subset\mathbb R^3$, and put
$\mathbf s=\mathbf b_1+\mathbf b_2+\mathbf b_3$. For a convex body $K$
and $\sigma\in S_3$, define
\begin{equation}\label{eq:fixed-basis-intersections}
\begin{split}
 X_\sigma(K;B)={}&K\cap(K-\mathbf b_{\sigma(1)})\\
 &{}\cap(K-\mathbf b_{\sigma(1)}-\mathbf b_{\sigma(2)})
       \cap(K-\mathbf s).
\end{split}
\end{equation}

\begin{proposition}\label{conj:intersection}
A translate of $\Delta_\sigma(B)$ is contained in $K$ if and only if
$X_\sigma(K;B)\ne\varnothing$. If these intersections are nonempty for
all $\sigma\in S_3$, then $K+\Lambda=\mathbb R^3$.
If all six intersections are nonempty and $K$ is minimal for
$\Lambda$, then for any choices
$\mathbf t_\sigma\in X_\sigma(K;B)$,
\[
 K=\conv\bigcup_{\sigma\in S_3}
       \bigl(\mathbf t_\sigma+\Delta_\sigma(B)\bigr).
\]
\end{proposition}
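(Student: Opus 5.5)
The plan is to prove the three assertions in order, each reducing to a direct computation or to an earlier result.

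\emph{First assertion.} Write the vertices of $\Delta_\sigma(B)$ as $\mathbf s_0=\vnull$, $\mathbf s_1=\mathbf b_{\sigma(1)}$, $\mathbf s_2=\mathbf b_{\sigma(1)}+\mathbf b_{\sigma(2)}$ and $\mathbf s_3=\mathbf s$.
\begin{itemize}
\item By convexity of $K$, a translate $\mathbf t+\Delta_\sigma(B)$ lies in $K$ exactly when its four vertices do, that is, when $\mathbf t+\mathbf s_k\in K$ for $k=0,1,2,3$.
\item The condition $\mathbf t+\mathbf s_k\in K$ is the same as $\mathbf t\in K-\mathbf s_k$.
\item Hence $\mathbf t+\Delta_\sigma(B)\subset K$ if and only if $\mathbf t\in X_\sigma(K;B)$, by \eqref{eq:fixed-basis-intersections}.
\end{itemize}
This gives the equivalence. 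It also identifies $X_\sigma(K;B)$ as exactly the set of admissible translation vectors, which I use below.

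\emph{Second assertion.} Assume all six intersections are nonempty and pick $\mathbf t_\sigma\in X_\sigma(K;B)$. Then $K\supseteq K_0:=\conv\bigcup_\sigma(\mathbf t_\sigma+\Delta_\sigma(B))$. Theorem~\ref{thm:kuhn_cover} is stated for $\mathbb Z^3$ and the standard simplices, so I transfer it with the linear map $A$ sending $\ve_i$ to $\mathbf b_i$.
\begin{itemize}
\item $A$ sends $\mathbb Z^3$ onto $\Lambda$.
\item $A$ sends the standard Kuhn simplex of $\sigma$ to $\Delta_\sigma(B)$, since the vertex sequences $\mathbf 0,\ve_{\sigma(1)},\dots$ correspond.
\item The six simplices of \eqref{eq:kuhn-six-simplices} are exactly the six $\Delta_\sigma$ of Remark~\ref{rem:kuhn-all-dimensions} for $n=3$.
\end{itemize}
I will check this last correspondence by listing, for each $\Delta_i$, its vertex chain $\va\to\cdot\to\cdot\to\vg$; each chain adds one coordinate vector at a time. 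Applying Theorem~\ref{thm:kuhn_cover} to $A^{-1}K_0$ with translates $A^{-1}\mathbf t_\sigma$ gives $A^{-1}K_0+\mathbb Z^3=\mathbb R^3$. Applying $A$ gives $K_0+\Lambda=\mathbb R^3$, and therefore $K+\Lambda=\mathbb R^3$.

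\emph{Third assertion.} Under minimality, $K_0$ is a convex subbody of $K$ that satisfies $K_0+\Lambda=\mathbb R^3$. It has nonempty interior, since each $\Delta_\sigma(B)$ is a nondegenerate tetrahedron. By the definition of a minimal covering body, $K_0$ cannot be a proper subbody, so $K_0=K$.

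The only step needing care is the bookkeeping that matches the labelled simplices $\Delta_1,\dots,\Delta_6$ with the permutation simplices, and checking that the independent translations allowed in Theorem~\ref{thm:kuhn_cover} are compatible with arbitrary choices of $\mathbf t_\sigma$. Both are routine, so I expect no genuine obstacle.
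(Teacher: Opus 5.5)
Your proposal is correct and follows the paper's proof essentially step for step. Convexity reduces containment to the four vertices and identifies $X_\sigma(K;B)$ as the set of admissible translations; the linear map sending the standard basis to $B$ transfers Theorem~\ref{thm:kuhn_cover} to $\Lambda$; and minimality forces the covering hull to equal $K$. Your extra checks, matching $\Delta_1,\ldots,\Delta_6$ with the six permutations and noting that the hull has nonempty interior, are details the paper leaves implicit.
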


\begin{proof}
The condition $\mathbf t+\Delta_\sigma(B)\subset K$ is equivalent,
by convexity, to the containment of its four vertices in $K$. These
four containments say exactly that $\mathbf t\in X_\sigma(K;B)$.
Theorem~\ref{thm:kuhn_cover}, after the linear change of coordinates
sending the standard basis to $B$, shows that the convex hull of the
six chosen translates covers with $\Lambda$. This hull is contained
in $K$, and minimality forces equality when $K$ is minimal.
\end{proof}

\begin{corollary}\label{cor:symmetric-three-tests}
Let $K\subset\mathbb R^3$ be a centrally symmetric convex body. If
\[
 X_{123}(K;B)\ne\varnothing,\qquad
 X_{132}(K;B)\ne\varnothing,\qquad
 X_{213}(K;B)\ne\varnothing,
\]
then $K+\Lambda=\mathbb R^3$.
\end{corollary}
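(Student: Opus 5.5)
The plan is to show that central symmetry of $K$ produces, from a translate of $\Delta_\sigma(B)$ inside $K$, a translate of a second Kuhn simplex inside $K$. We then use Proposition~\ref{conj:intersection}: once all six $X_\sigma(K;B)$ are nonempty, $K+\Lambda=\mathbb R^3$.

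\emph{Reflection step.} Let $K$ be symmetric about $\mathbf c$, so $K=2\mathbf c-K$. If $\mathbf t+\Delta_\sigma(B)\subset K$, then $2\mathbf c-\mathbf t-\Delta_\sigma(B)\subset K$. The vertices of $-\Delta_\sigma(B)$ are
\[
\mathbf0,\quad -\mathbf b_{\sigma(1)},\quad -\mathbf b_{\sigma(1)}-\mathbf b_{\sigma(2)},\quad -\mathbf s.
\]
After adding $\mathbf s$ these become
\[
\mathbf s,\quad \mathbf b_{\sigma(2)}+\mathbf b_{\sigma(3)},\quad \mathbf b_{\sigma(3)},\quad \mathbf0.
\]
This is exactly $\Delta_{\sigma'}(B)$ with $\sigma'=(\sigma(3),\sigma(2),\sigma(1))$, the reversed permutation. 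Hence
\[
-\Delta_\sigma(B)=\Delta_{\sigma'}(B)-\mathbf s,
\]
and so
\[
2\mathbf c-\mathbf t-\mathbf s+\Delta_{\sigma'}(B)\subset K.
\]
Equivalently, $X_\sigma(K;B)\neq\varnothing$ implies $X_{\sigma'}(K;B)\neq\varnothing$. One can check this directly from \eqref{eq:fixed-basis-intersections}: the map $\mathbf x\mapsto 2\mathbf c-\mathbf s-\mathbf x$ sends $X_\sigma$ onto $X_{\sigma'}$.

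\emph{Orbit check.} Reversal pairs the six permutations into three pairs:
\[
123\leftrightarrow321,\qquad 132\leftrightarrow231,\qquad 213\leftrightarrow312.
\]
The three hypotheses $X_{123},X_{132},X_{213}\neq\varnothing$ cover one member of each pair. So all six intersections are nonempty, and Proposition~\ref{conj:intersection}, which rests on Theorem~\ref{thm:kuhn_cover}, gives the covering.

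The only step needing care is the vertex identification of $-\Delta_\sigma(B)+\mathbf s$ with $\Delta_{\sigma'}(B)$, in particular making sure the partial sums come in the reversed order. This is a short bookkeeping check rather than a real obstacle.
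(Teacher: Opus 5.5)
Your proposal is correct and matches the paper's proof: central symmetry plus the identity $\Delta_{\sigma^{\mathrm{rev}}}(B)=\mathbf s-\Delta_\sigma(B)$ carries nonemptiness of each tested $X_\sigma(K;B)$ to its reversal partner, so all six are nonempty and Proposition~\ref{conj:intersection} applies. The only cosmetic difference is that you keep a general center $\mathbf c$ and write out the involution $\mathbf x\mapsto 2\mathbf c-\mathbf s-\mathbf x$ from $X_\sigma$ to $X_{\sigma'}$, whereas the paper first translates so that $K=-K$.
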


\begin{proof}
Nonemptiness of the intersections and the covering conclusion are
unchanged by translating $K$. We may therefore assume $K=-K$.
For $\sigma=(i,j,k)$, let $\sigma^{\mathrm{rev}}=(k,j,i)$.
The vertex sets give
\[
 \Delta_{\sigma^{\mathrm{rev}}}(B)
       =\mathbf s-\Delta_\sigma(B).
\]
Consequently, if $\mathbf t+\Delta_\sigma(B)\subset K=-K$, then
\[
 -\mathbf t-\mathbf s+\Delta_{\sigma^{\mathrm{rev}}}(B)\subset K.
\]
The three permutations in the statement represent the three reversal
pairs in $S_3$. Apply Proposition~\ref{conj:intersection}.
\end{proof}

For a polytope
\[
 K=\{\mathbf y\in\mathbb R^3:
       \mathbf a_r\cdot\mathbf y\le\beta_r, 1\le r\le N\},
\]
nonemptiness of $X_\sigma(K;B)$ is the feasibility of the finite system
\[
 \mathbf a_r\cdot(\mathbf t+\mathbf v)\le\beta_r
 \quad\bigl(1\le r\le N,
       \ \mathbf v\in\operatorname{Vert}\Delta_\sigma(B)\bigr).
\]
Thus each test is a linear feasibility problem with three unknowns.
For $K=-K$, the three tests are equivalent to
containment of a complete translated Kuhn family for this basis.
Problem~\ref{conj:sym_classification} asks which minimal covering
bodies admit a basis for which these tests succeed. The criterion
itself concerns a prescribed basis and supplies no bound on the
bases needed in such a search.

\appendix

\section{Proof of the Face-Pairing Lemma}
\label{app:face-pairing}

We prove Lemma~\ref{lem:periodic-face-pairing} by counting covering
tetrahedra modulo two and using continuity at degenerate parameters.

\begin{proof}[Proof of Lemma~\ref{lem:periodic-face-pairing}]
Here a labelled triangle is obtained by omitting one of the four
vertex labels; these labels are retained even when points coincide.
A tetrahedron can become degenerate only when its determinant vanishes.
Since each determinant is a nonzero polynomial in $t$, there are only
finitely many exceptional parameters at which any tetrahedron is
degenerate. All the tetrahedra for $0\le t\le1$ lie in a common bounded
set, so only finitely many of their lattice translates can meet a
fixed bounded set, uniformly in $t$.

Fix a nonexceptional parameter $t$. For a point $\mathbf y$ outside all
translated triangular faces, let $n_t(\mathbf y)$ be the number of pairs
$(j,\mathbf z)\in\{1,\ldots,N\}\times\mathbb Z^3$ for which
$\mathbf y\in\operatorname{int}(Q_j(t)+\mathbf z)$.
This number is finite by local finiteness. If paired labelled faces
satisfy $F=F'+\mathbf z_0$, then $F+\mathbf z$ is paired with
$F'+(\mathbf z+\mathbf z_0)$ for every $\mathbf z\in\mathbb Z^3$.
Thus each translated labelled face incidence belongs to exactly one
pair of coincident triangles. Along a polygonal path crossing faces
transversely and avoiding edges and intersections of noncoplanar faces,
each pair changes the count by $0$ when the two tetrahedra lie on
opposite sides, and by $\pm2$ when they lie on the same side.
If several coplanar pairs overlap at a crossing, their contributions
are added. Consequently the parity of $n_t(\mathbf y)$ is independent
of $\mathbf y$ away from the faces.

To compare different parameters, fix $t_0\in[0,1]$. Choose a point
$\mathbf y$ outside all translated triangular faces at $t_0$ and
all degenerate sets $Q_j(t_0)+\mathbf z$. Such a point exists because
these sets have dimension at most two and are locally finite.
The point may be chosen separately for each $t_0$, since at each
nonexceptional parameter the parity is independent of the spatial
point. Only finitely many lattice translates can meet a fixed
neighbourhood of $\mathbf y$, uniformly in $t$. For those finitely
many translates, membership of $\mathbf y$ is unchanged when $t$ is
sufficiently close to $t_0$: an interior point remains interior,
and a point outside a closed tetrahedron remains outside by positive
distance and continuity. In particular, degenerate tetrahedra at
$t_0$ remain disjoint from $\mathbf y$ near $t_0$. Hence the common
parity for nonexceptional parameters is locally constant and agrees
on both sides of every interior exceptional parameter.

At $t_0=0$, choose the point also from the full-measure set covered
exactly once by the nondegenerate initial tetrahedra and their lattice
translates. Its count is then 1 and remains 1 at all sufficiently close
nonexceptional parameters. Since there are only finitely many
exceptional parameters, it follows that
\begin{equation}\label{eq:kuhn-constant-parity}
                 n_t(\mathbf y)\equiv1\pmod2
\end{equation}
for every nonexceptional $t$ and every $\mathbf y$ outside the faces.

Thus every point outside the faces is covered at each nonexceptional
parameter. The union is locally finite and closed, so it covers all
of $\mathbb R^3$. At an exceptional parameter, take a sequence of
nonexceptional parameters converging to it. For a fixed point,
boundedness leaves only finitely many possible pairs $(j,\mathbf z)$;
a constant pair along a subsequence and continuity of the vertices
give coverage in the limit.
\end{proof}

\section*{Acknowledgements}

The authors thank Alexey Balitskiy for helpful discussions and for
sharing his topological argument for the higher-dimensional covering
criterion. They thank Mei Han for bringing the work of Giulia
Codenotti, Ansgar Freyer, and Katarina Krivoku\'ca to their attention,
and these three authors for constructive correspondence.

The new definitions, main results, conjectures, problems and overall proof framework were
developed by the authors. GPT was used only for language editing and
improvements to the proofs. The authors take full responsibility for
the content of the paper.

\end{document}